\newcommand{\C}{\mathbf{C}}
\newcommand{\Z}{\mathbf{Z}}
\newcommand{\Q}{\mathbf{Q}}
\newcommand{\sma}{\wedge}
\newcommand{\im}{\operatorname{Im}}
\newcommand{\id}{\operatorname{Id}}
\newcommand{\Spec}{\operatorname{Spec}}
\newcommand{\mc}{\mathcal}
\newcommand{\mbf}{\mathbf}
\newtheorem{thm}{Theorem}[section]
\newtheorem{lem}[thm]{Lemma}
\newtheorem{cor}[thm]{Corollary}
\newtheorem{prop}[thm]{Proposition}
\newtheorem{conj}[thm]{Conjecture}
\theoremstyle{definition}
\newtheorem{defn}[thm]{Definition}
\newtheorem{example}[thm]{Example}
\newtheorem{rmk}[thm]{Remark}
\newtheorem{nota}[thm]{Notation}
\begin{document}

\title{The $K$-Theory Spectrum of Varieties}
\author{Jonathan A. Campbell}
\email{j.campbell@math.vanderbilt.edu}
\address{\newline Vanderbilt University \newline Department of Mathematics \newline 1326 Stevenson Center \newline Nashville, TN 37240}
\keywords{Grothendick ring of varieties, K-theory, $S_\bullet$-construction, Motivic Measure}

\begin{abstract}
We produce an $E_\infty$-ring spectrum $K(\mbf{Var}_{/k})$ whose components model the Grothendieck ring of varieties (over a field $k$) $K_0 (\mbf{Var}_{/k})$. This is acheived by slightly modifying Waldhausen categories and the Waldhausen $S_\bullet$-construction.  As an application, we produce liftings of various motivic measures to spectrum-level maps, including maps into Waldhausen's $K$-theory of spaces $A(\ast)$ and to $K(\Q)$. 
  \end{abstract}

\maketitle

\section{Introduction}

Let $k$ be a field. The Grothendieck ring of varieties $K_0(\mbf{Var}_{/k})$ is as a group defined to have generators the isomorphism classes $[X]$ where $X$ is a variety over $K$, and relations $[X-Y] + [Y] = [X]$ where $Y \hookrightarrow X$ is a closed inclusion. The multiplication is induced by Cartesian product of varieties. This ring is a fundamental object of study for algebraic geometers:  it is a universal home for Euler characteristics of varieties, called motivic measures,  as well as an easy version of motives. It has further deep ties to stable birational geometry, and a number of interesting statements in that field can be phrased in terms of the structure of $K_0 (\mbf{Var}_{/k})$ (see, e.g. \cite{liu_sebag, larsen_lunts}).  The Grothendieck ring of varieties also arises as the target for ``motivic integration'' \cite{looijenga}, a technique invented by Kontsevich for producing rational invariants of Calabi--Yau varieties. In his setup, the target for such an integral is a ring closely related to $K_0 (\mbf{Var}_{/k})$. In general, any ring homomorphism $K_0 (\mbf{Var}_{/k}) \to A$ can be used as a measure for motivic integration, hence the term motivic measure. 

The construction of motivic measures is a powerful technique for understanding the structure of $K_0 (\mbf{Var}_{/k})$, and a number of authors have constructed interesting ones. For example, in \cite{larsen_lunts} the authors construct a motivic measure $K_0 (\mbf{Var}_{/k}) \to \mbf{Z}[SB]$ where the latter denotes the free group ring on stable birational classes of varieties. Furthermore, they show that the kernel of that ring map is the ideal generated by the class of the affine line $[\mbf{A}^1_k]$. The same motivic measure was also used in \cite{larsen_lunts} to prove the irrationality of a certain motivic zeta function. Another slightly more exotic motivic measure was produced in \cite{bondal_larsen_lunts}: a ring map $K_0 (\mbf{Var}_{/k}) \to K_0 (\mbf{PT})$ where $\mbf{PT}$ is the category of small pre-triangulated categories.

Given a Grothendieck ring $K_0$, topologists and algebraic $K$-theorists have come to expect concomitant higher $K$-groups, $K_i$, that arise as homotopy groups of spaces or spectra. This is the case with the algebraic $K$-theory of rings \cite{bass,milnor,quillen}, and the algebraic $K$-theory of topological spaces \cite{waldhausen}. In the case of $K_0 (\mbf{Var}_{/k})$  Zakharevich \cite{zakharevich} showed, using her formalism of assemblers, there is indeed an underlying spectrum and used the result to prove a number of results relating to cut-paste conjectures. We will call the spectrum she defined $K(\mbf{Var}_{/k})$.

It is interesting to study the higher homotopy groups of $K(\mbf{Var}_{/k})$ and there are concrete reasons to believe the higher homotopy contains a great deal of geometric information. For example, Zakharevich \cite{zakharevich_annihilator} has used $\pi_1 K(\mbf{Var}_{/k})$ very effectively to study questions in birational geometry.  For other flavors of algebraic $K$-theory, the typical way to study higher $K$-theory is to produce maps from $K(\mbf{Var}_{/k})$ to target spectra with computable homotopy groups --- in our case such maps correspond to ``derived'' motivic measures. Unfortunately, assemblers are very difficult to define maps out of, and a different construction of $K(\mbf{Var}_{/k})$ is needed. This paper provides such a construction. We note now that it takes work to prove the equivalence of the models. The comparison will appear in future work of the author, Jesse Wolfson, and Inna Zakharevich. 

The standard way of defining higher algebraic $K$-theory begins with a category $\mc{E}$ with some notion of ``exact sequence'', for example Quillen's exact categories \cite{quillen} or triangulated categories. One then defines $K_0 (\mc{E})$ in the usual way by splitting exact sequences. Roughly, the higher $K$-groups are defined by using simplicial machinery to keep track of \textit{how} the sequences split. Waldhausen realized that in fact this type of machine works for a much less restrictive structure on the underlying category. One needs a zero object, ``cofibrations'', which are maps $X \to Y$ where one can define a quotient $Y/X$, and some mild categorical conditions on the existence of certain colimits \cite{waldhausen}. Granted this structure on a category $\mc{C}$ one can define a spectrum $K(\mc{C})$ by again using simplicial machinery to keep track of the ways in which $Y$ splits into $X$ and $Y/X$. In this case, the category together with the necessary structure is called a Waldhausen category, and the machinery is called the Waldhausen $S_\bullet$-construction.

One could hope to define a higher $K$-theory of varieties using such standard constructions. Unfortunately, there are immediate problems --- for example, $\mbf{Var}_{/k}$ cannot be a Waldhausen category since it has no zero object, nor does it have quotients or pushouts in general. However, these objections can be remedied, and we introduce a new formalism where a modified $S_\bullet$-construction can be run. The production of this modified $S_\bullet$-construction is the main point of this paper. 

First, the category $\mbf{Var}_{/k}$ has just enough pushouts: pushouts where both legs are closed inclusions exist. Also, ``quotients'' in our setting will be replaced by ``subtraction,'' $Y - X$ for closed inclusions $X \hookrightarrow Y$ of varieties -- it is these ``subtraction sequences'' $X \hookrightarrow Y \leftarrow Y - X$ that we will split. We also observe that a zero object is not actually needed, but an initial object is and the empty variety will work in this case. Proceeding in this way, we create a new formalism of SW-categories (for semi-Waldhausen or scissors-Waldhausen or subtractive-Waldhausen...) and a suitably modified Waldhausen $S_\bullet$-construction called the $\widetilde{S}_\bullet$-construction (see Section 3 for details).

For Waldhausen's $S_\bullet$-construction, the main theorem, and the theorem from which almost all $K$-theory theorems follow \cite{staffeldt} is the Additivity Theorem \cite[Thm. 1.4.2]{waldhausen}. The main theorem of this paper is the following analogue.

\begin{thm}
  Let $\mc{C}$ be an SW-category. Then the category $\mbf{Sub}(\mc{C})$ of subtraction diagrams $X \hookrightarrow Y \leftarrow Y-X$ can also be made into an SW-category. Furthermore
  \[
  \widetilde{S}_\bullet \mbf{Sub}(\mc{C}) \to \widetilde{S}_\bullet \mc{C} \times \widetilde{S}_\bullet \mc{C}
  \]
  is a weak equivalence of simplicial sets, with the map given by projecting onto to the first and last components of $X \hookrightarrow Y \leftarrow Y-X$. 
\end{thm}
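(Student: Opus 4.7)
The plan is to adapt Waldhausen's proof of the Additivity Theorem to the SW-category setting. Three main steps are needed: equipping $\mathbf{Sub}(\mc{C})$ with an SW-structure, constructing a section $s$ of the projection $p$, and establishing a simplicial homotopy $s \circ p \simeq \id$.

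First, I would define the SW-structure on $\mathbf{Sub}(\mc{C})$ componentwise. The initial object is the trivial diagram $\emptyset \hookrightarrow \emptyset \leftarrow \emptyset$. A morphism of subtraction diagrams
\[
(X \hookrightarrow Y \leftarrow Y-X) \longrightarrow (X' \hookrightarrow Y' \leftarrow Y'-X')
\]
is declared a cofibration when each component is a cofibration in $\mc{C}$ and the induced squares are pushouts along cofibrations; subtractions of diagrams are defined analogously on each component. Verifying the SW axioms then reduces to pushout-pasting lemmas in $\mc{C}$, which hold because $\mc{C}$ has the required pushouts along cofibrations by hypothesis.

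Second, since $\mc{C}$ has pushouts along cofibrations and $\emptyset$ is initial, the disjoint union $X \sqcup Z := X \sqcup_\emptyset Z$ exists in $\mc{C}$. This yields a section
\[
s \colon \widetilde{S}_\bullet \mc{C} \times \widetilde{S}_\bullet \mc{C} \longrightarrow \widetilde{S}_\bullet \mathbf{Sub}(\mc{C}), \qquad (X, Z) \mapsto (X \hookrightarrow X \sqcup Z \leftarrow Z),
\]
which is easily checked to be SW-exact and to satisfy $p \circ s = \id$.

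The main obstacle is the homotopy $s \circ p \simeq \id$. For a diagram $D = (X \hookrightarrow Y \leftarrow Y-X)$, the composite $s \circ p$ produces $(X \hookrightarrow X \sqcup (Y-X) \leftarrow Y-X)$, and the universal property of the coproduct gives a natural comparison morphism $X \sqcup (Y-X) \to Y$ that is \emph{not} in general an isomorphism. Here I would follow Waldhausen's strategy of promoting natural transformations of exact functors to simplicial homotopies after applying $\widetilde{S}_\bullet$, organized by a bisimplicial argument on $\widetilde{S}_\bullet \widetilde{S}_\bullet \mathbf{Sub}(\mc{C})$ together with the path-space / extra-degeneracy trick. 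The technical heart of the argument will be establishing the SW-analogue of Waldhausen's natural-transformation-to-homotopy lemma: namely, that when the comparison morphism is varied over the $\widetilde{S}_n$-filtration of $D$, the data assembles into a simplicial homotopy between $s \circ p$ and $\id$. This requires working entirely with subtraction sequences in place of cofibers, and verifying that the Reedy-like structure on $\widetilde{S}_n \mathbf{Sub}(\mc{C})$ interacts correctly with the SW structure. This is where the bulk of the work will lie.
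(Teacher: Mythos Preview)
Your outline diverges from the paper in two ways, one cosmetic and one substantive.

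\textbf{Cosmetic.} The paper follows McCarthy's proof rather than Waldhausen's original one: it introduces the ``mixing'' bisimplicial set $F_1^+(\mc{C}) \otimes_{\widetilde{S}_\bullet A} \mc{C}^2$, reduces the additivity map to showing certain self-maps $E_n$ are homotopy equivalences, and then constructs an explicit simplicial homotopy $h_i$ whose $C$-row is built from pushouts of the form $C_{k,l} \amalg_{A_{k,l}} (S_0 - A_{0,k})$. Your section-plus-homotopy framing is closer in spirit to Waldhausen's layout, but the hard work ends up in the same place: an explicit simplicial homotopy built from pushouts along cofibrations.

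\textbf{Substantive gap.} Your Step 3 is where the proposal is not yet a proof. The comparison map $X \sqcup (Y-X) \to Y$ is \emph{not} a weak equivalence, not a cofibration, and not a fibration in the SW-structure (in $\mbf{Var}_{/k}$ it is a bijection on points but generally not even an immersion). Hence it is not a natural transformation to which the ``natural-transformation-to-simplicial-homotopy'' lemma applies; that lemma only produces a homotopy when the transformation is through weak equivalences. Even in the classical Waldhausen setting, additivity is \emph{not} proved by invoking that lemma on $A \vee C \to B$; one must write down an explicit simplicial homotopy at the level of $S_n$, and that is exactly the content of the paper's $h_i$ construction (and of Waldhausen's Sublemma 1.4.2, or McCarthy's argument). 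Your ``vary the comparison over the $\widetilde{S}_n$-filtration'' is gesturing at the right object, but you will need to actually write the $(m+1)$-simplex $h_i(e)$ down and check it lies in $\widetilde{S}_{m+1}$; in the SW-setting this requires the ``subtraction and pushouts'' axiom (Defn.~3.11(3)) at a crucial point, and it is not automatic.

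A smaller correction: your SW-structure on $\mathbf{Sub}(\mc{C})$ should require the squares in a morphism to be \emph{cartesian}, not pushouts. Subtraction is only functorial along cartesian squares (this is built into the axioms of a category with subtraction), so without that condition $q$ would not even be a functor, let alone exact.
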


With some work, this implies the following theorem.

\begin{thm}
For $\mc{C}$ an SW-category, $K(\mc{C})$ is an infinite loop space. 
\end{thm}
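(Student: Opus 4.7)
The plan is to produce a spectrum whose zeroth space is (weakly equivalent to) $K(\mc{C})$ by iterating the $\widetilde{S}_\bullet$-construction, in direct analogy with Waldhausen's passage from the $S_\bullet$-construction to a connective $\Omega$-spectrum (\cite{waldhausen}, \S1.5). Concretely, I would set
\[
K(\mc{C})_n := |\widetilde{S}_\bullet^{(n)} \mc{C}|,\qquad n \geq 1,
\]
and exhibit weak equivalences $K(\mc{C})_n \weakequiv \Omega K(\mc{C})_{n+1}$ for all $n \geq 1$, so that $K(\mc{C}) := \Omega K(\mc{C})_1$ is an infinite loop space with these explicit deloopings.

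The first step is to verify that iteration of $\widetilde{S}_\bullet$ actually makes sense. For each $n$, I would equip $\widetilde{S}_n \mc{C}$ with the structure of an SW-category by declaring that a morphism of flags $X_\bullet \to Y_\bullet$ is a cofibration if each $X_i \to Y_i$ is a cofibration in $\mc{C}$ and the evident squares are pushouts along closed inclusions; subtractions in $\widetilde{S}_n \mc{C}$ are then defined levelwise, using that the SW-axioms on $\mc{C}$ guarantee the needed pushouts and subtraction diagrams along closed inclusions exist and behave functorially. This is the analogue of Waldhausen's observation that $S_n \mc{C}$ is itself a Waldhausen category; the only subtlety is bookkeeping to ensure that levelwise subtraction diagrams assemble to a subtraction diagram of flags.

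With iteration in place, I would construct the bonding maps $\sigma_n : K(\mc{C})_n \to \Omega K(\mc{C})_{n+1}$ via the standard Segal-type adjoint: the inclusion of $\widetilde{S}_\bullet^{(n)} \mc{C}$ as the $1$-simplices of $\widetilde{S}_\bullet \widetilde{S}_\bullet^{(n)} \mc{C}$, together with the fact that the simplicial space $[k] \mapsto |\widetilde{S}_k \widetilde{S}_\bullet^{(n)} \mc{C}|$ has contractible zero simplices (the empty variety is initial), produces a natural map to the loop space of its geometric realization. To show these are weak equivalences, I would invoke the Additivity Theorem stated above: the map
\[
\widetilde{S}_\bullet \mbf{Sub}(\widetilde{S}_\bullet^{(n)} \mc{C}) \weakequiv \widetilde{S}_\bullet \widetilde{S}_\bullet^{(n)} \mc{C} \times \widetilde{S}_\bullet \widetilde{S}_\bullet^{(n)} \mc{C}
\]
together with the standard real/imaginary part trick shows that $[k] \mapsto |\widetilde{S}_k \widetilde{S}_\bullet^{(n)} \mc{C}|$ is a special $\Gamma$-like simplicial space, so that the canonical map from its $1$-simplices to the loop space of its realization is a weak equivalence. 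This is exactly Waldhausen's derivation of the $\Omega$-spectrum property from additivity, and it transports to the SW-setting provided the additivity theorem is applied to $\widetilde{S}_\bullet^{(n)} \mc{C}$ rather than just to $\mc{C}$.

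The main obstacle I expect is the step of endowing $\widetilde{S}_n \mc{C}$ with a genuine SW-category structure and checking that the Additivity Theorem applies to it --- in particular, that pushouts along cofibrations of flags exist and that subtraction of a flag-cofibration $X_\bullet \cofib Y_\bullet$ yields the expected flag. Once this levelwise SW-structure is in hand, the Waldhausen template applies more or less mechanically, and the infinite loop space structure falls out of repeated application of the Additivity Theorem.
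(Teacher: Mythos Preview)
Your proposal is correct and follows essentially the same strategy as the paper: verify that each $\widetilde{S}_n\mc{C}$ is again an SW-category so that the construction iterates, and then use the Additivity Theorem to show that the bonding maps $|w_\bullet \widetilde{S}_\bullet^{(n)}\mc{C}| \to \Omega|w_\bullet \widetilde{S}_\bullet^{(n+1)}\mc{C}|$ are weak equivalences. The paper carries out the second step slightly differently: rather than phrasing things in terms of a Segal-type condition, it transports Waldhausen's relative construction $\widetilde{S}_\bullet(f:\mc{A}\to\mc{B})$ and his Proposition~1.5.5 to the SW-setting, obtaining a homotopy fibration sequence $i\widetilde{S}_\bullet\mc{C} \to P(i\widetilde{S}_\bullet\widetilde{S}_\bullet\mc{C}) \to i\widetilde{S}_\bullet\widetilde{S}_\bullet\mc{C}$ with contractible middle term, from which the delooping is immediate. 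This is the same underlying content packaged differently, and either route works. One small point: in the paper the cofibrations in $\widetilde{S}_n\mc{C}$ are simply the levelwise cofibrations (morphisms of flags already being required to have cartesian squares along the top row), not morphisms with an additional pushout condition as you suggest; you should check your formulation against the SW-axioms to make sure the subtractive structure goes through.
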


Since $\mbf{Var}_{/k}$ is an SW-category, we obtain a spectrum $K(\mbf{Var}_{/k})$. 

\begin{prop}
  The components of the spectrum $K(\mbf{Var}_{/k})$ coincide with the Grothendieck group of varieties:
  \[
  \pi_0 K(\mbf{Var}_{/k}) = K_0 (\mbf{Var}_{/k}).
  \]
\end{prop}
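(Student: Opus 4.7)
The strategy is to mimic Waldhausen's standard calculation of $\pi_0$ of the $K$-theory of a Waldhausen category, but using subtraction sequences in place of cofiber sequences. Since the previous theorem says $K(\mbf{Var}_{/k})$ is an infinite loop space built from $\widetilde{S}_\bullet \mbf{Var}_{/k}$, we have
\[
\pi_0 K(\mbf{Var}_{/k}) \;=\; \pi_1 |\widetilde{S}_\bullet \mbf{Var}_{/k}|,
\]
which is automatically abelian as the $\pi_0$ of a loop space. It therefore suffices to read off a presentation of this fundamental group from the low-dimensional simplices of $\widetilde{S}_\bullet \mbf{Var}_{/k}$.

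First I would unwind the $0$-, $1$-, and $2$-simplex structure. By construction $\widetilde{S}_0 \mbf{Var}_{/k}$ is a point (the empty variety $\emptyset$), $\widetilde{S}_1 \mbf{Var}_{/k}$ is equivalent to the category $\mbf{Var}_{/k}$ itself, and $\widetilde{S}_2 \mbf{Var}_{/k}$ is the category $\mbf{Sub}(\mbf{Var}_{/k})$ of subtraction diagrams $X \hookrightarrow Y \leftarrow Y - X$. After passing to the nerve and realizing, the $1$-cells of $|\widetilde{S}_\bullet \mbf{Var}_{/k}|$ are indexed by isomorphism classes $[X]$ of varieties, all with both endpoints at the basepoint $\emptyset$, and each $2$-simplex coming from a subtraction sequence has boundary $[X]$, $[Y]$, $[Y-X]$ along its three faces.

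Then I would compute $\pi_1$ from the $2$-skeleton by the standard simplicial van Kampen presentation: there is one generator $[X]$ for every isomorphism class of variety and one relation $[X] \cdot [Y-X] = [Y]$ for each closed inclusion $X \hookrightarrow Y$. Rewriting multiplicatively as addition (using abelianness) gives exactly the generators and relations defining $K_0(\mbf{Var}_{/k})$. Finally, one checks that the multiplicative structure induced by the $E_\infty$-ring structure on $K(\mbf{Var}_{/k})$ agrees on $\pi_0$ with Cartesian product of varieties, since the pairing on $\widetilde{S}_\bullet$ is built out of $\times_k$; this matches the ring structure on $K_0(\mbf{Var}_{/k})$.

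The main technical obstacle is that $\widetilde{S}_n \mbf{Var}_{/k}$ is a category rather than a set, so the realization really comes from a bisimplicial object (nerve in the categorical direction, simplicial in the $S_\bullet$-direction). One must verify that the morphisms (isomorphisms of diagrams) in $\widetilde{S}_1$ do nothing more than identify isomorphic varieties, and that morphisms in $\widetilde{S}_2$ introduce no further relations beyond those recorded by the subtraction sequences themselves. Both points are routine once one has set up the simplicial presentation of $\pi_1$ carefully, and the essential content is the design of $\widetilde{S}_\bullet$ so that its $2$-simplices are precisely the subtraction sequences defining $K_0(\mbf{Var}_{/k})$.
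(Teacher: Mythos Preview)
Your proposal is correct and follows essentially the same route as the paper: compute $\pi_0 K(\mbf{Var}_{/k}) = \pi_1 |w_\bullet \widetilde{S}_\bullet \mbf{Var}_{/k}|$ via the standard generators-and-relations presentation for $\pi_1$ of a (bi)simplicial space, reading off generators from isomorphism classes in $\widetilde{S}_1$ and relations $[Y]=[X]+[Y-X]$ from the face maps on $\widetilde{S}_2$. Your discussion of the ring structure goes slightly beyond this proposition and anticipates the subsequent theorem, but the group-level argument matches the paper's.
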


The category $\mbf{Var}_{/k}$ is endowed with a product, and we can descend this product to the spectrum level giving us an even stronger statement:

\begin{thm}
The cartesian product on varieties gives $K(\mbf{Var}_{/k})$ the structure of an $E_\infty$-ring spectrum. Furthermore, $\pi_0 K(\mbf{Var}_{/k})$ coincides with the Grothendieck ring of varieties. 
\end{thm}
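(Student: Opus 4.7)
The plan is to promote the cartesian product on $\mbf{Var}_{/k}$ to a symmetric monoidal structure compatible with the SW-structure in each variable, and then invoke a multicategorical machine producing $E_\infty$-ring spectra from such data, in parallel with the Elmendorf--Mandell approach for symmetric monoidal Waldhausen categories.

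The first step is to verify that $\times$ is \emph{biexact} in the SW-sense: for any variety $Z$, the functor $- \times Z : \mbf{Var}_{/k} \to \mbf{Var}_{/k}$ sends closed inclusions to closed inclusions, preserves pushouts of closed inclusions, sends the initial object $\emptyset$ to itself, and commutes with subtraction in the sense $(Y - X) \times Z = (Y \times Z) - (X \times Z)$. All four checks are elementary consequences of the definition of the Zariski topology on a product. The symmetry and associativity coherences for $\times$ are inherited from the cartesian structure on schemes, so $\mbf{Var}_{/k}$ becomes a symmetric monoidal SW-category.

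Next, I would show that the $\widetilde{S}_\bullet$-construction sends a biexact pairing of SW-categories $\mc{C} \times \mc{D} \to \mc{E}$ to a bilinear map $\widetilde{S}_\bullet \mc{C} \wedge \widetilde{S}_\bullet \mc{D} \to \widetilde{S}_\bullet \widetilde{S}_\bullet \mc{E}$ of pointed simplicial sets, defined by applying the pairing entrywise to sequences of subtraction diagrams $X_i \hookrightarrow Y_j \leftarrow Y_j - X_i$. Iterating this for the $n$-fold product of $\mbf{Var}_{/k}$ with itself, together with the symmetric monoidal coherences of $\times$ and the Additivity Theorem (Theorem 1.1), gives precisely the data needed by the Elmendorf--Mandell (or Blumberg--Mandell) machine to output an $E_\infty$-ring spectrum. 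The identification of $\pi_0$ as a ring is then immediate: the multiplication on $\pi_0 K(\mbf{Var}_{/k})$ induced by the $E_\infty$-structure is by construction the one coming from the symmetric monoidal pairing, sending $([X], [Y]) \mapsto [X \times Y]$, which agrees with the ring structure on $K_0(\mbf{Var}_{/k})$ (whose group structure we have already identified).

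The main obstacle is structural rather than geometric: one must verify that the Elmendorf--Mandell machinery goes through verbatim in the SW-setting, i.e.\ that the symmetric-monoidal operadic coherences between iterated $\widetilde{S}_\bullet$'s, with their permutation actions, hold in the modified framework. The required ingredients --- biexactness of $\times$, the symmetric monoidal coherences on the input, and Additivity --- are all in place, so the obstacle is really bookkeeping; but it is the step that has to be done most carefully, since it is where the passage from an associative ring spectrum to a genuine $E_\infty$-ring spectrum actually happens.
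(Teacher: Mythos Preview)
Your approach is essentially the paper's: verify that $\times$ is biexact on the SW-category $\mbf{Var}_{/k}$, then invoke the Blumberg--Mandell/Geisser--Hesselholt machinery (which extends Elmendorf--Mandell to this setting) to upgrade the iterated $\widetilde{S}_\bullet$-spectrum to an $E_\infty$-ring spectrum. The paper also rigidifies the symmetric monoidal structure to a permutative one via Isbell, which you should mention but which is routine.

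There is one genuine omission in your biexactness check. The conditions you list---$(-)\times Z$ preserves $\emptyset$, closed inclusions, pushouts, and subtraction---amount to exactness in each variable separately. But biexactness (Definition~\ref{bi_exact} in the paper) requires a further \emph{joint} condition: for cofibrations $X \hookrightarrow X'$ and $Y \hookrightarrow Y'$, the pushout-product map
\[
X' \times Y \amalg_{X \times Y} X \times Y' \longrightarrow X' \times Y'
\]
must be a cofibration. This is precisely what is needed so that applying $\times$ entrywise to a pair of flags produces a valid object of the iterated $\widetilde{S}^{(2)}_\bullet$-construction (the cartesian-square condition in Definition~\ref{iterated_S_dot_construction}). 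The paper isolates this as the one nontrivial verification and dispatches it by observing that the square with corners $X\times Y$, $X'\times Y$, $X\times Y'$, $X'\times Y'$ is cartesian, whence the pushout-product axiom for subtractive categories applies. Without this step your bilinear pairing $\widetilde{S}_\bullet \mc{C} \wedge \widetilde{S}_\bullet \mc{D} \to \widetilde{S}_\bullet\widetilde{S}_\bullet \mc{E}$ is not well-defined. A minor side note: Additivity is used for delooping, not for the multiplicative structure itself, so it is not an input to the Elmendorf--Mandell machine here.
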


Once the spectrum $K(\mbf{Var}_{/k})$ has been defined using a relative of well-studied machinery, we proceed to define maps in and out of $K(\mbf{Var}_{/k})$. These may be considered to be ``derived'' versions of motivic measures. The ability to do this is one of the main virtues of defining $K(\mbf{Var}_{/k})$ in this way. 

First, one can define a model for the unit map $S \to K(\mbf{Var}_{/k})$. Next, when $k$ is a finite field, a point-counting functor defines a map from $K(\mbf{Var}_{/k})$ to the sphere spectrum. One may also consider a complex variety as a topological space and relate this to Waldhausen's $K$-theory of spaces, $A(\ast)$ \cite{waldhausen}. Finally, one can define maps to $K(\mbf{Q})$ by using derived versions of the Euler characteristics. Summarizing, we have

\begin{thm}
  There are non-trivial spectrum maps (and explicit models for them)
  \begin{enumerate}
  \item $S \to K(\mbf{Var}_{/k})$ which on $\pi_0$  gives the map $\Z \to K_0 (\mbf{Var}_{/k})$ sending
    \[
    [n] \mapsto \underbrace{\Spec(k) \amalg \cdots \amalg \Spec(k)}_{n \ \text{times}}
    \]
  \item $K(\mbf{Var}_{/k}) \to S$, $k$ finite, which on $\pi_0$ gives the point-counting map $[X]  \mapsto \#X(k)$.
  \item $K(\mbf{Var}_{/\C}) \to A(\ast)$, which on $\pi_0$ is $[X] \mapsto \chi(X)$. 
  \item $K(\mbf{Var}_{/\C}) \to K(\operatorname{Ch}^{hb} (\Q))$ and $K(\mbf{Var}_{/\C}) \to K(\Q)$, where $\operatorname{Ch}^{hb}(\Q)$ denotes the category of homologically bounded chain complexes. On $\pi_0$ this is also $[X] \mapsto \chi(X)$. 
  \end{enumerate}
\end{thm}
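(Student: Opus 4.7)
The plan is to handle each map by exhibiting a functor of SW-categories between $\mbf{Var}_{/k}$ (or into it, in case (1)) and a target SW-category whose $\widetilde{S}_\bullet$-construction realizes the spectrum in question, then reading off the $\pi_0$ behavior from the resulting map of abelian monoids. In every case the content reduces to verifying that the functor preserves the initial object and sends subtraction sequences $X \hookrightarrow Y \leftarrow Y-X$ to subtraction sequences in the target.

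For (1), the unit map is provided abstractly by the $E_\infty$-ring structure on $K(\mbf{Var}_{/k})$. To exhibit it concretely, I equip $\mbf{FinSet}$ with the SW-structure in which closed inclusions are injections and subtraction is complementation; a Barratt--Priddy--Quillen-style argument identifies $K(\mbf{FinSet}) \simeq S$, and the coproduct $T \mapsto \coprod_T \Spec k$ is then an SW-functor $\mbf{FinSet} \to \mbf{Var}_{/k}$ inducing the unit, sending $n \mapsto n[\Spec k]$ on $\pi_0$. For (2), with $k$ finite, the $k$-point functor $X \mapsto X(k)$ defines a functor $\mbf{Var}_{/k} \to \mbf{FinSet}$; a subtraction sequence of varieties is carried to the partition $Y(k) = X(k) \sqcup (Y-X)(k)$, and $\emptyset \mapsto \emptyset$, so this is an SW-functor, and the induced map $K(\mbf{Var}_{/k}) \to K(\mbf{FinSet}) \simeq S$ realizes point-counting on $\pi_0$.

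For (3), define $\mbf{Var}_{/\C} \to \mbf{CW}^{fin}$ by $X \mapsto X(\C)$, valued in a suitable SW-model of $A(\ast)$ in which subtraction is removal of a subcomplex; since any finite collection of closed subvarieties is a subcomplex of a common triangulation (Lojasiewicz), this is compatible with subtraction diagrams. For (4), associate to $X$ the compactly-supported singular cochain complex $C^*_c(X(\C); \Q)$, an object of $\operatorname{Ch}^{hb}(\Q)$. For a subtraction sequence of varieties the excision long exact sequence in compactly-supported cohomology lifts at the chain level to a cofiber sequence, so this is an SW-functor; postcomposing with the standard equivalence $K(\operatorname{Ch}^{hb}(\Q)) \to K(\Q)$ gives the map into $K(\Q)$. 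The $\pi_0$ claims follow immediately from $\chi(X) = \sum_i (-1)^i \dim H^i_c(X(\C); \Q)$.

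The main obstacle is functoriality on \emph{all} morphisms of varieties, not just on subtraction diagrams: compactly-supported cochains are naturally functorial only along proper maps, and the underlying-topological-space construction needs to land in a category whose morphisms accommodate arbitrary morphisms of complex varieties. The SW-formalism softens this requirement --- only subtraction diagrams and the initial object must be respected by the structural data --- but producing a strict functor of underlying categories may require replacing the naive target with a variant (e.g.\ constructible sheaves or a Nori-motive-type enlargement for (4)) in which the needed functoriality holds on the nose. Once those replacements are installed, the remaining verifications --- functoriality of $\widetilde{S}_\bullet$ in SW-functors and the $\pi_0$ computations --- are routine bookkeeping.
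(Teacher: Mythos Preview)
Your outline diverges from the paper in a structural way, and for parts (3) and (4) the divergence produces a real gap.

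The paper does \emph{not} construct these maps by building exact functors of SW-categories into SW-targets. Instead it introduces a new device, the \emph{W-exact functor} (Definition~\ref{w_exact}): a pair $(F_!,F^!)$ where $F_!$ is defined only on the subcategory of cofibrations, $F^!$ only (contravariantly) on fibrations, the two agree on objects, and together they satisfy a base-change compatibility and send subtraction sequences to cofiber sequences in an honest Waldhausen target. The targets in the paper are \emph{Waldhausen} categories ($\mbf{FinSet}_+$, $\mc{R}^{hf}_{/\ast}$, chain complexes), not SW-categories, and the induced map on $K$-theory comes from a direct simplicial map $\widetilde{S}_\bullet\mc{C}\to S_\bullet\mc{W}$. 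There is also a dual notion, op-W-exact, used for the unit map.

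Your attempt to make the targets themselves SW-categories runs into trouble. For (3), the assignment $X\mapsto X(\C)$ does not land in finite CW complexes (e.g.\ $\mathbf{A}^1(\C)$), and ``removal of a subcomplex'' is not a subtraction in any SW-structure known to model $A(\ast)$. The paper's functor is instead one-point compactification $X\mapsto X(\C)^+$, which is covariant for proper maps and contravariant for open inclusions --- exactly the variance pattern encoded by a W-exact pair --- and lands in homotopy-finite pointed spaces by Proposition~\ref{sep_fin_type}. For (4) you correctly identify the functoriality obstacle for $C^*_c$, but your proposed resolution (``the SW-formalism softens this requirement'') is not correct: an exact functor of SW-categories, as defined in the paper, is still a functor on the full underlying category. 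The W-exact formalism is precisely the mechanism the paper introduces to avoid demanding that.

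For (1) and (2) your approach is closer in spirit and could be made to work: $\mbf{FinSet}$ with injections as both cofibrations and fibrations is a plausible SW-category, and $X\mapsto X(k)$ is genuinely a functor on all of $\mbf{Var}_{/k}$. But you would then owe an argument that $\widetilde{S}_\bullet\mbf{FinSet}$ realizes the sphere spectrum, i.e.\ a Barratt--Priddy--Quillen statement for the $\widetilde{S}_\bullet$-construction rather than the usual $S_\bullet$-construction. The paper sidesteps this by keeping $\mbf{FinSet}_+$ as a Waldhausen category and using the W-exact and op-W-exact machinery to pass between the two frameworks.
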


There should be maps from $K(\mbf{Var}_{/k})$ into much ``larger'' and more interesting ring spectra. As a putative example of how to produce such a map, we consider the following. Instead of discarding information by simply counting points or taking cohomology, one could instead pass to derived categories, i.e assign a variety $X$ to its derived category $\mc{D}(X)$.  Done carefully, this procedure should product a functor from varieties to stable $\infty$-categories.  This would give us a conjectural map $K(\mbf{Var}_{/k}) \to K(\mc{C}\text{at}^{Ex}_\infty)$ where $\mc{C}\text{at}^{Ex}_\infty$ is the $\infty$-category of stable $\infty$-categories \cite{blumberg_gepner_tabuada, lurie}. A more concrete manifestion of this map is the following conjecture. 

\begin{conj}
  There is a map $K(\mbf{Var}_{/k}) \to K(K(S))$ or $K(K(k))$ of $E_\infty$-ring spectra. 
\end{conj}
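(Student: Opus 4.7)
The plan is to construct the map by sending a variety $X$ to its stable $\infty$-category of perfect complexes $\mathrm{Perf}(X)$, viewed as an object of $\mc{C}\text{at}^{Ex}_\infty$ (or its $k$-linear refinement $\mc{C}\text{at}^{Ex}_k$). Under the identifications of \cite{blumberg_gepner_tabuada}, one interprets $K(\mc{C}\text{at}^{Ex}_\infty)$ and $K(\mc{C}\text{at}^{Ex}_k)$ as, respectively, spectra of the form $K(K(\mathbb{S}))$ and $K(K(k))$. Thus it is enough to produce an SW-functor $F\colon \mbf{Var}_{/k} \to \mc{C}\text{at}^{Ex}_\infty$ and then apply the functoriality of $\widetilde{S}_\bullet$ to obtain the desired $E_\infty$-ring map of spectra.

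The essential geometric input is the following. For a closed immersion $i\colon Z \hookrightarrow X$ with open complement $j\colon U \hookrightarrow X$, and assuming $Z$ is regularly embedded, pushforward and restriction fit into a Verdier localization sequence
\[
\mathrm{Perf}(Z) \xrightarrow{i_*} \mathrm{Perf}(X) \xrightarrow{j^*} \mathrm{Perf}(U).
\]
Interpreted inside $\mc{C}\text{at}^{Ex}_\infty$, this is exactly the ``subtraction sequence'' matched to the subtraction $U = X - Z$ in $\mbf{Var}_{/k}$. The first step is therefore to equip a convenient small subcategory of $\mc{C}\text{at}^{Ex}_\infty$ with an SW-structure whose cofibrations are fully faithful embeddings admitting Verdier quotients, and to verify that $F$ is a map of SW-categories. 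The $E_\infty$-ring refinement then comes for free from the external tensor product $\mathrm{Perf}(X \times Y) \simeq \mathrm{Perf}(X) \otimes \mathrm{Perf}(Y)$, which carries the Cartesian symmetric monoidal structure on $\mbf{Var}_{/k}$ to the Lurie tensor product on $\mc{C}\text{at}^{Ex}_\infty$.

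The main obstacle is that $\mathrm{Perf}$ is not well-behaved on singular varieties: the natural comparison $\mathrm{Perf}(Z) \to \mathrm{Perf}_Z(X)$ into perfect complexes with support need not be an equivalence, so the Verdier sequence above can fail to close up. The remedy is likely either to replace $\mathrm{Perf}$ by bounded coherent complexes $\mathrm{D}^b_{\mathrm{coh}}$, at the price of losing a clean monoidal structure, or to restrict to smooth varieties and invoke resolution of singularities in characteristic zero to bootstrap back. Orthogonally, because the $\widetilde{S}_\bullet$-formalism of this paper is 1-categorical while $\mc{C}\text{at}^{Ex}_\infty$ is genuinely $\infty$-categorical, one almost certainly needs an $\infty$-categorical enhancement of the $\widetilde{S}_\bullet$-construction together with a verification that the pushforward and restriction functors cohere up to higher homotopy. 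It is this coherence problem, rather than any single algebro-geometric step, that I expect to be the real difficulty; once it is settled, the identification of the target with $K(K(\mathbb{S}))$ or $K(K(k))$ is a formal consequence of \cite{blumberg_gepner_tabuada}.
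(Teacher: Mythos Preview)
This statement is a \emph{conjecture}, not a theorem; the paper does not prove it and explicitly labels it as open. There is therefore no proof in the paper to compare against, and your proposal should be read as a strategy rather than a completed argument.

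That said, your outline matches almost exactly the heuristic the paper offers in the introduction: assign $X \mapsto \mc{D}(X)$ (your $\mathrm{Perf}(X)$), land in $\mc{C}\text{at}^{Ex}_\infty$, and invoke \cite{blumberg_gepner_tabuada} to identify the target. You also correctly flag the two genuine obstructions the paper is silent about in the introduction---failure of the Verdier sequence $\mathrm{Perf}(Z)\to\mathrm{Perf}(X)\to\mathrm{Perf}(U)$ for singular $Z$, and the need for an $\infty$-categorical upgrade of $\widetilde{S}_\bullet$ to handle coherence. These are real, and neither is resolved in the paper.

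It is worth noting that in Section~6 the paper sketches a \emph{different} route to the same conjecture: rather than $\mathrm{Perf}$, use Quillen's $K'(X)=K(\mbf{Coh}(X))$ as a $W$-exact functor $\mbf{Var}_{/k}\to\mbf{Mod}_{K(S)}$. This sidesteps the regularity hypothesis (the localization sequence $K'(Z)\to K'(Y)\to K'(Y-Z)$ holds for arbitrary closed immersions) and stays within the $1$-categorical $W$-exact framework already built, but trades those advantages for a new obstacle: one must show $K'(X)$ is compact (or dualizable) as a $K(S)$- or $K(k)$-module, which the paper also leaves as a conjecture. Your $\mathrm{Perf}$ approach and the paper's $K'$ approach are thus complementary partial strategies, each blocked by a different unresolved technical point.
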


\begin{rmk}
  The conjecture above would essentially supply a lift of of the Bondal--Larsen--Lunts motivic measure $K_0 (\mbf{Var}_{/k}) \to K_0 (\mbf{PT})$. 
\end{rmk}

There are many other possible ways of producing interesting motivic measures, and this will be the subject of future work. 

\subsection{Acknowledgements}

This paper grew out of a seminar conducted with Andrew Blumberg at UT-Austin in Fall of 2014. I thank Andrew for suggesting the seminar topic, many helpful conversations about this paper, and general supportive enthusiasm for the project. At various points Sean Keel, Jen Berg, and Ben Williams have answered very naive questions about algebraic geometry.  I am indebted to Jesse Wolfson for suggesting parts of the key definition \ref{w_exact}. Finally, I thank Inna Zakharevich for interest in the current work and encouragement. As is hopefully clear from the text, she was first to define the $K(\mbf{Var}_{/k})$ spectrum, and this paper represents an alternate approach to work she has done. Inna also pointed out to me that Torsten Ekedahl was apparently thinking of an approach to $K(\mbf{Var}_{/k})$ similar to the below at the time of his death \cite{ekedhal_overflow}. Denis-Charles Cisinski made very helpful comments on an earlier draft of this paper. The comments of an anonymous referree greatly improved the structure and exposition of this paper.

\section{Scheme-Theoretic Preliminaries}

In topological contexts, the construction of $K$-theory via Waldhausen categories \cite{waldhausen}, depends heavily on having certain categorical limits and colimits. We cannot take for granted the existence of all (or any) limits and colimits in the category of varieties. However, in this section we show that all of the limits and colimits that will be necessary do, in fact, exist and we collect a number of other useful results. The author first learned this material in \cite{schwede}, but the material exists in the Stacks Project \cite[Tag 07RS]{stacks-project} as well.

\begin{defn}
In what follows a \textbf{variety} will be a finite-type, separated scheme over an arbitrary base scheme $X$. 
\end{defn}

\begin{nota}
Throughout, closed immersions in both varieties and schemes will be denoted with a hooked arrow $Z \hookrightarrow X$. Similarly, an open immersion will be denoted by $Y \xrightarrow{\circ} X$. 
\end{nota}

We will need two results. 

\begin{thm}\cite[Thm. 3.3]{schwede}, \cite[Tag 07RS]{stacks-project}
Suppose $A, B$ are rings and suppose $I \subset A$ is an ideal, and that there exists a map $f: B \to A/I$. Consider the diagram
\[
\xymatrix{
Z = \Spec A/I \ar[d] \ar[r] & X = \Spec B \\
Y = \Spec A  & 
}
\]
Then, 
\begin{enumerate}
\item The pushout $X \amalg_Z Y$ exists and is affine
\item $Y \to X \amalg_Z Y$ is a closed immersion
\item both $X \to X \amalg_Z Y$ and $Y \to X \amalg_Z Y$ are morphisms of schemes. 
\end{enumerate}
\end{thm}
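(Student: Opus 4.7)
The natural candidate for the pushout is $\Spec R$ where
\[
R := B \times_{A/I} A = \{(b, a) \in B \times A : f(b) = \pi(a)\},
\]
with $\pi : A \to A/I$ the quotient. The plan is to verify the three claims by: first checking that $R$ realizes the pushout already at the ring level, then dualizing via $\Spec$, and finally extending from affine schemes to all schemes.

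For (1), I would begin by verifying that $R$ is the fiber product in commutative rings. This is essentially by inspection: given compatible ring maps $C \to A$ and $C \to B$ that agree after projection to $A/I$, the induced set-theoretic map $C \to B \times A$ lands in $R$, and the resulting factorization is clearly unique. Applying the anti-equivalence $\Spec$, this immediately gives that $\Spec R$ is the pushout in the category of affine schemes. The substantive part is promoting this to a pushout in the category of all schemes. Given a scheme $W$ and morphisms $\phi_X : X \to W$, $\phi_Y : Y \to W$ that agree on $Z$, I would construct the unique morphism $\Spec R \to W$ by first fixing the underlying continuous map using that $|\Spec R|$ is the union of the images of $|X|$ and $|Y|$, meeting along $|Z|$; then, locally around any point of $\Spec R$, I would pick an affine open $U = \Spec C \subseteq W$ containing the image and pull back to compatible affine neighborhoods in $X$ and $Y$, where the affine pushout property applies. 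Gluing these local maps gives a well-defined morphism, and uniqueness on each affine piece gives uniqueness globally.

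For (2) and the latter half of (3), I would examine the two projections out of $R$. The second projection $R \to A$ sends $(b,a) \mapsto a$; its image is $\pi^{-1}(\im f)$, so it is surjective precisely when $f$ is surjective (the implicit standing assumption, since otherwise the closed-immersion conclusion already fails). Dually, $\Spec A \to \Spec R$ is a closed immersion, proving (2). Similarly, $R \to B$ is always surjective because $\pi$ is surjective, so $\Spec B \to \Spec R$ is also a closed immersion, completing (3).

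The main obstacle is the promotion from affine pushout to scheme-theoretic pushout in the first paragraph; ring-theoretic fiber products are trivial, but arbitrary target schemes $W$ need not be affine and the usual colimit formulas in schemes are delicate. The key technical input that makes the gluing work — and that distinguishes this situation from more general pushouts of schemes, which often fail to exist — is precisely that $Y \to \Spec R$ is a closed immersion (and so is $X \to \Spec R$), so points of $\Spec R$ have well-controlled neighborhoods coming from $X$ and $Y$. This is the Ferrand-style gluing argument, and it is where the real content of the theorem lives.
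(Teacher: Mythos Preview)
The paper does not give its own proof of this theorem; it is stated with citations to Schwede and the Stacks Project and then used as a black box. (A commented-out remark in the source even records that the pushout is $\Spec(B \times_{A/I} A)$, matching your candidate exactly.) So there is nothing to compare against, and your outline is the standard argument that those references carry out.

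Your sketch is sound. Two remarks worth making. First, you correctly flag that claim (2) as written requires $f$ to be surjective: the projection $R \to A$ has image $\pi^{-1}(\operatorname{im} f)$, so $Y \to \Spec R$ is a closed immersion only in that case. In fact the statement as printed likely contains a typo --- with only $Z \hookrightarrow Y$ closed, it is $X \to X \amalg_Z Y$ that is always a closed immersion (since $R \to B$ is always surjective, as you observe), and this is what Schwede actually proves. The paper only ever uses the case where \emph{both} legs are closed immersions (see the corollary immediately following), so the discrepancy is harmless for its purposes.

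Second, the step you rightly identify as the real content --- promoting the affine pushout to a pushout in all schemes --- needs a bit more than picking affine opens in $W$. One must also know that $|\Spec R|$ is covered by the images of $|X|$ and $|Y|$ and that small affine opens of $\Spec R$ are again fiber products of the same shape; both of these lean on the closed-immersion hypothesis. You allude to this at the end, and it is exactly where Schwede and Ferrand do the work.
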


\begin{thm}\cite[Cor. 3.7]{schwede}, \cite[Tag 07RS]{stacks-project}\label{pushouts_exist}
Let $Z \hookrightarrow X$ and $Z \hookrightarrow Y$ be closed immersions of schemes. Then $X \amalg_Z Y$ exists. 
\end{thm}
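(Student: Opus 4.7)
The plan is to reduce the general statement to the affine case already handled by Theorem 2.3 (the previous theorem in the excerpt) via a gluing argument. Because closed immersions are affine and both legs $Z \hookrightarrow X$ and $Z \hookrightarrow Y$ are closed, we can try to cover the putative pushout $W := X \amalg_Z Y$ by affine pieces that locally look like the affine pushouts of Theorem 2.3 and then assemble them by the usual gluing lemma for schemes.

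First I would set the underlying topological space of $W$ to be the pushout $X \sqcup_Z Y$ in topological spaces, so that set-theoretically $W = (X \setminus Z) \sqcup Z \sqcup (Y \setminus Z)$, with $X, Y$ closed and $X \cap Y = Z$. Points of $X \setminus Z$ and $Y \setminus Z$ are easy: pick an affine open neighborhood in $X$ (respectively $Y$) disjoint from $Z$; this is automatically an affine open in $W$. The delicate case is a point $z \in Z$. Here I would proceed as follows. Choose an affine open $V = \Spec B \subseteq Y$ containing $z$; since $Z \hookrightarrow Y$ is a closed immersion, $V \cap Z = \Spec B/J$ is an affine open of $Z$. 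Next pick an affine open $U' = \Spec A' \subseteq X$ containing $z$; then $U' \cap Z$ is an affine open of $Z$ containing $z$ and inside $V \cap Z$, but possibly strictly smaller or different. Using that $Z \hookrightarrow X$ is a closed immersion (hence affine) together with standard basic-open-shrinking, I would shrink $U'$ to an affine open $U = \Spec A \subseteq U'$ whose intersection with $Z$ equals the common affine open $\Spec B/J = \Spec A/I$ of $Z$ around $z$ — this is possible because opens in $Z$ lift to opens in $X$, and every open around a point in an affine contains a basic affine open. At this point, the affine pushout theorem (Theorem 2.3) produces $U \amalg_{\Spec A/I} V = \Spec(A \times_{A/I} B)$, together with closed immersions from $U$ and $V$.

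Having such affine patches near every point, I would then verify the cocycle condition: given two affine patches $U_1 \amalg_{Z_1} V_1$ and $U_2 \amalg_{Z_2} V_2$ around points $z_1, z_2 \in Z$, their overlap in $W$ is canonically identified by gluing the overlaps $U_1 \cap U_2$ on the $X$-side with $V_1 \cap V_2$ on the $Y$-side along $Z_1 \cap Z_2$; using Theorem 2.3 again on smaller affine patches inside the overlaps and the universal property of pushouts of affines, one sees that these identifications are compatible. This produces a scheme $W$ with closed immersions $X \to W$ and $Y \to W$ agreeing on $Z$. The universal property can then be checked locally: given a test scheme $T$ with compatible maps $X \to T$ and $Y \to T$, restrict to the affine patches of $W$ and of $T$ and apply the universal property of the affine pushouts, which glues uniquely by Zariski-locality of morphisms of schemes.

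The main obstacle I anticipate is the compatible choice of affine neighborhoods in the third step above: matching an affine open of $X$ and an affine open of $Y$ so that their intersections with $Z$ are literally the same affine open of $Z$, rather than merely overlapping. Everything else is formal gluing, but this local matching is what actually uses the hypothesis that both legs are closed immersions (and hence affine morphisms), and it is the technical core one must get right before Theorem 2.3 can be invoked.
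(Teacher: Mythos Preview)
The paper does not give its own proof of this theorem; it is stated with a citation to Schwede and the Stacks Project and no argument is supplied in the text. Your overall strategy---construct the pushout of underlying spaces, cover by affine patches, invoke the affine pushout of the preceding theorem on each patch, and glue---is exactly the approach taken in those references, so in that sense your proposal is on target.

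There is, however, a genuine imprecision in your ``shrinking'' step that would cause the argument to stall as written. You assert that after choosing an affine $V=\Spec B\subseteq Y$ and an affine $U'=\Spec A'\subseteq X$ around $z$, the intersection $U'\cap Z$ lies inside $V\cap Z$; there is no reason for this. You then propose to shrink $U'$ alone to an affine $U$ with $U\cap Z = V\cap Z=\Spec B/J$. This is generally impossible: shrinking $U'$ can only make $U\cap Z$ smaller, and $V\cap Z$ need not be contained in $U'\cap Z$ to begin with. The correct manoeuvre, and the one carried out in the cited sources, is to shrink \emph{both} sides symmetrically: take a basic open $D(\overline{g})\subseteq (U'\cap Z)\cap(V\cap Z)$ containing $z$, lift $\overline{g}$ to $g_A\in A'$ and $g_B\in B$, and replace $U'$ by $D(g_A)$ and $V$ by $D(g_B)$. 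Then $D(g_A)\cap Z = D(\overline{g}) = D(g_B)\cap Z$, and the affine pushout theorem applies. Once this is fixed, the rest of your gluing outline goes through as in Schwede's argument.
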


Although not stated explicitly in Schwede, it is a consequence of the proof of Thm. \ref{pushouts_exist} that closed inclusions are preserved by cobase change:

\begin{cor}
In the situation of Thm.\ref{pushouts_exist}, $X \to X \amalg_Z Y$ and $Y \to X \amalg_Z Y$ are closed immersions. 
\end{cor}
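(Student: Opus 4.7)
The plan is to reduce to the affine case handled in the first theorem and then patch. In that affine setup, $X = \Spec B$, $Y = \Spec A$, $Z = \Spec A/I$, and the pushout is $\Spec C$ with
\[
C = B \times_{A/I} A = \{(b,a) \in B \times A : f(b) = a + I\}.
\]
Under the hypothesis of Theorem \ref{pushouts_exist}, both $Z \hookrightarrow Y$ and $Z \hookrightarrow X$ are closed immersions, so both ring maps $A \to A/I$ and $f \colon B \to A/I$ are surjective. A one-line diagram chase then shows that the two projections $C \to A$ and $C \to B$ are surjective: given $a \in A$, choose any $b \in B$ with $f(b) = a + I$ and observe that $(b,a) \in C$ maps to $a$, and symmetrically for the projection to $B$. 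Hence both $X \to X \amalg_Z Y$ and $Y \to X \amalg_Z Y$ correspond to surjections of rings and are therefore closed immersions in the affine case.

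For the general case, I would appeal directly to the construction of $X \amalg_Z Y$ given in the proof of Theorem \ref{pushouts_exist}: one chooses compatible affine covers of $X$, $Y$, and $Z$ (using that $Z$ is closed in both $X$ and $Y$), forms the affine pushouts locally via the first theorem, and glues them along the overlaps to obtain $X \amalg_Z Y$. The structure maps $X \to X \amalg_Z Y$ and $Y \to X \amalg_Z Y$ restrict over each affine chart of the resulting cover to the affine maps treated in the previous paragraph. Since being a closed immersion is a property local on the target, both global maps are closed immersions.

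The substantive content is the affine statement; the patching is essentially formal once the gluing data from the proof of Theorem \ref{pushouts_exist} is in hand, so the main (and quite mild) obstacle is just keeping the covers compatible — which is already done in that proof.
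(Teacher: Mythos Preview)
Your argument is correct and is precisely what the paper has in mind: the paper does not give an explicit proof, merely noting that the statement is a consequence of Schwede's proof of Theorem~\ref{pushouts_exist}, and you have simply unpacked that remark---the affine case via surjectivity of the two projections out of the fiber product ring, then Zariski-local patching. There is nothing to add.
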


However, we will need more. Since we will be working just in the category of varieties, we need that in fact pushouts exists in that category. 

\begin{prop}\label{pushout_varieties}
Let $Z \to X$, $Z \to Y$ be closed embeddings of varieties. Form the pushout $X \amalg_Z Y$ in the category of schemes. Then  $X \amalg_Z Y$ is a variety. 
\end{prop}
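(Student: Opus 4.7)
The pushout $W := X \amalg_Z Y$ has already been shown to exist as a scheme and to carry closed immersions $X, Y \hookrightarrow W$ (Thm.~\ref{pushouts_exist} and its corollary). It remains to verify that the structure morphism $W \to \Spec k$ is of finite type and separated, and I would attack these two properties independently.

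For finite type: the pushout is built locally by gluing affine pushouts of the form $\Spec(B \times_{A/I} A)$, where $f \colon B \twoheadrightarrow A/I$ encodes the closed immersion $Z \hookrightarrow X$ (so $f$ is surjective, which will be the crucial hypothesis). I would verify directly that $C := B \times_{A/I} A$ is a finite type $k$-algebra whenever $A$ and $B$ are: choose $k$-algebra generators $b_i$ of $B$ and $a_j$ of $A$; use surjectivity of $f$ to lift each $f(b_i)$ to some $\tilde a_i \in A$ and each class $a_j \bmod I$ to some $\tilde b_j \in B$; and pick $A$-module generators $j_l$ of $I$ (finitely many, since $A$ is Noetherian). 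The elements $(\tilde a_i, b_i)$, $(a_j, \tilde b_j)$, and $(j_l, 0)$ all lie in $C$, and a short decomposition argument shows that every $(a,b) \in C$ is a polynomial in these. Noetherianity of $X$ and $Y$ supplies a finite cover of $W$ by such affine charts, whence $W \to \Spec k$ is of finite type. This is the main technical hurdle: fiber products of finite type algebras are not finite type in general, and one leans essentially on surjectivity of $f$ (together with Noetherianity of $A$).

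For separatedness I would apply the valuative criterion: fix a DVR $R$ with fraction field $K$ and show that two morphisms $\phi_1, \phi_2 \colon \Spec R \to W$ agreeing on $\Spec K$ must coincide. The key observation is that $W \setminus Y$ and $W \setminus X$ are disjoint open subschemes of $W$, contained respectively in $X$ and $Y$. Since $\Spec R$ is irreducible, the image of its generic point lands in exactly one of the three strata $X \setminus Z$, $Y \setminus Z$, or $Z$. In each case the image of the closed point is forced into $X$, $Y$, or $Z$ respectively; moreover, because $R$ is a domain, a morphism from $\Spec R$ whose set-theoretic image lies in a closed subscheme automatically factors through that subscheme scheme-theoretically (any ideal killed at the generic point of a domain is killed identically). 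The problem thus reduces to separatedness of $X$, $Y$, or $Z$, each of which is a variety by hypothesis.
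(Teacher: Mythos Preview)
The paper states this proposition without proof. Your plan is correct and supplies the missing verification, at least over a Noetherian base.

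For finite type, your computation that $B \times_{A/I} A$ is finitely generated is right; surjectivity of $f$ and Noetherianity of $A$ are exactly the needed inputs, and your proposed generating set does the job. One caveat: the paper's definition of ``variety'' allows an \emph{arbitrary} base scheme, and over a non-Noetherian base the proposition actually fails. Take $S = \Spec R$ with $R = k[t_1, t_2, \ldots]$, and let $X = Y = \Spec R$, $Z = \Spec R/(t_1, t_2, \ldots) = \Spec k$; these are all finite type and separated over $S$, but the pushout $\Spec(R \times_k R)$ is not finite type over $R$, since generating $R \times_k R$ as an $R$-algebra (via the diagonal structure map) would force the ideal $(t_1, t_2, \ldots)$ to be finitely generated. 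So the proposition should be read with a Noetherian-base hypothesis, under which your proof goes through; this covers the field case the paper actually uses.

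For separatedness, your valuative-criterion argument is correct, including the scheme-theoretic factorization step (the key being that a domain has a generic point, so the ideal sheaf of $X$ pulls back into the kernel). One small point worth making explicit: the ``if'' direction of the DVR valuative criterion requires $W$ to be Noetherian (or at least quasi-separated), but this follows from the finite-type part once the base is Noetherian.
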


  In the category of varieties, an ``exact sequence'' will be a sequence
\[
X \hookrightarrow Y \xleftarrow{\circ} Y - X
\]
where the first map is a closed embedding. These will be the sequences we want to split. In order to view them as the input to a $K$-theory machine, however, we have to verify a number of categorical properties. In the rest of the section we collect these properties.

First, we define how to subtract schemes. 

\begin{defn}
  Let $i: Z \hookrightarrow X$ be a closed immersion. We define $X - Z$ as follows. The immersion $i$ determines a homeomorphism onto a closed subset $i(Z) \subset X$, which in turn determines an open subset $X - i(Z)$ of $X$. To view this as a scheme, we restrict the structure sheaf $\mc{O}_X$ to $X - i(Z)$. That is,
  \[
  X - Z = (X - i(Z), \mc{O}_X|_{X - i(Z)})
  \]
\end{defn}

\begin{rmk}
  This is a good time to remark on the functoriality of subtraction. It is clear that given a diagram
  \begin{equation}\label{functorial_subtraction_diagram}
  \xymatrix{
    W \ar@{^{(}->}[r] \ar@{^{(}->}[d] & X\ar@{^{(}->}[d] \\
    Z \ar@{^{(}->}[r] & Y
  }
  \end{equation}
  there need not be an induced map $X - W \to Y - Z$. Indeed, if $X = Y$ and $W$ is strictly contained in $Z$, then $X-W$ \textit{contains} $Y-Z$. This is fixed, however, if we require that the diagram be cartesian. On the level of sets, this corresponds to intersecting $X$ and $Z$ inside of $Y$. We then get a map $X - W \hookrightarrow Y- Z$.

  In the case (\ref{functorial_subtraction_diagram}) is cartesian, there is more we can do. We can extend it to a diagram
  \[
  \xymatrix{
    W  \ar@{^{(}->}[r]  \ar@{^{(}->}[d] & X \ar@{^{(}->}[d] & X - W \ar@{^{(}->}[d] \ar[l]_{\circ}\\
    Z  \ar@{^{(}->}[r] & Y & Y - Z \ar[l]_{\circ} & \\
    Z-W  \ar[u]_{\circ}\ar@{^{(}->}[r] & Y - X \ar[u]_{\circ} &  \square \ar[u]^{\circ}\ar[l]_{\circ}
  }
  \]
  where
  \[
  (Y-Z)-(X-W) = \square = (Y-X)-(Z-W). 
  \]
  and all maps along the bottom and right border are uniquely determined.
\end{rmk}

In general, however, this is a choice of subtraction; there are many other choices isomorphic to it. A better way to encode subtraction is the following:

\begin{defn}\label{subtraction_sequence_varieties}
We define the collection of maps $Z \hookrightarrow X \xleftarrow{\circ} Y$ such that the left map is a closed immersion, the right map is an open immersion and the underlying topological space of $X$ is the disjoint union of the underlying topological spaces of $Z$ and $Y$ to be the  \textbf{subtraction sequences}. 
\end{defn}

Note that with $X - Z$ defined as above, $Z \hookrightarrow X \xleftarrow{\circ} X-Z$ is a subtraction sequence. However, working with subtraction sequences allow for choices of isomorphic subtractions.

A property of these subtraction sequences is that they are closed under pullback:

\begin{prop}\label{subtractions_pullback}
  Subtraction sequences are closed under pullback: Given a subtraction sequence $Z \hookrightarrow X \xleftarrow{\circ} Y$ and a map $X'$, the bottom row in the diagram below is a subtraction sequence:
  \[
  \xymatrix{
    Z \ar@{^{(}->}[r]  & X  & Y \ar[l]^\circ\\
    X' \times_X Z \ar[u] \ar@{^{(}->}[r] & X'\ar[u] & X' \times_X Y \ar[l]^\circ \ar[u]
  }
  \]
\end{prop}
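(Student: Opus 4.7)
The plan is to verify the three defining conditions of Definition \ref{subtraction_sequence_varieties} one at a time for the bottom row of the pullback diagram. Write $f \colon X' \to X$ for the given map.

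First I would handle the immersion conditions by appealing to standard stability of immersions under base change. That closed immersions are preserved under arbitrary base change is standard (e.g.\ Stacks, Tag 01JY), so $X'\times_X Z \hookrightarrow X'$ is a closed immersion. Likewise, open immersions are stable under base change, so $X'\times_X Y \to X'$ is an open immersion. These two steps are formal and require no work beyond citation.

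The step that requires genuine care is the topological disjoint-union condition. For a closed (respectively open) immersion $i \colon Z \hookrightarrow X$, the underlying topological space of the pullback $X'\times_X Z$ is canonically homeomorphic to the set-theoretic preimage $f^{-1}(|Z|) \subset |X'|$ with the subspace topology; this holds because $i$ induces a homeomorphism onto its image and fiber products along monomorphisms agree with set-theoretic intersection at the level of points. Applying this to both $Z \hookrightarrow X$ and $Y \xrightarrow{\circ} X$, I get
\[
|X'\times_X Z| \;=\; f^{-1}(|Z|), \qquad |X'\times_X Y| \;=\; f^{-1}(|Y|),
\]
as subsets of $|X'|$. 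Since $Z \hookrightarrow X \xleftarrow{\circ} Y$ is a subtraction sequence, $|X| = |Z| \sqcup |Y|$, and taking preimages preserves disjoint unions, so
\[
|X'| \;=\; f^{-1}(|X|) \;=\; f^{-1}(|Z|) \sqcup f^{-1}(|Y|) \;=\; |X'\times_X Z| \sqcup |X'\times_X Y|,
\]
which is the required condition.

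The main (and only real) obstacle is the topological identification in the last step: one must be careful that although fiber products of schemes do not in general compute fiber products of underlying topological spaces, they do when one of the legs is an immersion, and this is precisely what saves the argument. Once that identification is in hand, the proposition follows immediately.
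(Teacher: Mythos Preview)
Your proof is correct and follows the same approach as the paper's own proof, which simply notes that open and closed immersions are stable under base change and that the disjoint-union condition on underlying spaces is clear. You have merely unpacked the topological step (that pullback along an immersion computes the set-theoretic preimage) more carefully than the paper does.
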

\begin{proof}
The statement is clearly true for the underlying topological spaces and open and closed immersions are both closed under pullback. 
\end{proof}

We record a useful corollary of Prop. \ref{subtractions_pullback}

\begin{cor}
If $i: X \hookrightarrow Y$ and $j: Y \hookrightarrow Z$ are closed immersions, then $Y - X \to Z - X$ is a closed immersion. 
\end{cor}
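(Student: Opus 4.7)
My plan is to exhibit the map $Y - X \to Z - X$ as the base change of the closed immersion $j: Y \hookrightarrow Z$ along the open immersion $Z - X \xrightarrow{\circ} Z$, and then invoke the fact that closed immersions are stable under base change. Since closed immersions compose, the composite $X \hookrightarrow Y \hookrightarrow Z$ is itself a closed immersion, so $Z - X$ is defined, and restricting $j$ to open complements gives a canonical map $Y - X \to Z - X$ fitting in a commutative square whose vertical legs are the open immersions $Y - X \xrightarrow{\circ} Y$ and $Z - X \xrightarrow{\circ} Z$ and whose bottom edge is $j$.

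The key step is identifying this square as cartesian, i.e.\ checking that $Y - X \cong Y \times_Z (Z - X)$. Since $Z - X \to Z$ is an open immersion, the fiber product on the right is the open subscheme of $Y$ whose underlying set is $j^{-1}(Z \setminus X)$, equipped with the restriction of $\mc{O}_Y$. Using that $X$ sits inside $Y$ as a closed subscheme and that the composite $X \hookrightarrow Y \hookrightarrow Z$ agrees with the given closed immersion $X \hookrightarrow Z$, we get $j^{-1}(X) = X$ as subsets of $Y$, whence $j^{-1}(Z \setminus X) = Y \setminus X$. This identifies the fiber product with $(Y \setminus X, \mc{O}_Y|_{Y \setminus X}) = Y - X$ in the sense of Definition \ref{subtraction_sequence_varieties}.

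Granted the identification, the conclusion is immediate: $Y - X \to Z - X$ is the pullback of the closed immersion $j$ along $Z - X \to Z$, hence a closed immersion itself. I do not expect any substantive obstacle; the only content is the topological/sheaf-theoretic bookkeeping in the fiber product identification, which is routine once one notes that $X$ is a common closed subscheme of both $Y$ and $Z$. Alternatively, one could extract the same statement by combining Proposition \ref{subtractions_pullback} applied to $X \hookrightarrow Z \xleftarrow{\circ} Z - X$ pulled back along $Y \to Z$ (which recovers $Y - X$ as the rightmost term, since $X \times_Z Y = X$) with base-change stability, but the direct square-is-cartesian argument seems cleanest.
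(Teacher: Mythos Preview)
Your proposal is correct and matches the paper's intended argument: the corollary is recorded without proof as a direct consequence of Proposition~\ref{subtractions_pullback}, and your cartesian-square identification $Y - X \cong Y \times_Z (Z-X)$ together with base-change stability of closed immersions is precisely the unpacking of that proposition in this special case (as you yourself note in your closing sentence).
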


We restate the observation of Schwede (in \cite{schwede} above Lem. 3.8) that if $X$ and $Y$ are closed in an ambient scheme $W$ with intersection ideals $\mc{I}_X$ and $\mc{I}_Y$ respectively, and $Z$ is the scheme intersection, then $X \amalg_Z Y$ is cut out by $\mc{I}_X \cap \mc{I}_Y$ in $W$. 

\begin{prop}\cite{schwede}\label{pushout_product}
  Given a cocartesian diagram of varieties
  \[
  \xymatrix{
    Z \ar[r]\ar[d] & X\ar[d] \\
    Y \ar[r] & W
  }
  \]
  where all maps are cofibrations, the map $X \amalg_Z Y \to W$ is a cofibration. 
\end{prop}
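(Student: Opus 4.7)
The plan is to reduce to the affine case, where the proposition becomes a direct ring-theoretic statement of the kind Schwede records in the observation quoted just before the proposition. Since the property of being a closed immersion is local on the target, and the explicit affine construction of pushouts from the first theorem of the section is local on $W$, it suffices to pull the entire square back along an affine open cover of $W$ and work there. So one may assume $W = \Spec C$ is affine, with $X = \Spec C/I_X$ and $Y = \Spec C/I_Y$ for ideals $I_X, I_Y \subseteq C$. The hypothesis that the square is cocartesian, combined with the fact that $Z$ is a closed subvariety of both $X$ and $Y$, forces $Z = \Spec C/(I_X + I_Y)$, i.e.\ $Z$ is the scheme-theoretic intersection $X \times_W Y$.

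With this setup, the affine pushout formula from Theorem 2.3 identifies
\[
X \amalg_Z Y \;=\; \Spec\bigl(C/I_X \times_{C/(I_X + I_Y)} C/I_Y\bigr),
\]
and the canonical map $X \amalg_Z Y \to W$ corresponds to the ring homomorphism $\rho \colon C \to C/I_X \times_{C/(I_X+I_Y)} C/I_Y$ given by $c \mapsto (c + I_X,\, c + I_Y)$. The content of the proposition then reduces to showing that $\rho$ is surjective with kernel $I_X \cap I_Y$; this identifies the pushout with the closed subscheme $V(I_X \cap I_Y) \subseteq W$, exactly as Schwede asserts. The kernel computation is immediate. For surjectivity, given $(\bar a, \bar b)$ with $a - b \in I_X + I_Y$, write $a - b = i + j$ with $i \in I_X$ and $j \in I_Y$; then $c := a - i = b + j$ satisfies $\rho(c) = (\bar a, \bar b)$.

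The main obstacle is not the affine calculation itself but the affine reduction: one must verify that the local pushout description of Theorem 2.3 is compatible with restriction to affine opens of $W$ and glues consistently on overlaps to produce the global pushout with the stated canonical map. This compatibility is what legitimizes working with a single $\Spec C$ and is essentially the content of the Stacks Project tag cited at the start of the section. Once it is in hand, the rest is the two-line ideal computation above, yielding the closed immersion $X \amalg_Z Y \cong \Spec(C/(I_X \cap I_Y)) \hookrightarrow W$.
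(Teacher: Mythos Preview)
The paper gives no proof; it simply cites Schwede, and the sentence immediately preceding the proposition records the key content (that when $Z$ is the scheme intersection, $X \amalg_Z Y$ is cut out by $\mc{I}_X \cap \mc{I}_Y$ in $W$). Your affine computation is precisely a spelling-out of that observation, so your approach matches the paper's.

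One caveat worth flagging: the printed hypothesis ``cocartesian'' is a typo for ``cartesian'' --- compare how the proposition is invoked in the proof of Proposition~\ref{var_sub_pushout} and how the corresponding axiom is stated in Definition~\ref{subtractive_category}; read literally, a cocartesian square would make $X \amalg_Z Y \to W$ an isomorphism and the statement vacuous. Your sentence ``the hypothesis that the square is cocartesian \ldots\ forces $Z = \Spec C/(I_X+I_Y)$'' inherits this slip: it is the \emph{cartesian} hypothesis that identifies $Z$ with the scheme-theoretic intersection, and you should state that directly rather than claim to derive it from ``cocartesian.''
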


\begin{rmk}\label{pushout_pullback}
We also note that cocartesian diagrams above are cartesian squares. 
\end{rmk}

Pushout also interacts in a controlled way with subtraction sequences:

\begin{prop}\label{var_sub_pushout}
  Given a diagram
  \[
  \xymatrix{
    X '  \ar@{^{(}->}[d] & W' \ar@{_{(}->}[l]\ar@{^{(}->}[r] \ar@{^{(}->}[d] & Y'\ar@{^{(}->}[d]\\
    X & W \ar@{^{(}->}[r] \ar@{_{(}->}[l] & Y\\
    X'' \ar[u]^\circ & W'' \ar@{^{(}->}[r] \ar@{_{(}->}[l] \ar[u]^\circ & Y'' \ar[u]^\circ
  }
  \]
  such that the columns are subtraction sequences, and both top squares are cartesian squares, the pushouts of the rows form a subtraction sequence
  \[
  Y' \amalg_{W'} X' \hookrightarrow Y \amalg_W X \xleftarrow{\circ} Y'' \amalg_{W''} X''
  \]
\end{prop}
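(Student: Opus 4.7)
First I would establish existence: each row has both legs equal to closed immersions, so Proposition~\ref{pushout_varieties} (together with Theorem~\ref{pushouts_exist}) guarantees that all three pushouts $Y' \amalg_{W'} X'$, $Y \amalg_W X$, and $Y'' \amalg_{W''} X''$ exist as varieties. The universal property then yields canonical morphisms $\phi: Y' \amalg_{W'} X' \to Y \amalg_W X$ and $\psi: Y'' \amalg_{W''} X'' \to Y \amalg_W X$ induced by the respective column maps. The remaining task is to show $\phi$ is a closed immersion, $\psi$ is an open immersion, and the underlying topological spaces decompose disjointly.

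Second, I would show $\phi$ is a closed immersion by applying Proposition~\ref{pushout_product}. Via the closed immersions $Y \hookrightarrow Y \amalg_W X$ and $X \hookrightarrow Y \amalg_W X$ (supplied by Theorem~\ref{pushouts_exist}), the varieties $Y'$ and $X'$ sit as closed subschemes of $Y \amalg_W X$. The cartesian hypothesis on the top two squares translates into the scheme-theoretic identity $Y' \cap X' = W'$ inside $Y \amalg_W X$. Granted this, Proposition~\ref{pushout_product} identifies $Y' \amalg_{W'} X'$ with the closed subscheme cut out by $\mc{I}_{Y'} \cap \mc{I}_{X'}$, so $\phi$ is a closed immersion.

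Third, I would identify the image of $\psi$ with the open complement of $\phi$'s image. Topologically, $|Y \amalg_W X| = |Y| \cup_{|W|} |X|$ splits as the disjoint union $(|Y'| \cup_{|W'|} |X'|) \sqcup (|Y''| \cup_{|W''|} |X''|)$, since the columns are subtraction sequences and the cartesian condition ensures the decompositions $|Y| = |Y'| \sqcup |Y''|$, $|X| = |X'| \sqcup |X''|$, $|W| = |W'| \sqcup |W''|$ are compatible with the gluing. The second piece is open because its preimages in $Y$ and $X$ are the open subsets $Y''$ and $X''$. To match the scheme structures, I would use the local fiber-product description of the pushout: restricting $\mc{O}_Y \times_{\mc{O}_W} \mc{O}_X$ to the open subset $|Y''| \cup_{|W''|} |X''|$ recovers $\mc{O}_{Y''} \times_{\mc{O}_{W''}} \mc{O}_{X''}$, which is exactly the structure sheaf of $Y'' \amalg_{W''} X''$. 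Hence $\psi$ is an open immersion complementary to $\phi$, and the three pushouts assemble into a subtraction sequence.

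The main obstacle will be the scheme-theoretic (not merely set-theoretic) equality $Y' \cap X' = W'$ inside $Y \amalg_W X$. On an affine patch with $Y = \Spec B$, $W = \Spec A/I$, $X = \Spec A$, and $Y \amalg_W X = \Spec C$ for $C = B \times_{A/I} A$ (as produced by Theorem~\ref{pushouts_exist}), writing $Y' = \Spec B/L$ and $X' = \Spec A/K'$, the cartesian condition forces $W' = \Spec A/(I+K')$ and constrains the structure map $f: B \to A/I$ so that $f(L) \subset (I+K')/I$. A direct manipulation then shows that the ideals of $Y'$ and $X'$ in $C$ sum to precisely the ideal cutting out $W'$, which is the required input for Proposition~\ref{pushout_product}. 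Once this local verification is in hand, the rest of the argument amounts to tracking the sheaf-theoretic data and invoking the standard identifications of pushouts of schemes.
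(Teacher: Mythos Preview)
Your proposal is correct, and it is more thorough than the paper's own proof, which only treats the closed immersion $\phi$ and asserts that this suffices; your explicit handling of the open immersion $\psi$ and the topological decomposition fills in what the paper leaves to the reader. The one genuine methodological difference is in how you obtain that $\phi$ is a closed immersion. You propose to apply Proposition~\ref{pushout_product} directly to the square $(W',X',Y',Y\amalg_W X)$, which forces you into an affine computation to verify the scheme-theoretic intersection $Y'\cap X' = W'$ inside $Y\amalg_W X$. The paper avoids this by a pasting argument: since the pushout square $(W,X,Y,X\amalg_W Y)$ is cocartesian and hence cartesian (Remark~\ref{pushout_pullback}), stacking it with the cartesian hypothesis square $(W',X',W,X)$ gives that $(W',X',Y,X\amalg_W Y)$ is cartesian, so Proposition~\ref{pushout_product} yields that $X'\amalg_{W'} Y \hookrightarrow X\amalg_W Y$ is closed; then $X'\amalg_{W'} Y' \hookrightarrow X'\amalg_{W'} Y$ is closed by cobase change of $Y'\hookrightarrow Y$, and one composes. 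In fact the same pasting trick would prove your cartesian claim without any local computation: $Y'\times_{Y\amalg_W X} X' \cong Y'\times_Y (Y\times_{Y\amalg_W X} X)\times_X X' \cong Y'\times_Y W\times_X X' \cong Y'\times_Y W' \cong W'$, using that closed immersions are monomorphisms. So your route is sound but can be streamlined; the paper's factorization through $X'\amalg_{W'} Y$ is a bit slicker but ultimately rests on the same ingredients.
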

\begin{proof}
  We examine the diagram
  \[
  \xymatrix@C=.3cm@R=.3cm{
    W' \ar[rr] \ar[dr] \ar[dd] & & Y' \ar[dr]\ar[dd] & \\
    & X' \ar[dd] \ar[rr] & & X' \amalg_{W'} Y' \ar[dd]\\
    W \ar[rr]\ar[dr] & & Y \ar[dr] \\
     & X \ar[rr] & & X \amalg_{W} Y
  }
  \]
  where the back and left faces are cartesian and all arrows except the rightmost are closed immersions.  It suffices to show that the rightmost arrow is a closed immersion.

  The left square is cartesian and the bottom square is cocartesian and so cartesian by Rmk. \ref{pushout_pullback}. Thus the composite square
  \[
  \xymatrix{
    W' \ar[d]\ar[r] & X'\ar[d]\\
    Y \ar[r] & X \amalg_W Y 
  }
  \]
  is cartesian. Thus, by Prop. \ref{pushout_product} the map $X' \amalg_{W'} Y \to X \amalg_W Y$ is a closed immersion. But, $X' \amalg_{W'} Y' \to X' \amalg_{W'} Y$ is a closed immersion as well, and closed immersions compose. 
\end{proof}

\section{The $K$-Theory Spectrum of Varieties}

Typically, the input for algebraic $K$-theory is a category imbued with some notion of cofiber sequence that $K$-theory then splits: for cofiber sequences $A \to B \to C$ in $\mc{C}$, we have the relation $[B] = [A] + [C]$ in $K_0 (\mc{C})$ with more subtle information encoded in higher $K$-groups. One of the more general constructions of algbraic $K$-theory is due to Waldhausen, and the categories on which Waldhausen's machine operates are, naturally, called Waldhausen categories \cite{waldhausen}. We review the construction of the $K$-theory of a Waldhausen category $\mc{C}$ below. 

For the category $\mbf{Var}_{/k}$, the axioms of a Waldhausen category are certainly not satisfied: the sequences we would like to split, sequences of the form $Z \hookrightarrow X \leftarrow X-Z$, do not even have morphisms in the appropriate directions. To circumvent this issue, we introduce the formalism of subtractive categories (Def. \ref{subtractive_category}) and a modified version of Waldhausen's construction of $K$-theory for such categories. 

Once the $K$-theory space is constructed, we can show that it is in fact an infinite loop space, or spectrum. This is done by proving the additivity theorem, a rigorous version of the statement that $K$-theory splits exact sequences. The key point is that subtraction sequences interact well enough with pushouts to allow adaptations of proofs of additivity (e.g. \cite{mccarthy}) to go through in the new context.

\subsection{Waldhausen Categories and the $S_\bullet$-construction}

We give a rapid review of Waldhausen categories and the construction of the $K$-theory spectrum for Waldhausen categories.

\begin{defn}\cite[Sect. 1.1]{waldhausen}
  A \textbf{Walhdausen category} $\mc{C}$ is a category with an initial and terminal object $\ast$,  equipped with two distinguished subcategories
  \begin{enumerate}
  \item cofibrations, denoted $\mbf{co}(\mc{C})$, with arrows in $\mbf{co}(\mc{C})$ denoted $\hookrightarrow$
  \item weak equivalences, denoted $\mbf{w}(\mbf{C})$, with arrows in $\mbf{w}(\mc{C})$ denoted $\xrightarrow{\sim}$.
  \end{enumerate}
  These are required to satisfy the following axioms
  \begin{itemize}
  \item The isomorphisms of $\mc{C}$ are in both $\mbf{co}(\mc{C})$ and $\mbf{w}(\mc{C})$
  \item For $C \in \mc{C}$, $\ast \to \mc{C}$ is a cofibration.
  \item Given a cofibration $C \hookrightarrow D$ and any arrow $C \to C'$, the pushout $C' \amalg_C D$ exists and furthermore $C' \to C' \amalg_C D$ is a cofibration
  \item Given a diagram
    \[
    \xymatrix{
      D \ar[d]^{\sim}  & C \ar[d]^{\sim}\ar[d]\ar@{_{(}->}[l]\ar[r] & E \ar[d]^{\sim}\\
      D' & C' \ar@{_{(}->}[l]\ar[r] & E'
    }
    \]
    the induced map $D \amalg_C E \to D' \amalg_{C'} E'$ is a weak equivalence. 
  \end{itemize}
\end{defn}

\begin{rmk}
Given a cofibration $C \hookrightarrow D$, one can form a pushout along $C \to \ast$ to form a quotient $D/C$. The resulting sequence $C \hookrightarrow D \to D/C$ is called a cofiber sequence. 
\end{rmk}

This is all the structure that is required to define $K$-theory. Note first, there is certainly a notion of Grothendieck group for a Waldhausen category, $\mc{C}$: it has generators the isomorphism classes $[C]$ with $C \in \mc{C}$ and relations $[C]+[D/C] = [D]$ for cofiber sequences $C \hookrightarrow D\to D/C$.

Before going on, we define the notion of functor between Waldhausen categories.

\begin{defn}
Let $\mc{C}$ and $\mc{D}$ be Waldhausen categories. Then a functor $F: \mc{C} \to \mc{D}$ is \textbf{exact} if it preserves zero objects, cofibrations, pushouts along cofibrations and weak equivalences. 
\end{defn}

To define higher $K$-groups, we need the $S_\bullet$ construction.

\begin{defn}
  Let $\mc{C}$ be a Waldhausen category. Let $\operatorname{Ar}[n]$ denote the arrow category: the objects are pairs $(i, j)$ with $0 \leq i \leq j \leq n$ and morphisms are $(i, j) \to (i', j')$ with $i \leq i'$ and $j \leq j'$. We define a category $S_n \mc{C}$ to be the full subcategory of functors $F: \operatorname{Ar}[n] \to \mc{C}$ such that
  \begin{enumerate}
  \item $F(i, i) = \ast$
  \item $F(i, j) \to F(i, k)$ is a cofibration for all $i \leq j \leq k$
  \item The square
    \[
    \xymatrix{
      F(i, j) \ar[d]\ar[r] & F(i, k)\ar[d]\\
      F(j, j)=\ast \ar[r] & F(j, k)
      }
    \]
    is cocartesian for all $i \leq j \leq k$. 
  \end{enumerate}

  The categories $S_n \mc{C}$ assemble into a simplicial category, which we will denote $S_\bullet \mc{C}$. The simplicial face maps $d_i: S_n \mc{C} \to S_{n-1} \mc{C}$ are given by deleting the $i$th row and $i$th column from a diagram in $S_n \mc{C}$. The degeneracies are given by inserting identity maps in the appropriate places. 
\end{defn}

We may now define the $K$-theory space.

\begin{defn}
  Let $w S_n \mc{C}$ be the subcategory of $S_n \mc{C}$ where morphisms are given by level-wise weak equivalences. We may form the simplicial category $w S_\bullet \mc{C}$ and take the level-wise nerve $Nw S_\bullet \mc{C}$, which we denote $w_\bullet S_\bullet \mc{C}$. The level-wise nerve $w_\bullet S_\bullet \mc{C}$ is a bisimplicial set. We define the \textbf{algebraic $K$-theory space of $\mc{C}$} to be
  \[
  K(\mc{C}):= \Omega |w_\bullet S_\bullet \mc{C}|
  \]
  where $|-|$ denotes the realization of a bisimplicial set. 
\end{defn}

Walhausen shows that in fact $K(\mc{C})$ is an infinite loop space. The crucial step is the additivity theorem:

\begin{thm}\cite[Prop 1.3.2]{waldhausen}
  Let $\mc{C}$ be a Waldhausen category and let $\mc{E}$ be the category whose objects are cofibration sequences $A \to B \to C$ in $\mc{C}$ and level-wise morphisms. Then $\mc{E}$ can be given the structure of a Waldhausen category.

  Furthermore, there are functors $s, q: \mc{E} \to \mc{C} \times \mc{C}$ given by taking $(A \to B \to C)$ to $A$ and $C$ respectively, and these induce an equivalence of simplicial sets
  \[
  w S_\bullet \mc{E} \xrightarrow{(s,q)} w S_\bullet \mc{C} \times w S_\bullet \mc{C}
  \]
\end{thm}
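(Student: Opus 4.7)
The plan is to endow $\mc{E}$ with a Waldhausen structure, construct an obvious section of $(s,q)$, and then reduce the remaining content to a ``functorial additivity'' principle where the genuinely combinatorial work concentrates.

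For the first step, I would declare a weak equivalence in $\mc{E}$ to be a morphism that is levelwise a weak equivalence in $\mc{C}$, and a cofibration in $\mc{E}$ to be a morphism $(A \hookrightarrow B \twoheadrightarrow C) \to (A' \hookrightarrow B' \twoheadrightarrow C')$ such that $A \hookrightarrow A'$, $C \hookrightarrow C'$, and the induced map $A' \amalg_A B \to B'$ are all cofibrations in $\mc{C}$. This condition on the middle map is exactly what is needed so that quotients in $\mc{E}$ are themselves cofiber sequences; the Waldhausen axioms for $\mc{E}$ then reduce to those of $\mc{C}$, using the gluing axiom in $\mc{C}$ both to build pushouts in $\mc{E}$ and to verify the gluing axiom in $\mc{E}$ levelwise.

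Next, I would define an exact functor $\sigma : \mc{C} \times \mc{C} \to \mc{E}$ by $\sigma(A, C) = (A \hookrightarrow A \amalg C \twoheadrightarrow C)$, where the coproduct exists as the pushout $A \amalg_\ast C$. Since $(s, q) \circ \sigma = \id_{\mc{C} \times \mc{C}}$, the map $wS_\bullet \sigma$ is a section of $wS_\bullet (s, q)$, and it therefore suffices to show that the other composite $\sigma \circ (s, q) : \mc{E} \to \mc{E}$ becomes homotopic to $\id_\mc{E}$ after applying $|wS_\bullet -|$. For this I would invoke the functorial additivity principle: if $F' \hookrightarrow F \twoheadrightarrow F''$ is a cofiber sequence of exact functors $\mc{A} \to \mc{B}$, then $|wS_\bullet F|$ is homotopic to the sum $|wS_\bullet F'| + |wS_\bullet F''|$ in the $H$-space structure on $|wS_\bullet \mc{B}|$. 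Applied to the tautological cofiber sequence of endofunctors $s \hookrightarrow \id_\mc{E} \twoheadrightarrow q$ on $\mc{E}$, and separately to the analogous decomposition of $\sigma \circ (s,q)$, both sides become the same sum $|s| + |q|$ on $K$-theory, yielding the desired homotopy $\id_\mc{E} \simeq \sigma \circ (s,q)$.

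The main obstacle is the functorial additivity principle itself, which is not formal. I would follow McCarthy's strategy \cite{mccarthy}: for each $n$, a diagram $X$ in $S_n \mc{A}$ together with a pointwise cofiber sequence $F' \hookrightarrow F \twoheadrightarrow F''$ of exact functors determines a diagram in $S_2 S_n \mc{B}$, and the structure maps of the $S_\bullet$-construction supply a chain of natural morphisms interpolating between $F(X)$ and $F'(X) \amalg F''(X)$ in the realization. These chains assemble coherently into a simplicial homotopy $\Delta^1 \times |wS_\bullet \mc{A}| \to |wS_\bullet \mc{B}|$. An alternative is a Quillen Theorem B argument, showing that the homotopy fiber of $(s,q)$ over a vertex $(A, C)$ is contractible by exhibiting the split sequence $\sigma(A, C)$ as an initial object in the relevant comma category up to coherent weak equivalence. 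Either way, the non-formal content is verifying that the combinatorics of $S_\bullet$ provide enough flexibility to interpolate between a non-split cofiber sequence and its split approximation, and this is the heart of the additivity theorem.
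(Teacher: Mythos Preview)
The paper does not prove this statement; it is recalled as background and cited to Waldhausen. The paper's own contribution is the analogue for SW-categories (Theorem~\ref{additivity}), and for that it follows McCarthy's direct approach: one works with the bisimplicial set $\mc{E}\otimes_{S_\bullet(s,q)}\mc{C}^2$, reduces the equivalence of $(s,q)$ to showing that certain endomorphisms $E_n$ are homotopy equivalences, and then exhibits explicit simplicial homotopies $h_i$ built from pushouts. Nowhere does the paper (or McCarthy) pass through functorial additivity on the way to the $\mc{E}$-statement.

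Your reduction runs the logic in the opposite direction, and this is where the gap lies. In the standard development the implication goes \emph{from} the $\mc{E}$-statement \emph{to} functorial additivity: given a cofiber sequence $F'\hookrightarrow F\twoheadrightarrow F''$ of exact functors $\mc{A}\to\mc{B}$, one packages it as an exact functor $\mc{A}\to\mc{E}(\mc{B})=S_2\mc{B}$ and then applies the equivalence $wS_\bullet\mc{E}(\mc{B})\simeq wS_\bullet\mc{B}\times wS_\bullet\mc{B}$ already proved. Your sketch of an independent proof of functorial additivity (``a diagram in $S_2 S_n\mc{B}$, and the structure maps supply a chain of natural morphisms interpolating between $F(X)$ and $F'(X)\amalg F''(X)$'') does not stand on its own: there is in general no natural transformation in either direction between $F$ and $F'\amalg F''$, so no simple chain of morphisms exists, and the $2$-simplex you produce only relates $F$, $F'$, $F''$ to one another via face maps---extracting the homotopy $F\simeq F'\vee F''$ from that data is precisely the content of the additivity theorem. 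Invoking McCarthy here is also misplaced: his argument establishes the $\mc{E}$-statement directly, not functorial additivity. So as written your proposal is circular; to repair it you should prove the $(s,q)$ equivalence head-on by McCarthy's simplicial homotopy (or Waldhausen's original Theorem~B argument), and only afterwards read off the functorial form.
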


\subsection{Subtractive Categories}

In this section, we define ``categories with subtraction'', the minimal categorical input needed to define an analogue of the Waldhausen $S_\bullet$-construction. We go on to define ``subtractive categories'', which are more restrictive and which provide just enough structure to mimic standard proofs of the additivity theorem.  

\begin{defn}\label{cat_w_cofibs}
A \textbf{category with subtraction} is a category $\mc{C}$, equipped with a subcategory of cofibrations, $\mbf{co}(\mc{C})$ and a subcategory of fibrations, $\mbf{fib}(\mc{C})$. The arrows of $\mbf{co}(\mc{C})$ will be denoted by ``$\hookrightarrow$'' and those of $\mbf{fib}(\mc{C})$ will be denoted by $\xrightarrow{\circ}$. The following axioms must hold:
\begin{enumerate}
\item There is an initial object, typically referred to as the empty object, $\emptyset$. 
\item Isomorphisms are cofibrations and fibrations
\item (\textbf{pullbacks}) Pullbacks along cofibrations and fibrations exist, and satisfy base-change. 
\item   There is a notion of subtraction: that is, there is a collection of \textbf{subtraction sequences} $\{Z \hookrightarrow X \leftarrow Y\}$ which are required to satisfy the following axioms
  \begin{enumerate}
  \item $A \to A \amalg B \leftarrow B$ is a subtraction sequence
  \item Every cofibration $Z \hookrightarrow X$ participates in a subtraction sequence $Z \hookrightarrow X \leftarrow Y$ where $Y$ is unique up to unique isomorphism. The same statement holds for fibrations. We will informally denote $Y$ by $X-Z$. 
  \item Subtraction is functorial in fiber squares. Given a fiber square where all arrows are cofibrations, we can form the diagram below where all of the rows and columns are subtraction sequences
    \[
    \xymatrix{
    W  \ar@{^{(}->}[r]  \ar@{^{(}->}[d] & X \ar@{^{(}->}[d] & X - W \ar@{^{(}->}[d] \ar[l]_{\circ}\\
    Z  \ar@{^{(}->}[r] & Y & Y - Z \ar[l]_{\circ} & \\
    Z-W  \ar[u]_{\circ}\ar@{^{(}->}[r] & Y - X \ar[u]_{\circ} &  \square \ar[u]^{\circ}\ar[l]_{\circ}
  }
    \]
    where here $\square$ dentoes what would informally be called $(Y-Z)-(X-W)$ or $(Y-X)-(Z-W)$. In this diagram, we require that the arrows along the bottom and right of the diagram be uniquely determined and that the bottom right square be cartesian.

    The dual statement is required for fibrations.
    
  \item Subtraction is respected by base change. That is, given a subtraction sequence $Z \hookrightarrow X \xleftarrow{\circ} Y$ and a map $W \to X$ we can form the diagram where both squares are cartesian:
    \[
    \xymatrix{
      Z \ar@{^{(}->}[r] & X & Y \ar[l]_{\circ}\\
      Z \times_W X \ar@{^{(}->}[r]\ar[u] & W \ar[u]&  W \times_X Y \ar[l]_{\circ}\ar[u]
    }
    \]
    The bottom row is required to be a subtraction sequence. 
    \end{enumerate}
  \end{enumerate}
\end{defn}

\begin{rmk}
The definition of subtraction may seem somewhat odd, given that we don't specify what, exactly, the subtraction should be, only that it exist. It is, however, all that we need for any of the arguments below. It also leaves room for ``relative'' subtraction sequences, where given $\mc{C} \to \mc{D}$ an inclusion of categories with subtraction, we could define a new subtraction structure on $\mc{D}$ by declaring $Z \hookrightarrow X \leftarrow Y$ to be a subtraction sequence if $X - Z = Y \amalg C$ in the old structure. 
\end{rmk}

\begin{rmk}
The axiom for the functoriality of subtraction is necessitated by the fact that subtraction does not satisfy any good categorical properties: the intuitively suggested properties of subtraction must be inserted by fiat. 
\end{rmk}

\begin{rmk}
The definition bears a strong resemblance to the definition of exact categories in \cite{quillen}. 
\end{rmk}

There are a large number of examples of category with subtraction. The most important for us will be the following. 

\begin{example}\label{varieties_are_categories_with_subtraction}
  Let $X$ be a scheme. Then $\mbf{Sch}_{/X}$ or $\mbf{Var}_{/X}$ with cofibrations the closed inclusions are categories with subtraction. The cofibrations will be closed immersions and the subtraction sequences will be subtraction sequences of schemes or varieties Defn.\ref{subtraction_sequence_varieties}. It is clear that cofibrations and fibrations satisfy base change and that subtraction sequences also satisfy base change, thus $\mbf{Sch}_{/X}$ and $\mbf{Var}_{/X}$ are categories with subtraction.

\end{example}

\begin{example}
  Smooth schemes $\mbf{Sch}^{\text{sm}}_{/X}$ with cofibrations the closed inclusions or open inclusions.
\end{example}

Categories with subtraction are useful, but we will need a refinement of them in order to prove additivity.

\begin{defn}\label{subtractive_category}
  A \textbf{subtractive category}, $\mc{C}$, is a category with subtraction such that
  \begin{enumerate}
  \item (\textbf{pushouts}) The pushout of of a diagram where both legs are cofibrations exist and satisfy base change. Furthermore, cocartesian diagrams of this form are required to be cartesian
  \item (\textbf{pushout products}) In a cartesian square
    \[
    \xymatrix{
      W \ar[r]\ar[d] & X\ar[d] \\
      Y \ar[r] & Z
    }
    \]
    where all arrows are cofibrations, the map $X \amalg_W Y \to Z$ is a cofibration. 
  \item (\textbf{subtraction and pushouts}) Given a diagram
     \[
  \xymatrix{
    X '  \ar@{^{(}->}[d] & W' \ar@{_{(}->}[l]\ar@{^{(}->}[r] \ar@{^{(}->}[d] & Y'\ar@{^{(}->}[d]\\
    X & W \ar@{^{(}->}[r] \ar@{_{(}->}[l] & Y\\
    X'' \ar[u]^{\circ} & W'' \ar@{^{(}->}[r] \ar@{_{(}->}[l] \ar[u]^{\circ} & Y'' \ar[u]^{\circ}
  }
  \]
  where the columns are subtraction sequences and the top two squares are cartesian, then the pushouts along the rows form a subtraction sequence
  \[
  \xymatrix{
    X' \amalg_{W'} Y' \ar[r] & X \amalg_W Y & X'' \amalg_{W''} Y'' \ar[l]
    }
  \]

  \end{enumerate}
\end{defn}

We need an appropriate notion of functor between two subtractive categories.

\begin{defn}
  A functor $F: \mc{C} \to \mc{D}$ of subtractive categories is \textbf{exact} if
  \begin{enumerate}
  \item $F$ preserves the initial object: $F(\emptyset) = \emptyset$
  \item $F$ preserves subtraction sequences: If $X \hookrightarrow Z \leftarrow Y$ is a subtraction sequence, then
    \[
    F(X) \hookrightarrow F(Z) \leftarrow F(Y)
    \]
    is a subtraction sequence.
  \item $F$ preserves cocartesian diagrams. 
  \end{enumerate}
\end{defn}

\begin{rmk}
In Waldhausen's work, item 2 is subsumed by item 3. In our case quotients (i.e. pushouts along a map to the final object) and subtraction are not the same, so we must posit an extra condition. 
\end{rmk}

The work of Section 2 gives us the following. 

\begin{cor}
  $\mbf{Sch}_X$ and $\mbf{Var}_{/X}$ are subtractive categories with cofibrations the closed inclusions. 
\end{cor}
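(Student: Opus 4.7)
The plan is to verify the three additional axioms of Definition \ref{subtractive_category} that go beyond Definition \ref{cat_w_cofibs}, by citing the results of Section 2. Example \ref{varieties_are_categories_with_subtraction} already established that $\mbf{Sch}_{/X}$ and $\mbf{Var}_{/X}$, with cofibrations taken to be the closed immersions, are categories with subtraction. So it remains only to check the three items in Definition \ref{subtractive_category}: existence and base-change of pushouts along cofibrations (with cocartesian-implies-cartesian), the pushout-product property, and compatibility of subtraction with pushouts.

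For the pushout axiom, I would invoke Theorem \ref{pushouts_exist}, which gives existence of $X \amalg_Z Y$ for $Z \hookrightarrow X$, $Z \hookrightarrow Y$ closed immersions of schemes, together with the corollary following it that ensures the legs $X \to X \amalg_Z Y$ and $Y \to X \amalg_Z Y$ are themselves closed immersions, hence cofibrations. For the variety case one additionally applies Proposition \ref{pushout_varieties} to see that the pushout stays within $\mbf{Var}_{/X}$. The cocartesian-is-cartesian condition is recorded in Remark \ref{pushout_pullback}. Base change for pushouts along closed immersions then reduces, after covering by affines, to the elementary fact that the fiber-product ring construction $B \times_{A/I} A$ of Theorem \ref{pushouts_exist} commutes with pullback.

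Axiom (2), the pushout-product property, is precisely Proposition \ref{pushout_product}: a cartesian square of closed immersions induces a closed immersion $X \amalg_W Y \hookrightarrow Z$. Axiom (3), the compatibility of subtraction sequences with pushouts, is the exact content of Proposition \ref{var_sub_pushout}. Assembling these verifications completes the proof.

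There is essentially no genuine obstacle here: the corollary is a bookkeeping statement packaging the scheme-theoretic preliminaries of Section 2. The real work was carried out earlier, especially in establishing Propositions \ref{pushout_product} and \ref{var_sub_pushout}; this corollary simply records that the axioms of a subtractive category have been checked for varieties and schemes. The only mildly subtle point to articulate carefully is base-change for pushouts along cofibrations, which I would spell out in one line by reducing to the affine description of the pushout.
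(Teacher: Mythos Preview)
Your proposal is correct and matches the paper's approach exactly: both proceed by citing Example \ref{varieties_are_categories_with_subtraction} for the underlying category-with-subtraction structure and then invoking Theorem \ref{pushouts_exist}, Proposition \ref{pushout_product}, and Proposition \ref{var_sub_pushout} for the three additional axioms of Definition \ref{subtractive_category}. If anything you are slightly more thorough than the paper, explicitly noting Proposition \ref{pushout_varieties} for the variety case and Remark \ref{pushout_pullback} for the cocartesian-implies-cartesian clause.
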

\begin{proof}
  First, these are all categories with subtraction by Example \ref{varieties_are_categories_with_subtraction}.
  
Furthermore, pushouts diagrams where both legs are closed immersions exist Thm. \ref{pushouts_exist}, the pushout product axiom holds Prop.\ref{pushout_product}, and the final axiom regarding subtraction and pushout holds Prop. \ref{var_sub_pushout}. Thus $\mbf{Sch}_{/X}$ and $\mbf{Var}_{/X}$ are subtractive categories. 
\end{proof}

\begin{rmk}
Note that $\mbf{Sch}^{sm}_{/X}$ is \textit{not} a subtractive category, as pushing out along closed inclusions introduces singularities. 
\end{rmk}

As in Waldhausen \cite[Lem. 1.1.1] {waldhausen}, we will proceed to show that the arrow category $F_1 \mc{C}$ of a subtractive category is also a subtractive category.

\begin{defn}
Let $\mc{C}$ be a subtractive category. Let $F_1 \mc{C}$ denote the category with objects cofibrations $Z \hookrightarrow X$ and morphisms cartesian diagrams. 
\end{defn}

\begin{prop}
Let $f: (W \hookrightarrow X) \to (Y \hookrightarrow Z)$ be a map in $F_1 \mc{C}$. We define $f$ to be a cofibration if $W \to Y$ and $X \to Z$ are. This turns $F_1 \mc{C}$ into a subtractive category. 
\end{prop}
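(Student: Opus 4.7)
The plan is to lift the entire subtractive structure on $\mc{C}$ pointwise to $F_1\mc{C}$, so that every axiom reduces to the corresponding axiom of $\mc{C}$ applied at the ``source'' and ``target'' level of the arrow-category objects. The initial object will be $(\emptyset \hookrightarrow \emptyset)$; a morphism $(W \hookrightarrow X) \to (Y \hookrightarrow Z)$ is declared a \textbf{fibration} when both $W \to Y$ and $X \to Z$ lie in $\mbf{fib}(\mc{C})$, and a cofibration by the hypothesis of the proposition. Because composition of cartesian squares is cartesian and base change preserves cofibrations and fibrations in $\mc{C}$, these subcollections form subcategories of $F_1\mc{C}$. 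Pullbacks along cofibrations (resp.\ fibrations) are constructed componentwise: given $(W' \hookrightarrow X') \hookrightarrow (Y \hookrightarrow Z)$ and $(W \hookrightarrow X) \to (Y \hookrightarrow Z)$, form $W \times_Y W' \hookrightarrow X \times_Z X'$ using pullbacks in $\mc{C}$ at each level, which is an object of $F_1\mc{C}$ because the pullback of a cofibration is a cofibration.

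Subtraction sequences in $F_1\mc{C}$ are also defined pointwise: the datum $(W \hookrightarrow X) \hookrightarrow (Y \hookrightarrow Z) \xleftarrow{\circ} (Y{-}W \hookrightarrow Z{-}X)$ is a subtraction sequence when the two rows $W \hookrightarrow Y \xleftarrow{\circ} Y{-}W$ and $X \hookrightarrow Z \xleftarrow{\circ} Z{-}X$ are subtraction sequences in $\mc{C}$. The key point is that $Y{-}W \hookrightarrow Z{-}X$ is a cofibration in $\mc{C}$, which is \emph{exactly} the content of Def.~\ref{cat_w_cofibs}(4c) applied to the cartesian square of cofibrations defining the given cofibration in $F_1\mc{C}$. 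The square
\[
\xymatrix{
  Y{-}W \ar[r]^-\circ \ar@{^{(}->}[d] & Y \ar@{^{(}->}[d] \\
  Z{-}X \ar[r]^-\circ & Z
}
\]
is cartesian by base change of subtraction (Def.~\ref{cat_w_cofibs}(4d)) applied to $X \hookrightarrow Z \xleftarrow{\circ} Z{-}X$ along $Y \hookrightarrow Z$ (using that $X \times_Z Y = W$), so the rightmost entry is indeed a legitimate fibration $(Y{-}W \hookrightarrow Z{-}X) \xrightarrow{\circ} (Y \hookrightarrow Z)$ in $F_1\mc{C}$. Uniqueness of the subtraction at each level gives uniqueness in $F_1\mc{C}$. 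Functoriality of subtraction in fiber squares and base change of subtraction in $F_1\mc{C}$ follow by invoking the same axioms in $\mc{C}$ separately at the source and target level of an $F_1\mc{C}$-fiber square, which is nothing but a compatible pair of fiber squares in $\mc{C}$.

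To upgrade this to a subtractive category we need the three extra axioms of Def.~\ref{subtractive_category}. Pushouts of cofibrations are formed pointwise; the result again lies in $F_1\mc{C}$ because cocartesian squares of cofibrations in $\mc{C}$ are cartesian by Def.~\ref{subtractive_category}(1), so the induced map between pointwise pushouts is a cofibration by the pushout-product axiom in $\mc{C}$. The pushout-product and subtraction-pushout axioms in $F_1\mc{C}$ then follow by applying the same axioms of $\mc{C}$ at each of the two levels.

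The main obstacle is bookkeeping for the subtraction-pushout axiom: one must check that the $3 \times 3$ diagram of Def.~\ref{subtractive_category}(3) in $F_1\mc{C}$ is exactly a pair of such diagrams in $\mc{C}$ that fit together along the cartesian squares encoding the $F_1\mc{C}$-morphisms, and that pointwise subtraction of pointwise pushouts is again a subtraction sequence in $F_1\mc{C}$. Beyond this diagrammatic verification, there is no substantive difficulty: every condition on $F_1\mc{C}$ decomposes into the corresponding pair of conditions on $\mc{C}$, which are guaranteed by the hypothesis that $\mc{C}$ is subtractive.
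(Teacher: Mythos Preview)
Your proposal is correct and takes essentially the same approach as the paper: lift the subtractive structure pointwise to $F_1\mc{C}$, using axiom~(4c) of Def.~\ref{cat_w_cofibs} to produce subtractions, and the axioms of Def.~\ref{subtractive_category} levelwise for the rest. Your write-up is in fact more detailed than the paper's (which dispatches the subtractive-category axioms in three sentences); the only cosmetic difference is that the paper cites the subtraction--pushout axiom rather than the pushout-product axiom to see that the map between levelwise pushouts is a cofibration, but either route works.
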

\begin{proof}
  First, the cofibrations form a category. This is clear by the usual properties of pullbacks. The category $F_1 \mc{C}$  has an initial object $(\emptyset \to \emptyset)$ and the isomorphisms are cofibrations. Pullbacks exist and are defined point-wise and are easily seen to satisfy cobase change. Furthermore, subtractions exist by the pullback axiom: given a diagram below we consider the left square to be a cofibration in $F_1 \mc{C}$ and the right vertical map will be the corresponding subtraction guaranteed by the axioms:
  \[
  \xymatrix{
    Z \ar@{^{(}->}[d] \ar@{^{(}->}[r] & X \ar@{^{(}->}[d] & X - Z \ar@{^{(}->}[d]\ar[l]\\
    Z' \ar@{^{(}->}[r] & X' & X'-Z' \ar[l]
  }
  \]

  This proves that $F_1 \mc{C}$ is a category with subtraction.

  To see that it is a subtractive category, we note that pushouts can be defined point-wise. The map produced by the pushout is a cofibration by the subtraction and pushout axiom. Pushout product follows from the definition of pullback, and the pushout product in $\mc{C}$. The interaction of subtraction and pushout follows easily, though tediously, from the work above.

\end{proof}

For future use, we introduce one more new category. 

\begin{defn}
  Let $F^+_1 \mc{C}$ denote the category whose objects are subtraction sequences $Z \hookrightarrow X \xleftarrow{\circ} Y$ in $\mc{C}$ and morphisms are diagrams
  \begin{equation}\label{morphism_f1C}
  \xymatrix{
    Z \ar@{^{(}->}[r]\ar[d]  & X \ar[d] & Y \ar[l]_\circ \ar[d] \\
    Z'\ar@{^{(}->}[r]  & X' & Y' \ar[l]_\circ
  }
  \end{equation}
  where both squares are cartesian. 
\end{defn}

\begin{defn}\label{stq}
  We define three functors $s, t, q: F^+_1 \mc{C} \to \mc{C}$ on objects
  \begin{enumerate}
  \item $s (Z \hookrightarrow X \leftarrow Y) = Z$
  \item $t (Z \hookrightarrow X \leftarrow Y) = X$
  \item $q (Z \hookrightarrow X \leftarrow Y) = Y$
  \end{enumerate}
\end{defn}

\begin{lem}
The functors $s, t, q$ are exact. 
\end{lem}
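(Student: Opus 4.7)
The plan is to first make explicit the ``componentwise'' subtractive structure on $F^+_1 \mc{C}$ that is implicit in the statement, then verify the three clauses in the definition of exact functor directly. Declare a morphism in $F^+_1 \mc{C}$ of the form (\ref{morphism_f1C}) to be a cofibration (respectively, fibration) when each of the three vertical arrows $Z \to Z'$, $X \to X'$, $Y \to Y'$ is a cofibration (respectively, fibration) in $\mc{C}$. Similarly, declare a diagram
\[
(Z_0 \hookrightarrow X_0 \leftarrow Y_0) \hookrightarrow (Z_1 \hookrightarrow X_1 \leftarrow Y_1) \leftarrow (Z_2 \hookrightarrow X_2 \leftarrow Y_2)
\]
to be a subtraction sequence in $F^+_1 \mc{C}$ precisely when each of its three rows $Z_0 \hookrightarrow Z_1 \leftarrow Z_2$, $X_0 \hookrightarrow X_1 \leftarrow X_2$, $Y_0 \hookrightarrow Y_1 \leftarrow Y_2$ is a subtraction sequence in $\mc{C}$. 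The initial object is $(\emptyset \hookrightarrow \emptyset \leftarrow \emptyset)$.

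With this structure, each of $s, t, q$ is exact essentially by unwinding definitions. Each functor sends the initial object of $F^+_1 \mc{C}$ to $\emptyset$; each preserves subtraction sequences, since these were defined rowwise so that the $s$-row, $t$-row, and $q$-row are subtraction sequences in $\mc{C}$ by fiat; and each preserves cocartesian diagrams, since pushouts of cofibrations in $F^+_1 \mc{C}$ are computed componentwise.

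The one point that requires genuine input --- and is the only real obstacle --- is verifying that $F^+_1 \mc{C}$ really is a subtractive category, specifically that the componentwise pushout of a diagram of cofibrations in $F^+_1 \mc{C}$ is again an object of $F^+_1 \mc{C}$. That is, given two cofibrations $(Z \hookrightarrow X \leftarrow Y) \hookrightarrow (Z_i \hookrightarrow X_i \leftarrow Y_i)$ for $i = 1, 2$, the componentwise pushouts
\[
Z_1 \amalg_Z Z_2 \hookrightarrow X_1 \amalg_X X_2 \leftarrow Y_1 \amalg_Y Y_2
\]
must still form a subtraction sequence. This is precisely axiom 3 of Def.\ \ref{subtractive_category} (the ``subtraction and pushouts'' axiom, established for varieties in Prop.\ \ref{var_sub_pushout}), applied after rearranging the data into the nine-term diagram appearing there. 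Once this is granted, the pushout object clearly satisfies the universal property among maps of subtraction sequences, and the three projections $s, t, q$ preserve it by construction, completing the verification of exactness.
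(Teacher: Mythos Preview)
Your proof is correct and uses the same mathematical content as the paper, but the organization differs. The paper's proof is terser: it notes that $s$ and $t$ require no argument and verifies exactness of $q$ directly, invoking the base-change axiom for subtraction sequences (Defn.~\ref{cat_w_cofibs}(4d)) to show $q$ preserves subtraction sequences, and Defn.~\ref{subtractive_category} Axiom~3 to show $q$ preserves cocartesian diagrams. You instead front-load the work by making the levelwise subtractive structure on $F^+_1 \mc{C}$ explicit, after which exactness of all three functors becomes tautological; the substantive verifications are then absorbed into checking that $F^+_1 \mc{C}$ really is subtractive. You correctly identify Axiom~3 as the key input for the pushout part. One small gap in your write-up: you do not mention that componentwise \emph{subtractions} of cofibrations in $F^+_1 \mc{C}$ land back in $F^+_1 \mc{C}$ (i.e., that $(Z'-Z) \hookrightarrow (X'-X) \leftarrow (Y'-Y)$ is again a subtraction sequence), which is where the functoriality-in-fiber-squares axiom (Defn.~\ref{cat_w_cofibs}(4c)) or the base-change argument the paper uses would enter. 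This is routine, but it is the counterpart in your setup of the paper's appeal to ``preservation of subtraction sequences under pullback.''
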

\begin{proof}
  Only the fact that $q$ is exact requires proof. First, $q$ takes cofibrations to cofibrations. To see this, note that a cofibration is a diagram such as (\ref{morphism_f1C}) where all vertical arrows are cofibrations. That $q$ takes subtraction sequences to subtraction sequences is a consequence of the preservation of subtraction sequences under pullback. That $q$ preserves cocartesian diagrams is exactly Defn. \ref{subtractive_category} Axiom 3.
\end{proof}

\subsection{$SW$-categories and $\widetilde{S}_\bullet$}

We finally introduce a modification of Waldhausen's $S_\bullet$-construction. Before doing so, we define a type of subtractive category where we allow for the presence of weak equivalence. In the main example in this paper, the category $\mbf{Var}_{/k}$, the weak equivalences will simply be the isomorphisms. We introduce the definition below in order to allow for weaker notions of equivalence (e.g. birational equivalence) in future work. 

\begin{defn}
  An \textbf{SW-category} (subtractive Waldhausen category) is a subtractive category equipped with a category of weak equivalences, $w \mc{C}$,  such that
  \begin{enumerate}
  \item The isomorphisms are contained in $w\mc{C}$
  \item Gluing holds: Given the diagram where all horizontal arrows are cofibrations
    \[
    \xymatrix{
      Y\ar[d]_{\simeq} & X\ar[d]_{\simeq} \ar@{_{(}->}[l]\ar@{^{(}->}[r] & Z\ar[d]_{\simeq}\\
      Y' & X'\ar@{_{(}->}[l]\ar@{^{(}->}[r] & Z'
      }
    \]
    we have
    \[
    Y \amalg_X Z \simeq Y' \amalg_{X'} Z'
    \]
  \item Subtraction is respected: If we have a commuting square
    \[
    \xymatrix{
      X \ar@{^{(}->}[r] \ar[d]_{\simeq} & Y \ar[d]^{\simeq}\\
      X' \ar@{^{(}->}[r] & Y'
    }
      \]
      then there is an induced weak equivalence $X - Y \xrightarrow{\simeq} X' - Y'$. 
  \end{enumerate}
\end{defn}

\begin{defn}
 A functor $F: \mc{C} \to \mc{D}$ between SW-categories is \textbf{exact} if $F$ preserves weak equivalences and $F$ is exact as a functor of subtractive categories.  
\end{defn}

\begin{rmk}
Note that for any subtractive category $\mc{C}$, if we declare the isomorphisms in $\mc{C}$ to be the weak equivalences, we obtain an SW-category.
\end{rmk}

Now, the development above proves

\begin{prop}\label{var_SW_cat}
Let $X$ be a scheme and let $\mbf{Var}_{/X}$ be the category of separated, finite-type schemes over $X$. Then $\mbf{Var}_{/X}$ is an SW-category with cofibrations the closed immersions and weak equivalences the isomorphisms of schemes. 
\end{prop}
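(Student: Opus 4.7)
The plan is to show that the three SW-axioms are automatic once we take the weak equivalences to be the isomorphisms, since the subtractive structure on $\mathbf{Var}_{/X}$ has already been established in the preceding corollary (from Thm.~\ref{pushouts_exist}, Prop.~\ref{pushout_product}, and Prop.~\ref{var_sub_pushout}). So my first step is simply to invoke that corollary to get the subtractive structure, together with the observation (made in the remark just before Prop.~\ref{var_SW_cat}) that the isomorphisms always form a legitimate class of weak equivalences on any subtractive category; thus the bulk of the verification is already done.

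Next I would check the three SW-axioms one at a time. Axiom (1) is tautological since isomorphisms lie in the subcategory of isomorphisms. For the gluing axiom (2), I would observe that given the diagram of two spans connected by vertical isomorphisms, the induced map $Y \amalg_X Z \to Y' \amalg_{X'} Z'$ on pushouts is an isomorphism by the universal property of pushouts: construct the inverse by pushing out the inverse vertical isomorphisms, and use uniqueness in the universal property to see the two composites are the identity. Nothing scheme-theoretic is required here beyond the existence of the pushouts, which is guaranteed by Thm.~\ref{pushouts_exist} because all horizontal arrows are cofibrations (closed immersions).

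For axiom (3), I would argue as follows. Given a commutative square
\[
\xymatrix{
X \ar@{^{(}->}[r]\ar[d]_{\cong} & Y \ar[d]^{\cong}\\
X' \ar@{^{(}->}[r] & Y'
}
\]
with vertical isomorphisms, the induced map $Y - X \to Y' - X'$ is well-defined and an isomorphism. This can be seen either directly from the set-theoretic description of subtraction in Defn.~\ref{subtraction_sequence_varieties} (the vertical iso $Y \xrightarrow{\cong} Y'$ restricts to a homeomorphism of open complements of the closed subsets $X(Z), X'(Z)$, and is compatible with structure sheaves), or more slickly by observing that the square is automatically cartesian (since vertical maps are iso) and applying Prop.~\ref{subtractions_pullback}, which yields an iso on the complements by uniqueness of the subtraction up to unique isomorphism.

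The hardest step, if any, is axiom (3); but since isomorphisms are cartesian squares for free, this is really just a matter of unpacking the subtraction functoriality already built into the subtractive structure. No genuinely new geometric input is required, and this is why the proposition was stated almost as an afterthought: the real work is in Section~2, not in this verification.
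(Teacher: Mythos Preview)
Your proposal is correct and matches the paper's approach: the paper has no explicit proof here, simply writing ``the development above proves'' before stating the proposition, relying on the earlier corollary that $\mathbf{Var}_{/X}$ is a subtractive category together with the remark that taking weak equivalences to be isomorphisms always yields an SW-category. Your write-up just makes explicit the verification of the three SW-axioms that the paper leaves to the reader via that remark.
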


We proceed to give the version of Waldhausen's construction of $K$-theory appropriate to SW-categories. This will be a modification of his $S_\bullet$-construction.

To cleanly state the construction we need to define a useful indexing category.

\begin{defn}
  Let $[n]$ denote the ordered set $\{0, \dots, n\}$ considered as a category, i.e. there is a map $i \to j$ if $i \leq j$. Define $\widetilde{\operatorname{Ar}}[n]$ to be the full subcategory of $[n]^{\text{op}} \times [n]$ consisting of pairs $(i, j)$ with $i \leq j$. 
\end{defn}

\begin{example}
  $\widetilde{\operatorname{Ar}}[2]$ may be visualized as
  \[
  \xymatrix{
    (0,0) \ar[r] & (0,1)\ar[r] & (0,2)\\
    & (1,1)\ar[u] \ar[r] & (1,2)\ar[u]\\
    &              & (2,2) \ar[u]
    }
  \]
  and will be referred to colloquially as ``flags'' below. 
\end{example}

\begin{defn}[$\widetilde{S}_\bullet$-construction]\label{s_dot}
   Let $\mc{C}$ be an SW-category. We define $\widetilde{S}_n \mc{C}$ to be the set of functors
  \[
  X: \widetilde{\operatorname{Ar}}[n] \to \mc{C}
  \]
  subject to the conditions
  \begin{itemize}
  \item $X_{i,i} = \emptyset$, the empty variety.
  \item Every $X_{i,j} \to X_{i,k}$ where $j < k$ is a cofibration.
  \item  The sub-diagram
  \[
  X_{i,j} \to X_{i,k} \leftarrow X_{j, k}
  \]
  is a subtraction sequence.
\item  For $i < j < k < l$, the subdiagram
  \[
  \xymatrix{
    X_{ik} \ar@{^{(}->}[r] & X_{il}\\
    X_{jk} \ar@{^{(}->}[r]\ar[u]^{\circ} & X_{jl} \ar[u]^{\circ}
  }
  \]
  is cartesian. 
  \end{itemize}
  This defines a simplicial set as follows. The face maps are
  \begin{enumerate}
  \item $d_0: \widetilde{S}_n \mc{C} \to \widetilde{S}_{n-1} \mc{C}$ is given by removing the first row. 
  \item $d_k: \widetilde{S}_n \mc{C} \to \widetilde{S}_{n-1} \mc{C}$ is given by deleting the $k$th row and column and composing the remaining maps. 
  \end{enumerate}
  The $i$th degeneracy maps are given by inserting identity maps $X_{i,j} \xrightarrow{=} X_{i, j}$ for all $j$. From this it is clear that the simplicial relations hold.
\end{defn}

In fact, $\widetilde{S}_\bullet \mc{C}$ can be considered as a simplicial category (i.e. a simplicial object in categories). First, we introduce some notation. Let $i_0: [n] \hookrightarrow \widetilde{\operatorname{Ar}}[n]$ be given by $j \mapsto (0, j)$. 

\begin{defn}
We consider $\widetilde{S}_n \mc{C}$ as a category as follows. The objects are the functors $X: \widetilde{\operatorname{Ar}}[n] \to \mc{C}$ as above and the morphisms are functors $Y: \widetilde{\operatorname{Ar}}[n] \times [1] \to \mc{C}$ with the additional restriction that all squares in $i^\ast_0 Y : [n] \times [1]\to \mc{C}$ are cartesian. Composition is given in the obvious way. 
\end{defn}

\begin{rmk}
The requirement that the squares $[n]\times[1] \to \mc{C}$ be cartesian is a consequence of the fact that we only have functoriality of subtraction with respect to cartesian squares. 
\end{rmk}

\begin{rmk}
This makes $\widetilde{S}_\bullet \mc{C}$ into a simplicial \textit{category}. 
\end{rmk}

The category $\widetilde{S}_n \mc{C}$, built from a subtractive category, can itself be given the structure of a subtractive category. 

\begin{lem}
The category $\widetilde{S}_n \mc{C}$ is a subtractive category. The category of cofibrations is given by functors $Y: \widetilde{\operatorname{Ar}}[n] \times [1] \to \mc{C}$ such that the restriction $Y((i,j)) : [1] \to \mc{C}$ are cofibrations. The category of fibrations are given similarly. The subtraction sequences are level-wise subtraction sequences. 
\end{lem}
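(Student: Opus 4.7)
The plan is to check each axiom of Definition~\ref{cat_w_cofibs} (category with subtraction) and of Definition~\ref{subtractive_category} (subtractive category) for $\widetilde{S}_n \mc{C}$, reducing each statement to the corresponding statement in $\mc{C}$ applied vertex-by-vertex over $\widetilde{\operatorname{Ar}}[n]$. The initial object is the constant functor at $\emptyset$; isomorphisms are levelwise, so lie in both $\mbf{co}$ and $\mbf{fib}$.

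I would construct all of pullbacks, subtractions, and pushouts pointwise: $(X\times_Z Y)_{ij} := X_{ij}\times_{Z_{ij}} Y_{ij}$, $(Y-X)_{ij} := Y_{ij}-X_{ij}$, and $(X\amalg_W Y)_{ij} := X_{ij}\amalg_{W_{ij}} Y_{ij}$. In each case I need to show that (a) the resulting assignment extends to a functor $\widetilde{\operatorname{Ar}}[n]\to \mc{C}$, (b) it satisfies the four flag conditions defining an object of $\widetilde{S}_n\mc{C}$, and (c) the various categorical universal properties transfer from $\mc{C}$ to $\widetilde{S}_n\mc{C}$. Once this is established, the axioms of a category with subtraction (existence of pullbacks, uniqueness of subtraction, functoriality of subtraction on cartesian cofibration squares, and base change for subtraction) all reduce to the corresponding axioms in $\mc{C}$ applied to each $(i,j)$. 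Likewise, the pushout axiom, pushout-product, and subtraction-pushout-interaction axioms of a subtractive category follow from their $\mc{C}$-level counterparts (Prop.~\ref{pushout_product} and the third bullet of Defn.~\ref{subtractive_category}) applied levelwise, using Remark~\ref{pushout_pullback} to keep track of when pushouts are also pullbacks.

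The main obstacle, and where the most care is required, is verifying the four flag conditions for the pointwise constructions. The empty-object condition is trivial. For cofibrations and subtraction sequences along rows, one uses that the definition of a morphism in $\widetilde{S}_n\mc{C}$ forces the squares along the $0$-th row to be cartesian, and then propagates cartesianness to all squares $X_{ij}\to X_{ik},\, Y_{ij}\to Y_{ik}$ by applying base change of subtraction sequences (Defn.~\ref{cat_w_cofibs}(4d)) to the sequence $Y_{0i}\hookrightarrow Y_{0j}\leftarrow Y_{ij}$ along $X_{0j}\to Y_{0j}$ and invoking the uniqueness of the complement to identify the result with $X_{ij}$. With these cartesian squares in hand, functoriality of subtraction (Defn.~\ref{cat_w_cofibs}(4c)) supplies the needed cofibrations $(Y-X)_{ij}\hookrightarrow (Y-X)_{ik}$ and subtraction sequences on rows.

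The cartesian-square condition for $i<j<k<l$ is the most delicate piece. For the subtraction case, one applies the subtraction-functoriality axiom twice (to the cartesian square $X_{jk}\to X_{jl},\,X_{ik}\to X_{il}$ and its $Y$-analogue), using that subtraction of a fiber square of cofibrations yields a fiber square; for the pushout case, one combines Prop.~\ref{var_sub_pushout} with the fact that cocartesian squares of cofibrations are cartesian (Rmk.~\ref{pushout_pullback}) to see that a pointwise pushout of cartesian flags remains cartesian. Once these bookkeeping steps are complete, all of the subtractive-category axioms for $\widetilde{S}_n\mc{C}$ are immediate.
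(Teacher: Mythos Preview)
Your proposal is correct and follows exactly the pointwise approach the paper intends: the paper's own proof is the single sentence ``This is entirely analogous to the proof for $F_1\mc{C}$,'' and that proof in turn is a brief sketch constructing pullbacks, subtractions, and pushouts levelwise. You have in fact supplied considerably more detail than the paper---in particular the propagation of cartesianness from the $0$-th row to all squares via Defn.~\ref{cat_w_cofibs}(4d), and the verification of the flag conditions for the pointwise pushout and subtraction---which the paper leaves entirely implicit.
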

\begin{proof}
This is entirely analgous to the proof for $F_1 \mc{C}$. 
\end{proof}

\begin{rmk}
This will allow us to iterate the $S_\bullet$-construction. 
\end{rmk}

We can finally define our \textit{space} $K(\mc{C})$ --- more machinery is needed to prove that it may be delooped.

\begin{defn}
Let $\mc{C}$ be an SW-category. We define the space $\underline{K}(\mc{C})$ to be $\Omega |w_\bullet \widetilde{S}_\bullet \mc{C}|$, where $|-|$ denote the simplicial realization of a bisimplicial set. Here, $w \mc{C}$ denotes the subcategory of all objects with maps weak equivalences, and $w_\bullet \mc{C}$ denotes the simplicial nerve of that category. 
\end{defn}

Of course, the salient property of this space holds. 

\begin{prop}
$\pi_0 \underline{K}(\mc{C}) = K_0 (\mc{C})$. 
\end{prop}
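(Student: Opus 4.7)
Since $\underline{K}(\mc{C}) = \Omega |w_\bullet \widetilde{S}_\bullet \mc{C}|$, we have $\pi_0 \underline{K}(\mc{C}) = \pi_1 |w_\bullet \widetilde{S}_\bullet \mc{C}|$ once we know the realization is connected and pointed. The plan is to verify connectivity and then present $\pi_1$ by generators and relations, exactly as in Waldhausen's original computation of $\pi_0$ of $K$-theory, with cofiber sequences replaced everywhere by subtraction sequences.

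For connectivity I would observe that $\widetilde{S}_0 \mc{C}$ is a single object (the functor $\widetilde{\operatorname{Ar}}[0] \to \mc{C}$ sending $(0,0) \mapsto \emptyset$), so the bisimplicial set has a single vertex $\ast$. For any object $X \in \mc{C}$ the evident functor $\widetilde{\operatorname{Ar}}[1] \to \mc{C}$ with $X_{01} = X$ is an object of $\widetilde{S}_1 \mc{C}$ whose two face maps both land on $\ast$, so $X$ realizes as a loop at $\ast$. Every higher simplex can be connected to $\ast$ via faces through such loops, and $|w_\bullet \widetilde{S}_\bullet \mc{C}|$ is path-connected.

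For the presentation of $\pi_1$, I would work with the diagonal simplicial set and use the standard recipe that $\pi_1 |X|$ with one vertex is generated by $1$-simplices modulo relations imposed by $2$-simplices via the identity $d_1 = d_0 \cdot d_2$. The $1$-simplices of the diagonal come (up to degeneracy) from the objects of $\widetilde{S}_1 \mc{C}$, which are canonically the objects of $\mc{C}$, so the generators are initially isomorphism classes $[X]$ of objects of $\mc{C}$. Two families of $2$-simplices give the relations. First, a weak equivalence $X \weakequiv Y$ in $w\widetilde{S}_1 \mc{C}$, combined with appropriate degeneracies, produces a $2$-simplex in the diagonal whose face identities force $[X] = [Y]$, cutting generators down to weak-equivalence classes. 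Second, an object of $\widetilde{S}_2 \mc{C}$ is literally a subtraction sequence $X_{01} \hookrightarrow X_{02} \xleftarrow{\circ} X_{12}$, and the three face maps of $\widetilde{S}_\bullet$ evaluate to $d_2 = X_{01}$, $d_1 = X_{02}$, $d_0 = X_{12}$. The simplicial identity therefore forces $[X_{02}] = [X_{01}] + [X_{12}]$ in $\pi_1$, which is precisely the defining relation of $K_0(\mc{C})$. (Commutativity of $\pi_1$ is automatic from the loop-space structure, and is also visible directly from the pair of subtraction sequences $A \hookrightarrow A \amalg B \leftarrow B$ and $B \hookrightarrow A \amalg B \leftarrow A$.) Matching generators and relations gives $\pi_1 |w_\bullet \widetilde{S}_\bullet \mc{C}| \cong K_0(\mc{C})$.

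The main bookkeeping point, and the only place where our setting differs nontrivially from the classical Waldhausen one, is to check that the extra cartesian-square constraint built into morphisms of $\widetilde{S}_n \mc{C}$ does not create spurious identifications or fail to impose needed ones when comparing weak equivalences in $\mc{C}$ and in $\widetilde{S}_1 \mc{C}$. In the primary case $\mc{C} = \mbf{Var}_{/k}$ with weak equivalences the isomorphisms this is automatic, since any naturality square of an isomorphism is cartesian. In general one uses the compatibility of weak equivalences with base change and subtraction built into the definition of an SW-category. Once these identifications are made, the classical calculation transports verbatim and yields the proposition.
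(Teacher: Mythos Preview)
Your proposal is correct and follows essentially the same approach as the paper: compute $\pi_1$ of the realization via the standard generators-and-relations recipe, identifying $1$-simplices with objects of $\mc{C}$ and reading off from $\widetilde{S}_2 \mc{C}$ that each subtraction sequence imposes $[X_{02}] = [X_{01}] + [X_{12}]$. You supply more detail than the paper does---the connectivity check, the explicit handling of weak-equivalence relations, and the remark on the cartesian-square constraint in morphisms---but the core argument is the same.
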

\begin{proof}
  This follows by standard methods; see, for example, \cite{weibel}. For any simplicial space (or bisimplicial set) $X_\bullet$, we can compute $\pi_1 |X_\bullet|$ via generators and relations:
  \[
  \pi_1 |X_\bullet| =  \langle \pi_0 (X_1) \rangle / (d_1 (x) = d_2 (x) + d_0 (x)) \qquad x \in \pi_0 (X_2). 
  \]
  Here our simplicial space is $X_n = |i_\bullet \widetilde{S}_n \mc{C}|$. Therefore, $\pi_0 (X_1)$ is the set of equivalence classes of varieties up to isomorphism. Also, $X_2$ is the set of equivalences classes of subtraction sequences. For a subtraction $X \hookrightarrow Y \leftarrow X - Y$, call it $c$, $d_0 (c) = Y- X$, $d_1 (c) = Y$ and $d_2 (c) = X$. Therefore, the relations are
  \[
  [Y] = [X] + [Y-X]. 
  \]
\end{proof}

\begin{rmk}
We can define $\underline{K}(\mc{C})$ for $\mc{C}$ any category with subtraction. We have not yet used any other structure. However, in order that the space $\underline{K}(\mc{C})$ deloop to a spectrum $K(\mc{C})$, we will need $\mc{C}$ to be a subtractive or SW-category. 
\end{rmk}

We now produce $K(\mc{C})$ as a symmetric spectrum by iterating the $\widetilde{S}_\bullet$ construction in an appropriate way; that is, the following is what one gets if we consider $\widetilde{S}_\bullet \mc{C}$ as an SW-category and iterate the $\widetilde{S}_\bullet$-construction. We will show in the subsequent section that this is a quasi-fibrant symmetric spectrum \cite{hovey_shipley_smith, mmss}.

\begin{defn} \label{iterated_S_dot_construction}
  Let $\mc{C}$ be an SW-category. We consider the category of functors
  \[
  F : \widetilde{\text{Ar}}[n_1] \times \cdots \times \widetilde{\text{Ar}}[n_k] \to \mc{C}.
  \]
  and write each object of $\widetilde{\text{Ar}}[n_\ell]$ as $(i_\ell, j_\ell)$. Let $S^k_{n_1, \dots, n_k} \mc{C}$ be the full subcategory consisting of functors $F$ such that
  \begin{enumerate}
  \item $F ((i_1, j_1), \dots, (i_k, j_k)) = \ast$ whenever $i_\ell = j_\ell$ for some $\ell$.
  \item The subfunctor $F((0, i_1), \dots, (0,i_k)): [n_1] \times \cdots \times [n_k] \to \mc{C}$ defines a cube such that every sub-face is cartesian. 
  \item Given $((i_1, j_1), \dots, (i_k, j_k))$ in $ \widetilde{\text{Ar}}[n_1] \times \cdots \times \widetilde{\text{Ar}}[n_k]$ and $1 \leq \ell \leq k$ and every $j_\ell \leq m \leq n_\ell$ the sequence
    \[
    \xymatrix{
      F((i_1, j_1), \dots, (i_\ell, j_\ell), \dots, (i_k, j_k)) \ar[r] &  F((i_1, j_1), \dots, (i_\ell, m), \dots, (i_k, j_k))\\
       & F((i_1, j_1), \dots, (j_\ell, m), \dots, (i_k, j_k)) \ar[u] 
      }
    \]
    is a subtraction sequence. 
  \item Given $i_l < n < j_l < m$ the diagram
    \[
    \xymatrix{
    F((i_1, j_1), \dots, (i_l, j_l), \dots, (i_k, j_k)) \ar[r] & F((i_1, j_1), \dots, (i_l, m), \dots, (i_k, j_k))\\
    F((i_1, j_1), \dots, (n, j_l), \dots, (i_k, j_k)) \ar[u]\ar[r] & F((i_1, j_1), \dots, (n,m), \dots, (i_k, j_k))\ar[u]
    }
    \]
    is cartesian.
  \end{enumerate}

\end{defn}

Using this we may define the symmetric spectrum $K(\mc{C})$

\begin{defn}\label{K_symmetric_spectrum}
  Let $\mc{C}$ be an SW-category and define
  \[
  K(\mc{C})(k) = |N_\bullet (w S^{(k)}_{\bullet, \dots, \bullet} \mc{C})|. 
  \]
  This space has a $\Sigma_k$-action given by permuting the simplicial directions. 
\end{defn}

\section{Additivity}

The slogan for algebraic $K$-theory is that it is the universal machine to split exact sequences. A more precise statement is that the ``Additivty Theorem'' holds and that $K$-theory is the universal functor for which this theorem holds.  This was recently proven in \cite{blumberg_gepner_tabuada,barwick} though it has been a guiding principle of the field since its inception. As one would expect, almost every other standard property of $K$-theory follows from additivity \cite{staffeldt}. In our situation, we cannot hope to prove the array of theorems that additivity usually provides; we settle for using it to prove that $K(\mbf{Var}_{/k})$ is in infinite loop space.

The additivity theorem for SW-categories is as follows. This section will be devoted to the proof of this theorem.

\begin{thm}[Additivity]\label{additivity}
Let $\mc{C}$ be an SW-category. Consider the map
\[
A = (s, q): F^+_1(\mc{C}) \to \mc{C} \times \mc{C}. 
\]
Upon applying $\widetilde{S}_\bullet$ we get a homotopy equivalence of simplicial sets
\[
\widetilde{S}_\bullet F^+_1(\mc{C}) \xrightarrow{\sim} \widetilde{S}_\bullet \mc{C} \times \widetilde{S}_\bullet \mc{C}. 
\]
\end{thm}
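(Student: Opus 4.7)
The plan is to model the proof on McCarthy's argument~\cite{mccarthy}: exhibit a candidate section $B$ of the map $A$ and then produce a simplicial homotopy linking the composite $B \circ A$ to the identity. First I would define
$$B: \mc{C} \times \mc{C} \longrightarrow F^+_1(\mc{C}), \qquad (X, Z) \longmapsto \bigl(X \hookrightarrow X \amalg Z \xleftarrow{\circ} Z\bigr).$$
By axiom 4(a) of Definition~\ref{cat_w_cofibs} the output is indeed a subtraction sequence, and one checks directly that $B$ preserves the initial object, subtraction sequences, and cocartesian squares using the subtraction-and-pushout compatibility built into Definition~\ref{subtractive_category}; so $B$ is an exact functor of SW-categories. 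Since $A \circ B = \mathrm{id}_{\mc{C} \times \mc{C}}$ on the nose, the theorem reduces to showing that $\widetilde{S}_\bullet(B \circ A) \simeq \mathrm{id}$ on $\widetilde{S}_\bullet F^+_1(\mc{C})$.

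Next I would produce the homotopy via a bisimplicial (or ``path-object'') construction. Introduce an auxiliary simplicial object $T_\bullet \mc{C}$ whose $n$-simplices consist of an $n$-simplex of $\widetilde{S}_\bullet F^+_1(\mc{C})$ together with a compatible ``coproduct decomposition'' of each constituent subtraction sequence; the face/degeneracy maps are those inherited from $\widetilde{S}_\bullet$. There are two natural projections $T_\bullet \mc{C} \to \widetilde{S}_\bullet F^+_1(\mc{C})$, one forgetting the decoration and the other replacing each subtraction sequence by its split model $B \circ A$. Using an extra-degeneracy (or cofinality) argument of the kind familiar from McCarthy's proof, one shows that both projections are simplicial homotopy equivalences, so their targets become identified in the realization and $\mathrm{id} \simeq B \circ A$. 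The gluing axiom for weak equivalences in an SW-category and the third clause of Definition~\ref{subtractive_category} are precisely what guarantee that the interpolating simplices of $T_\bullet \mc{C}$ are again simplices of $\widetilde{S}_\bullet F^+_1(\mc{C})$.

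The main obstacle is that the natural comparison $X \amalg (Y-X) \to Y$ is not an isomorphism in $\mc{C}$ in general — for $\mbf{Var}_{/k}$ it is a bijection on points but typically not an isomorphism of schemes — and in fact it is not even a morphism in $F^+_1(\mc{C})$, since the induced squares fail to be cartesian. Consequently there is no direct natural transformation $\mathrm{id} \Rightarrow B \circ A$ available at the level of $F^+_1(\mc{C})$, and the homotopy must be built entirely at the simplicial level of $\widetilde{S}_\bullet$, where the relation between a subtraction sequence and its split form is witnessed only by the combinatorics of face and degeneracy maps. The technical heart of the proof is therefore verifying that the requisite cartesian-square and subtraction-sequence conditions (items (3) and (4) of Definition~\ref{s_dot}) are preserved under the auxiliary construction $T_\bullet \mc{C}$, so that the extra-degeneracy/cofinality argument truly produces contracting homotopies.
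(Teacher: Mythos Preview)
Your reduction via the section $B$ is sound: $B$ is exact and $A\circ B=\mathrm{id}$, so everything comes down to producing a homotopy $B\circ A\simeq\mathrm{id}$ after applying $\widetilde{S}_\bullet$. You also correctly identify the obstruction --- there is no natural transformation between $\mathrm{id}$ and $B\circ A$ at the level of $F^+_1(\mc{C})$, so the homotopy must be manufactured simplicially.

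The gap is in how you propose to build that homotopy. Your auxiliary object $T_\bullet\mc{C}$ and the appeal to an ``extra-degeneracy (or cofinality) argument of the kind familiar from McCarthy's proof'' do not match what McCarthy actually does, and as stated they do not supply enough structure. McCarthy's mechanism is not an extra degeneracy on a decorated version of $\widetilde{S}_\bullet F^+_1(\mc{C})$; it is the \emph{mixing} bisimplicial set $F^+_1(\mc{C})\otimes_{\widetilde{S}_\bullet A}\mc{C}^2$, whose $([m],[n])$-simplices pair an $m$-simplex of $\widetilde{S}_\bullet F^+_1(\mc{C})$ with an $(m{+}n)$-simplex of $\widetilde{S}_\bullet(\mc{C}\times\mc{C})$ extending its image under $A$. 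The extra $n$ steps of filtration --- the objects $S_0,\dots,S_n$ and $T_0,\dots,T_n$ in the paper's notation --- are precisely the ``room'' needed to write down the interpolating homotopy. One reduces to showing certain self-maps $E_n$ of this mixing object are equivalences, and the explicit simplicial homotopy $h_i$ that does this replaces the middle row by pushouts $C_i\amalg_{A_i}S_0$, using an object $S_0$ that simply does not exist inside $\widetilde{S}_\bullet F^+_1(\mc{C})$ alone. This is exactly where the pushout and pushout-product axioms of a subtractive category are consumed.

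In your formulation there is no analogue of $S_0$: the simplices of $T_\bullet\mc{C}$ are just simplices of $\widetilde{S}_\bullet F^+_1(\mc{C})$ with a decoration, so any candidate homotopy $h_i$ must be built from the data $A_j,B_j,C_j$ already present, and you have already observed that no map $C_j\to A_j\amalg B_j$ of the required (cartesian) type exists. The ``coproduct decomposition'' decoration does not create new objects to push out along. So either you must enlarge $T_\bullet\mc{C}$ to something that genuinely carries extra filtration data in the target --- at which point you have essentially rediscovered McCarthy's mixing construction --- or you need a different idea altogether. As written, the proposal stops short of the crucial ingredient.
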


For Waldhausen categories, the cleanest proof of additivity is due to McCarthy \cite{mccarthy}. We will mimic his proof to show that additivity holds for SW-categories; the key point is that while pushouts are used extensively in the proof, \textit{only} pushouts where \textit{both} legs are cofibrations are needed. As pointed out in Section 2, these are exactly the types of pushouts that we do have.

We pause here to recall the definition of a simplicial homotopy, since it will be used frequently below.

\begin{defn}\label{simplicial_homotopy}
  Let $X, Y$ be simplicial sets and $f, g: X \to Y$ simplicial maps. A \textbf{simplicial homotopy} is a simplicial map $X \times \Delta^1 \to Y$ such that restricting to the first vertex of $\Delta^1$ gives $f$ and restricting to the second vertex gives $g$.

  These requirements can be packaged combinatorially as follows. A simplicial homotopy is a family of maps $h_i: X_n \to Y_{n+1}$ with $0 \leq i \leq n$ for each $n$. The following identities are required to hold:
  \[
\begin{cases}
  d_0 h_0 = f \\
  d_{n+1} h_n = g
\end{cases}
\ \
\begin{cases}
  d_i h_j = h_{j-1} d_i & i < j\\
  d_{j+1} h_{j+1} = d_{j+1} h_j &  \\
  d_i h_j = h_j d_{i-1} & i > j+1
\end{cases}
\ \
\begin{cases}
  s_i h_j = h_{j+1} s_i & i \leq j \\
  s_i h_j = h_j s_{i-1} & i > j 
\end{cases}
\]

\end{defn}

We begin with a useful construction.

\begin{defn}\label{mixing_category}
  Let $\mc{C}, \mc{D}$ be categories with subtraction and let $f: \mc{C} \to \mc{D}$ be an exact functor. Define a bisimplicial set $\mc{C} \otimes_{S_\bullet f} \mc{D}$ by setting \[(\mc{C} \otimes_{S_\bullet f} \mc{D})([m],[n])\] to be pairs of diagrams in $S_m \mc{C}$ and $S_{m+n} \mc{D}$ (we are omitting the rows below the first):
  \begin{equation}\label{mixing_diagram}
  \xymatrix{
    X_0 \ar@{^{(}->}[r] & X_1\ar@{^{(}->}[r] & \cdots\ar@{^{(}->}[r] & X_m & &  \\
    Y_0 \ar@{^{(}->}[r] & Y_1\ar@{^{(}->}[r] & \cdots\ar@{^{(}->}[r] & Y_m \ar@{^{(}->}[r]& \cdots\ar@{^{(}->}[r] & Y_{m+n} 
  }
  \end{equation}
  such that $f(X_i) = Y_i$ and $f(X_i \to X_{i+i}) = Y_i \to Y_{i+1}$. The face and degeneracy maps are given by composition and repetition, respectively. 
\end{defn}

\begin{defn}
  Let $X_\bullet$ be a simplicial set. Then $X^R$ will denote a bisimplicial set $X^R ([m],[n]) = X([n])$. Similarly, $X^L$ will denote the bisimplicial set $X^L([m],[n]) = X([m])$. 
\end{defn}

\begin{defn}
  We define a bisimplicial map $\rho: \mc{C} \otimes_{\widetilde{S}_\bullet f} \mc{D} \to \widetilde{S}_\bullet \mc{D}^R$ by taking (\ref{mixing_diagram}) to
  \[
  Y_{m+1} - Y_{m+1} \hookrightarrow Y_{m+2}-Y_{m+1} \hookrightarrow \cdots \hookrightarrow Y_{m+n}-Y_{m+1}
  \]
\end{defn}

\begin{prop}\cite[p.326]{mccarthy}
The following are equivalent
\begin{enumerate}
\item $\widetilde{S}_\bullet f: \widetilde{S}_\bullet \mc{C} \to \widetilde{S}_\bullet \mc{D}$ is a homotopy equivalence
\item The map $\rho : \mc{C} \otimes_{\widetilde{S}_\bullet f} \mc{D} \to \widetilde{S}_\bullet \mc{D}^R$ is a homotopy equivalence. 
\end{enumerate}
\end{prop}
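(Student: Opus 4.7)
The plan, following the template of McCarthy \cite{mccarthy}, is to introduce a second bisimplicial projection out of $\mc{C} \otimes_{\widetilde{S}_\bullet f} \mc{D}$ and show that it is always a weak equivalence. Define $p_1 : \mc{C} \otimes_{\widetilde{S}_\bullet f} \mc{D} \to \widetilde{S}_\bullet \mc{C}^L$ by sending the diagram (\ref{mixing_diagram}) to its top row $X_0 \hookrightarrow \cdots \hookrightarrow X_m$. I will prove two things: (i) $p_1$ is a weak equivalence of bisimplicial sets for \emph{every} exact $f$, and (ii) under the resulting identification $|\mc{C} \otimes_{\widetilde{S}_\bullet f} \mc{D}| \simeq |\widetilde{S}_\bullet \mc{C}^L| \simeq |\widetilde{S}_\bullet \mc{C}|$, the bisimplicial map $\rho$ corresponds up to homotopy to $\widetilde{S}_\bullet f$. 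Since $|\widetilde{S}_\bullet \mc{D}^R| \simeq |\widetilde{S}_\bullet \mc{D}|$ by the same constant-direction argument, the equivalence (1) $\Leftrightarrow$ (2) follows from two-out-of-three.

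For (i), I use the realization lemma for bisimplicial sets: it is enough to exhibit, for each fixed $X_\bullet \in \widetilde{S}_m \mc{C}$, a contraction of the fiber $F^{X_\bullet}_\bullet$. The $n$-simplices of $F^{X_\bullet}_\bullet$ are flags $f(X_m) = Y_m \hookrightarrow Y_{m+1} \hookrightarrow \cdots \hookrightarrow Y_{m+n}$ in $\mc{D}$, with faces and degeneracies inherited from the mixing construction. There is a canonical $0$-simplex, namely the empty extension $f(X_m)$ itself. I build an explicit simplicial null-homotopy from the identity of $F^{X_\bullet}_\bullet$ to the constant map at this vertex, using operators $h_i : F^{X_\bullet}_n \to F^{X_\bullet}_{n+1}$ that insert the identity cofibration $Y_{m+i} \xrightarrow{=} Y_{m+i}$ at position $i$. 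The simplicial identities demanded by Defn. \ref{simplicial_homotopy} then reduce to mechanical checks on inserting and deleting identity arrows in a flag, entirely analogous to the Waldhausen case.

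For (ii), the strategy is to pick a section $s : \widetilde{S}_\bullet \mc{C}^L \to \mc{C} \otimes_{\widetilde{S}_\bullet f} \mc{D}$ of $p_1$ that makes $\rho \circ s$ visibly equal to $\widetilde{S}_\bullet f$. Concretely, for an $(m,n)$-simplex with top row $X_0 \hookrightarrow \cdots \hookrightarrow X_m$, let $s$ set the bottom row to be $f(X_0) \hookrightarrow \cdots \hookrightarrow f(X_m) \hookrightarrow f(X_m) \amalg f(X_0) \hookrightarrow \cdots \hookrightarrow f(X_m) \amalg f(X_{n})$ (or, more carefully, using the axiomatic coproduct guaranteed by the subtraction-sequence $A \to A \amalg B \leftarrow B$). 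The subtraction $(f(X_m) \amalg f(X_i)) - f(X_m)$ is $f(X_i)$ by the axioms, so $\rho \circ s$ reproduces the flag $\emptyset \hookrightarrow f(X_0) \hookrightarrow \cdots \hookrightarrow f(X_n)$, which is exactly $\widetilde{S}_\bullet f$ applied to $X_\bullet$ after the natural identification. Since $s$ is a section of $p_1$, and $p_1$ is a homotopy equivalence by (i), we get $\rho \simeq \widetilde{S}_\bullet f$ after realization.

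The main obstacle is verifying that the constructions involved (the null-homotopy $h_i$ in step (i), and the section $s$ in step (ii)) genuinely land in the mixing bisimplicial set, i.e. that the inserted objects satisfy the cartesianness and subtraction conditions built into Defn. \ref{s_dot}. This is where the specifically subtractive axioms enter: the interaction between disjoint unions and subtraction (axiom 4a of Defn. \ref{cat_w_cofibs}), the functoriality of subtraction under cartesian squares, and the pushout-product and subtraction-and-pushout axioms of Defn. \ref{subtractive_category} are all invoked. The payoff is that these constructions never require forming a quotient or a pushout along a non-cofibration, so the Waldhausen-category proof of McCarthy transports to the present subtractive context without essential change.
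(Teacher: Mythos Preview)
Your step (i) is correct and is exactly what the paper does for the projection labelled $\mbf{1}$: fix $m$, contract each fibre using the ``initial object'' homotopy, and invoke the realization lemma.

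The gap is in step (ii). Your proposed section $s$ is not well defined: at bidegree $(m,n)$ the input is a flag $X_0\hookrightarrow\cdots\hookrightarrow X_m$ in $\widetilde{S}_m\mc{C}$, yet your formula for the extended bottom row uses $f(X_0),\dots,f(X_n)$, and there is no $X_n$ once $n>m$. If you repair this by the only natural padding (repeat $f(X_m)$), then $\rho\circ s$ lands in the basepoint of $\widetilde{S}_\bullet\mc{D}^R$ and does not recover $\widetilde{S}_\bullet f$. More seriously, the targets do not match: $\rho$ lands in $\widetilde{S}_\bullet\mc{D}^R$ (constant in the $m$-direction), while $\widetilde{S}_\bullet f$ read through $p_1$ lands in $\widetilde{S}_\bullet\mc{D}^L$ (constant in the $n$-direction). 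Any bisimplicial map $\widetilde{S}_\bullet\mc{C}^L\to\widetilde{S}_\bullet\mc{D}^R$ must be compatible with identity face maps in the $m$-direction, which forces its image to be essentially degenerate; you cannot see $\widetilde{S}_\bullet f$ this way, so no choice of section will make ``$\rho\circ s=\widetilde{S}_\bullet f$'' literally true.

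The paper's fix, following McCarthy, is not to look for a section at all but to interpose the auxiliary object $\mc{D}\otimes_{\widetilde{S}_\bullet\id}\mc{D}$ and build the commuting square
\[
\xymatrix{
\widetilde{S}_\bullet\mc{D}^R \ar@{=}[d] & \mc{C}\otimes_{\widetilde{S}_\bullet f}\mc{D} \ar[l]_-{\rho}\ar[d]^{f}\ar[r]^-{p_1} & \widetilde{S}_\bullet\mc{C}^L \ar[d]^{\widetilde{S}_\bullet f}\\
\widetilde{S}_\bullet\mc{D}^R & \mc{D}\otimes_{\widetilde{S}_\bullet\id}\mc{D} \ar[l]\ar[r] & \widetilde{S}_\bullet\mc{D}^L .
}
\]
The two right-pointing projections are equivalences by your argument (i), and then two-out-of-three across the diagram gives $(1)\Leftrightarrow(2)$ with no section and no $L$/$R$ mismatch. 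The passage through the identity case $f=\id$ is precisely what lets one compare a map into an $R$-constant object with a map into an $L$-constant one; your argument is missing this bridge.
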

\begin{proof}

  Consider the commutative diagram of bisimplicial sets:
  \[
  \xymatrix{
    \widetilde{S}_\bullet \mc{D}^R \ar@{=}[d] & \mc{C} \otimes_{\widetilde{S}_\bullet f} \mc{D}\ar[l] \ar[d]^f \ar[r]^{\mbf{1}} & \widetilde{S} _\bullet \mc{C}^L\ar[d]^f \\
    \widetilde{S}_\bullet \mc{D}^R & \mc{D} \otimes_{\widetilde{S}_\bullet \id} \mc{D} \ar[r]^{\mbf{2}}\ar[l]_{\mbf{3}} & \widetilde{S}_\bullet \mc{D}^L
  }
  \]
  The map labelled $\mbf{1}$ is obtained by forgetting the ``$\mc{D}$''-portion of $\mc{C}\otimes_{S_\bullet f} \mc{D}$. We now fix $m$ to obtain maps between simplicial sets (indexed by $n$)
  \[
  (\mc{C} \otimes_{\widetilde{S}_\bullet f} \mc{D})([m],[n]) \to \widetilde{S}_\bullet \mc{C}^L ([m],[n]) = \widetilde{S}_\bullet \mc{C} ([m]). 
  \]
  The simplicial set on the right is constant. The simplicial set on the left is homotopy equivalent to $\widetilde{S}_m \mc{C}$. To see this, fix the $\widetilde{S}_m \mc{C}$ portion of the pair and consider the resulting simplicial set. It is contractible by the same argument that contracts the nerve of a category with an initial object. Thus, levelwise, $\mc{C}\otimes_{\widetilde{S}_\bullet f} \mc{D}$ and $S_\bullet \mc{C}^L$ are equivalent, and thus homotopy equivalent as bisimplicial sets by the realization lemma. The maps $\mbf{1}$ and $\mbf{2}$ are shown to be homotopy equivalences in exactly the same way.

  Thus, the vertical right arrow will be a homotopy equivalence if and only if the upper left horizontal arrow is a homotopy equivalence. 

\end{proof}

This reduces the study of homotopy equivalences $\widetilde{S}_\bullet \mc{C} \to \widetilde{S}_{\bullet} \mc{D}$ to the study of the maps $\rho$. 

Now define a self-map 
\[
E_n: (\mc{C} \otimes_{S_\bullet F} \mc{D})(-,[n]) \to (\mc{C} \otimes_{S_\bullet F} \mc{D})(-,[n])
\]
via a subtracting procedure. We take the standard diagrams (\ref{mixing_diagram}) to  (again, omitting the rows below the first)
\[
\xymatrix@R=.3cm{
\emptyset \ar@{=}[r] & \cdots\ar@{=}[r] & \emptyset &                          &                          &           \\
\emptyset \ar@{=}[r] & \cdots \ar@{=}[r] & \emptyset \ar@{=}[r]& Y_{m+1} - Y_{m+1} \ar@{^{(}->}[r] & Y_{m+2} - Y_{m+1} \ar@{^{(}->}[r] & \cdots\ar@{^{(}->}[r] & Y_{m+n} - Y_{m+1} }
\]

The above proposition implies

\begin{cor}\cite[p.326]{mccarthy}
If $E_n$ are homotopy equivalences for all $n$, then $\widetilde{S}_\bullet F: \widetilde{S}_\bullet \mc{C} \to \widetilde{S}_\bullet \mc{D}$ is a homotopy equivalence. 
\end{cor}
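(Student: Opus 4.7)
The plan is to deduce the corollary from the preceding proposition, which reduces showing that $\widetilde{S}_\bullet F$ is a homotopy equivalence to proving that the bisimplicial map $\rho : \mc{C} \otimes_{\widetilde{S}_\bullet F} \mc{D} \to \widetilde{S}_\bullet \mc{D}^R$ is a homotopy equivalence. By the realization lemma for bisimplicial sets, it therefore suffices to show that for each fixed $n$, the simplicial map $\rho_n := \rho(-,[n])$ in the $m$-direction is a homotopy equivalence between $(\mc{C} \otimes_{\widetilde{S}_\bullet F} \mc{D})(-,[n])$ and the constant simplicial set $\widetilde{S}_n \mc{D}$.

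To that end, I would construct a natural section $\iota_n : \widetilde{S}_n \mc{D} \to (\mc{C} \otimes_{\widetilde{S}_\bullet F} \mc{D})(-,[n])$, which at level $m$ sends a flag $\emptyset = Y_0 \hookrightarrow Y_1 \hookrightarrow \cdots \hookrightarrow Y_n$ to the pair consisting of the trivial $\emptyset$-flag in $\widetilde{S}_m \mc{C}$ together with the flag $\emptyset \hookrightarrow \cdots \hookrightarrow \emptyset \hookrightarrow Y_1 \hookrightarrow \cdots \hookrightarrow Y_n$ in $\widetilde{S}_{m+n} \mc{D}$, where the initial $m+1$ entries are $\emptyset$. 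Because the image of the $\emptyset$-flag under $F$ is again the $\emptyset$-flag (using that $F$ is exact and in particular preserves the initial object), this lands in the mixing bisimplicial set. One verifies that $\iota_n$ is simplicial in $m$: face and degeneracy operators in the $m$-direction delete or repeat one of the initial $\emptyset$ columns, which leaves the diagram unchanged up to reindexing.

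The key observation is the pair of identities
\[
\rho_n \circ \iota_n = \operatorname{id}_{\widetilde{S}_n \mc{D}}, \qquad \iota_n \circ \rho_n = E_n.
\]
The first holds because subtracting the empty variety $Y_{m+1} = \emptyset$ from each $Y_{m+1+j}$ recovers $Y_{m+1+j} = Y_j$. The second is immediate from the definitions: after applying $\rho_n$ and then $\iota_n$, the top row is the constant $\emptyset$-flag and the bottom row consists of $m+1$ copies of $\emptyset$ followed by $Y_{m+2}-Y_{m+1},\dots,Y_{m+n}-Y_{m+1}$, which is precisely the output of $E_n$. Given that $E_n$ is a simplicial homotopy equivalence by hypothesis, pick a homotopy inverse $G_n$. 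Then $(G_n \circ \iota_n) \circ \rho_n = G_n \circ E_n \simeq \operatorname{id}$ provides a left homotopy inverse for $\rho_n$, while $\rho_n \circ \iota_n = \operatorname{id}$ provides a strict right inverse; hence $\rho_n$ is a homotopy equivalence.

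The realization lemma then upgrades the levelwise equivalences $\rho_n$ to a bisimplicial homotopy equivalence $\rho$, and the preceding proposition yields that $\widetilde{S}_\bullet F$ is a homotopy equivalence. The only step requiring care is the construction of $\iota_n$ --- specifically, checking that it actually lands in the mixing construction (which uses exactness of $F$) and that it is simplicial in the $m$-direction; once the indexing conventions of Definition~\ref{mixing_category} are unwound, the rest is a formal argument in simplicial homotopy theory and does not use any further properties specific to SW-categories.
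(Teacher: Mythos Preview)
Your proposal is correct and follows essentially the same argument as the paper: the section you call $\iota_n$ is precisely the paper's $I_n$, and the two identities $\rho_n \circ \iota_n = \operatorname{id}$ and $\iota_n \circ \rho_n = E_n$ are exactly what the paper records before concluding. Your write-up is in fact slightly more explicit than the paper's, since you spell out the use of the realization lemma and the standard left-inverse/right-inverse deduction that $\rho_n$ is a homotopy equivalence, whereas the paper compresses this to a single sentence.
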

\begin{proof}

  For a fixed $n$ define a map $I_n: \widetilde{S}_\bullet \mc{D}^L ([m],[n]) \to \mc{C} \otimes_{\widetilde{S}_\bullet f} \mc{D} ([m],[n])$ by sending
  \[
  \emptyset = Y_0 \hookrightarrow Y_1 \hookrightarrow \cdots \hookrightarrow Y_n
  \]
  to
  \[
  \xymatrix@R=.3cm{
    Y_0 \ar@{=}[r] & \cdots & Y_0  \\
    Y_0 \ar@{=}[r] & \cdots & Y_0 \ar@{=}[r] & Y_0 \ar@{^{(}->}[r] & Y_2 \ar@{^{(}->}[r] & \cdots\ar@{^{(}->}[r] & Y_n 
  }
  \]
  Note that $\rho \circ I_n  = \id$ and $I_n \circ \rho = E_n$. If $E_n$ are homotopy equivalences, then so are $\rho$ and $I_n$. But if $\rho$ is a homotopy equivalence then $\widetilde{S}_\bullet f$ is as well. 
\end{proof}

Let $A: F^+_1(\mc{C}) \xrightarrow{(s,t)} \mc{C} \times \mc{C}$ be the functor defined by the additivity functors (Defn. \ref{stq}). In order to use the techniques above to work with this map, we need to consider the category
\[
F^+_1(\mc{C}) \otimes_{S_\bullet A} \mc{C}^2. 
\]
and diagrams in this category. Here is their typical form (as always omitting the rows after the first)
\begin{equation}\label{additivity_diagram}
\xymatrix@C=.5cm@R=.5cm{
  \emptyset\ar@{=}[r] & A_0 \ar@{^{(}->}[r]\ar@{^{(}->}[d] & A_1 \ar@{^{(}->}[r]\ar@{^{(}->}[d] & \cdots \ar@{^{(}->}[r] & A_m\ar@{^{(}->}[d]\\
  \emptyset \ar@{=}[r]& C_0 \ar@{^{(}->}[r] & C_1\ar@{^{(}->}[r] & \cdots\ar@{^{(}->}[r] & C_m\\
 \emptyset \ar@{=}[r]&B_0\ar@{^{(}->}[r]\ar[u]_{\circ} & B_1\ar@{^{(}->}[r] \ar[u]_{\circ} & \cdots\ar@{^{(}->}[r] & B_m \ar[u]_{\circ}\\
  \emptyset \ar@{=}[r]& A_0\ar@{^{(}->}[r] & A_1\ar@{^{(}->}[r] & \cdots\ar@{^{(}->}[r] & A_m\ar@{^{(}->}[r] & S_0\ar@{^{(}->}[r] & \cdots\ar@{^{(}->}[r] & S_n\\
  \emptyset\ar@{=}[r] & B_0\ar@{^{(}->}[r] & B_1\ar@{^{(}->}[r] & \cdots\ar@{^{(}->}[r] & B_m\ar@{^{(}->}[r] & T_0 \ar@{^{(}->}[r] & \cdots \ar@{^{(}->}[r] & T_n
}
\end{equation}

\begin{rmk}\label{diagram_labelling}
In what follows, we will need to refer to the rows below the pictured rows in (\ref{additivity_diagram}) --- the pictured rows are the zeroth rows of a flag. The elements in the $k$th row will be referred to by $A_{k,l}$, $B_{k,l}$ and $C_{k,l}$. 
\end{rmk}

\begin{rmk}
We note that the only difference between these diagrams and the diagrams that appear in \cite{mccarthy} is the fact that the arrows between the $B$s and $C$s go in opposite directions. 
\end{rmk}

We will now show that $E_n$ for the functor $(s,q): F^+_1(\mc{C}) \to \mc{C} \times \mc{C}$ is a homotopy equivalence. Recall that $E_n$ will be a map
\[
E_n : F^+_1 (\mc{C}) \otimes_{S_\bullet A}  \mc{C}^2 \to  F^+_1 (\mc{C}) \otimes_{S_\bullet A}  \mc{C}^2
\]

To show that this is a weak equivalence, McCarthy defines a map of simplicial sets
\[
\Gamma: F^+_1 (\mc{C}) \otimes_{S_\bullet A} \mc{C}^2(-,[n]) \to F^+_1 (\mc{C}) \otimes_{S_\bullet A} \mc{C}^2 (-,[n])
\]
and shows
\begin{enumerate}
\item $\Gamma$ is a retraction onto some subspace $\mc{X} \subset F^+_1 (\mc{C}) \otimes_{\widetilde{S}_\bullet A} \mc{C}^2 (-,[n])$
\item $\Gamma \simeq \id$
\item $E_n|_{\mc{X}} \simeq \id_{\mc{X}}$
\item $E_n \circ \Gamma = E_n$
\end{enumerate}

Taken together, these implies that $E_n$ is a homotopy equivalence. 

We will use exactly the same procedure here. 

\begin{defn}
  The map $\Gamma: F^+_1(\mc{C}) \times_{S_\bullet A} \mc{C}^2 (-,[n]) \to F^+_1(\mc{C}) \times_{S_\bullet A} \mc{C}^2 (-,[n])$ is defined by taking diagrams (\ref{additivity_diagram}) to diagrams (as always, omitting rows below the first) to
  \begin{equation}\label{En_diagram}
  \xymatrix@C=.5cm@R=.5cm{
    \emptyset \ar@{=}[r]& \emptyset \ar@{=}[r]\ar@{^{(}->}[d] & \emptyset \ar@{=}[r]\ar@{^{(}->}[d] & \cdots\ar@{=}[r] & \emptyset\ar@{^{(}->}[d]\\
    \emptyset \ar@{=}[r] & B_0 \ar@{=}[d] \ar@{^{(}->}[r]   & B_1\ar@{=}[d]\ar@{^{(}->}[r] & \cdots\ar@{^{(}->}[r] & B_m\ar@{=}[d]\\
    \emptyset \ar@{=}[r] & B_0 \ar@{^{(}->}[r] & B_1 \ar@{^{(}->}[r]& \cdots\ar@{^{(}->}[r] & B_m \\
    \emptyset\ar@{=}[r] & \emptyset \ar@{=}[r]& \emptyset\ar@{=}[r] & \cdots\ar@{=}[r] & \emptyset\ar@{=}[r] & S_0 - S_0\ar@{^{(}->}[r] & S_1 - S_0 \ar@{^{(}->}[r]& \cdots\ar@{^{(}->}[r] & S_m - S_0 \\
    \emptyset\ar@{=}[r] & B_0\ar@{^{(}->}[r] & B_1\ar@{^{(}->}[r] & \cdots\ar@{^{(}->}[r] & B_m \ar@{^{(}->}[r]& T_0 \ar@{^{(}->}[r]& T_1 \ar@{^{(}->}[r]& \cdots\ar@{^{(}->}[r] & T_n
  }
  \end{equation}
\end{defn}

We have already defined $E_n$ in general above, but it is useful to spell out what it is in this context.

\begin{defn}
  $E_n$ takes diagrams of the form (\ref{additivity_diagram}) to
  \[
  \xymatrix@C=.3cm@R=.3cm{
    \emptyset \ar@{=}[r]\ar@{^{(}->}[d] & \cdots \ar@{=}[r] & \emptyset\ar@{^{(}->}[d]\\
    \emptyset \ar@{=}[r] & \cdots \ar@{=}[r] & \emptyset\\
    \emptyset \ar@{=}[r]\ar[u] & \cdots \ar@{=}[r] & \emptyset \ar[u]\\
    \emptyset \ar@{=}[r] & \cdots\ar@{=}[r] & \emptyset \ar@{=}[r] & S_0 - S_0\ar@{^{(}->}[r] & S_1 - S_0\ar@{^{(}->}[r] & \cdots\ar@{^{(}->}[r] & S_n - S_0 \\
    \emptyset \ar@{=}[r] & \cdots\ar@{=}[r] &\emptyset \ar@{=}[r] & T_0 - T_0\ar@{^{(}->}[r] & T_1 - T_0 \ar@{^{(}->}[r]& \cdots\ar@{^{(}->}[r] & T_n - T_0
  }
  \]
\end{defn}

\begin{defn}
Let the subspace $\mc{X} \subset F^+_1(\mc{C}) \times_{S_\bullet A} \mc{C}^2 (-,[n])$ denote the subspace where all of the $A_i$ are $\emptyset$. 
\end{defn}

\begin{prop}
$E_n|_{\mc{X}} \simeq \id_{\mc{X}}$
\end{prop}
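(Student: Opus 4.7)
The plan is to construct an explicit simplicial homotopy (in the sense of Definition \ref{simplicial_homotopy}) from $\id_{\mc X}$ to $E_n|_{\mc X}$, modeled on McCarthy's argument in the Waldhausen-categorical setting \cite{mccarthy}. Concretely, for each $m \geq 0$ we want maps $h_i \colon \mc X([m],[n]) \to \mc X([m+1],[n])$ for $0 \leq i \leq m$ satisfying the combinatorial simplicial identities, with $d_0 h_0 = \id_{\mc X}$ and $d_{m+1} h_m = E_n|_{\mc X}$.

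First I would unpack the data of an element of $\mc X([m],[n])$. Since every $A_i$ is forced to be $\emptyset$ on $\mc X$, the subtraction sequence $A_i \hookrightarrow C_i \leftarrow B_i$ collapses to $C_i \cong B_i$, and an $m$-simplex in $\mc X$ becomes the data of (i) a $\widetilde S_m$-flag $B_0 \hookrightarrow \cdots \hookrightarrow B_m$; (ii) a free $\widetilde S_n$-flag $S_0 \hookrightarrow \cdots \hookrightarrow S_n$ sitting above $m+1$ copies of $\emptyset$ in the $s$-row; and (iii) a flag $B_m \hookrightarrow T_0 \hookrightarrow \cdots \hookrightarrow T_n$ in the $q$-row that extends the $B$-flag. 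On this data $\id$ is tautological, whereas $E_n$ kills all of the $B$'s and subtracts $S_0$, resp.\ $T_0$, from the $S$-, resp.\ $T$-flag.

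Next I would define the homotopy operators $h_i$ by inserting a new level at position $i$ that interpolates between the two maps. On the $B$-row the operator performs a degeneracy-like duplication of $B_i$, while on the $S$- and $T$-rows the new level is built from the subtraction sequences $S_0 \hookrightarrow S_j \xleftarrow{\circ} S_j - S_0$ and $T_0 \hookrightarrow T_j \xleftarrow{\circ} T_j - T_0$, which exist by the subtraction axiom of Definition \ref{cat_w_cofibs} and are preserved under cartesian squares by Proposition \ref{subtractions_pullback}. The full higher $\widetilde S_\bullet$-data needed to fit $h_i(\sigma)$ into Definition \ref{s_dot} is then supplied by the subtraction-and-pushout axiom of Definition \ref{subtractive_category}, and the constraint that $A_i = \emptyset$ at every level of the output keeps us in $\mc X$.

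The remaining task is the combinatorial verification that these $h_i$ satisfy the simplicial identities and the two boundary conditions. The hard part is not this bookkeeping, which is routine, but selecting the $h_i$ so that the intermediate data assembles into a genuine object of $\mc X([m+1],[n])$: every cartesian square, subtraction sequence and cofibration condition of Definition \ref{s_dot} must hold simultaneously at the inserted level. This is precisely where the Waldhausen proof uses the existence of quotients $X/Y$; here the role is played, and in fact somewhat simplified, by the direct availability of subtractions $X - Y$ together with the pushout-product and functoriality-of-subtraction axioms of an SW-category.
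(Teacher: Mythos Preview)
Your overall plan—produce explicit simplicial homotopy operators $h_i$ following McCarthy—is sound, but your description of what the $h_i$ do on the $B$-row is off, and this is the heart of the argument.  You write that on the $B$-row ``the operator performs a degeneracy-like duplication of $B_i$.''  Taken literally this means $h_i$ sends $(B_0,\dots,B_m)$ to $(B_0,\dots,B_i,B_i,\dots,B_m)$; but then $d_0h_0$ and $d_{m+1}h_m$ both return the original flag, and you have only exhibited the trivial homotopy $\id\simeq\id$, not $\id\simeq E_n$.  The paper's argument is different and simpler: since every $B_j$ admits the cofibration $B_j\hookrightarrow T_0$, one ``contracts the string of $B$'s to $T_0$''—that is, $h_i$ sends the $B$-row to $(B_0,\dots,B_i,T_0,\dots,T_0)$, using $T_0$ as a terminal object exactly as in the contraction of the nerve of a category with final object.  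With this choice $d_{m+1}h_m$ recovers the identity, while $d_0h_0$ subtracts $T_0$ (the new $X_{0,1}$) from the entire flag in row~5, which is precisely what $E_n$ does.

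The subtraction sequences $S_0\hookrightarrow S_j\leftarrow S_j-S_0$ and $T_0\hookrightarrow T_j\leftarrow T_j-T_0$ you invoke are not the mechanism here: those live in the $n$-direction, which is held fixed throughout, and the homotopy is entirely in the $m$-direction.  No pushouts, pushout-products, or subtraction-and-pushout axioms are needed for this step; you are anticipating the harder homotopy $\Gamma\simeq\id$, where those axioms genuinely enter via the pushouts $C_j\amalg_{A_j}S_0$ in diagram~(\ref{main_homotopy_diagram}).  For the present proposition, the one-line ``final object'' contraction suffices.
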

\begin{proof}
This is done by exactly the same argument that contracts a category with final object. One contracts the string of $B$s in (\ref{En_diagram}) to $T_0$. 
\end{proof}

 We note that $\Gamma$ is a retraction of $F^+_1(\mc{C}) \times_{S_\bullet A} \mc{C}^2 (-,[n])$ onto $\mc{X}$. To complete the proof that $E_n$ is a homotopy equivalence, and thus additivity, we  need to show that $\Gamma$ is homotopic to the identity.

This is done by producing an explicit simplicial homotopy
\[
h: (E(\mc{C}) \times_{S_\bullet A} \mc{C}^2(-,[n])) \times \Delta^1 \to (E(\mc{C}) \times_{S_\bullet A} \mc{C}^2 (-,[n]))
\]
Recall that a simplicial homotopy can be expressed in a combinatorial fashion (Defn. \ref{simplicial_homotopy}) via maps $h_i$. We fix $m$ and for  $e \in E(\mc{C}) \otimes_{S_\bullet A} \mc{C}^2 ([m],[n])$ (which recall is of the form (\ref{additivity_diagram})) and we define $h_i (e)$ with $0 \leq i \leq m$ to be
\begin{equation}\label{main_homotopy_diagram}
\xymatrix@C=.2cm @R=.5cm{
0=A_0 \ar@{^{(}->}[r] \ar@{^{(}->}[d] & A_1 \ar@{^{(}->}[r] \ar@{^{(}->}[d] & \cdots & A_i \ar@{^{(}->}[d] \ar@{^{(}->}[r] & S_0 \ar@{=}[r] \ar@{^{(}->}[d] & S_0 \ar@{=}[r] \ar@{^{(}->}[d] & \cdots \ar@{=}[r] & S_0 \ar@{^{(}->}[d]\\
C_0 \ar@{^{(}->}[r] & C_1\ar@{^{(}->}[r] & \cdots & C_i \ar@{^{(}->}[r] & C_i \amalg_{A_i} S_0 \ar@{^{(}->}[r] & C_{i+1} \amalg_{A_{i+1}} S_0  & \cdots \ar@{^{(}->}[r] & C_m \amalg_{A_m} S_0\\
B_0 \ar[u]^{\circ} \ar@{^{(}->}[r] & B_1 \ar[u]^{\circ} \ar@{^{(}->}[r] & \cdots & B_i \ar[u]^{\circ} \ar@{=}[r] & B_i \ar[u]^{\circ} \ar@{^{(}->}[r] & B_{i+1} \ar@{^{(}->}[r] \ar[u]^{\circ} & \cdots \ar@{^{(}->}[r] & B_m \ar[u]^{\circ}\\
 0=  A_0 \ar@{^{(}->}[r] & A_1\ar@{^{(}->}[r] & \cdots  \ar@{^{(}->}[r] &  A_i \ar@{^{(}->}[r] & S_0 \ar@{=}[r] & S_0 \ar@{=}[r] & \cdots \ar@{=}[r] & S_0\\
0 = B_0 \ar@{^{(}->}[r] & B_1 \ar@{^{(}->}[r] & \cdots \ar@{^{(}->}[r] & B_i \ar@{=}[r] & B_i \ar@{^{(}->}[r] & B_{i+1} \ar@{^{(}->}[r] & \cdots \ar@{^{(}->}[r] & B_m 
}
\end{equation}

Note that here we are using the existence of pushouts provided by Th. \ref{pushouts_exist}. This is one of the critical points where that fact is used.

Although we are not displaying the levels below the upper row the the diagrams above, we will need to reference the rows below. For the diagram $e$, we retain the conventions of Rmk. \ref{diagram_labelling}: the choices of subtraction in the diagram $e$ will be refered to by $A_{k,l}$, $B_{k,l}$ and $C_{k,l}$. For the diagram $h_i (e)$ we make the convention that the symbol $h_i (e)^A$ represents the flag corresponding to the first row in (\ref{main_homotopy_diagram}), and similarly for $h_i (e)^B$ and $h_i (e)^C$. Thus, the hidden parts of the flags are indexed by $h_i(e)^A_{k,l}, h_i(e)^B_{k,l}, h_i(e)^C_{k,l}$.  We now explicitly identify these flags. For $i \geq 0$ define 

\begin{align*}
  h_i (e)^A_{k,l} &=
  \begin{cases}
    A_{k,l} & k, l \leq i \\
    S_0 - A_{0,k} & k \leq i, l > i\\
    \emptyset & \text{otherwise}
    \end{cases} \\
  h_i (e)^C_{k,l} &=
  \begin{cases}
    C_{k,l} & k, l \leq i \\
    C_{k,l-1} \amalg_{A_{k,l-1}} h_i (e)^A_{k,l} & k < i, l > i\\
    h_i(e)_{k,l} & k= i, l = i+1\\
    C_{k,l-1} \amalg_{A_{k,l-1}} h_i(e)^A_{k,l} & k = i, l > i+1\\
    h_i(e)^B_{k,l} & l, k > i
    \end{cases} \\
  h_i (e)^B_{k,l} &=
  \begin{cases}
    B_{k,l} & k, l \leq i \\
    B_{k,i} & l = i+1, k = i+1, l \neq k \\
    \emptyset & l = k = i+1\\
    B_{k,l-1} & \text{otherwise}
  \end{cases}
\end{align*}

The appendix depicts a few of these diagrams. 

For the most part, the maps in (\ref{main_homotopy_diagram}) are clear. One that requires comment is the map in $h_i(e)$ from $B_{k,l}$ to $C_{k,l} \amalg_{A_{k,l}} (S_0 - A_k)$. This will be the composition
\[
B_{k,l} \xrightarrow{\cong} C_{k,l} - A_{k,l} \xrightarrow{\cong} C_{k,l} \amalg_{A_{k,l}} (S_0 - A_k) - (S_0 - A_k) \hookrightarrow C_{k,l} \amalg_{A_{k,l}} (S_0 - A_k)
\]
Each of these isomorphisms and inclusions is uniquely determined by data in $e$. The other maps that require comment are those from $h^B_{k,l}$ to $h^C_{k,l}$ --- whenever both of them are $B_{k,l}$ the map between them will be the identity.

We now have to verify two assertions. The first is that the flags in (\ref{main_homotopy_diagram}) satisfy the requirements of Defn. \ref{s_dot} and the second is that $h_i$ satisfies the relations of simplicial homotopy in Defn. \ref{simplicial_homotopy}.

For the first assertion, it is clear that the flags below the rows $h_i(e)^A$ and $h_i (e)^B$ remain of the form required by Defn. \ref{s_dot}. The following proposition verifies the statement for the $h_i(e)^C$ row. 

\begin{prop}
  For any $k, l, s$ with $k < l < s$ 
  \[
  h^C_i(e)_{k,l} \to h^C_i(e)_{k,s} \leftarrow h^C_i(e)_{l,s}
  \]
  is a subtraction sequence. 
\end{prop}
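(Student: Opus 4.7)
The plan is to proceed by case analysis on the positions of $k$, $l$, $s$ relative to the index $i$. The formula for $h^C_i(e)_{k,l}$ has five branches depending on where $(k,l)$ sits relative to $i$, so triples with $k<l<s$ split into a small number of regions. The two extreme regions are immediate: when $s \leq i$ the three entries are literally $C_{k,l}$, $C_{k,s}$, $C_{l,s}$, and the claim is the $C$-subtraction sequence already packaged inside $e$; when $i < k$ all three entries come from the default branch of $h^B_i(e)$, a reindexing of the $B$-flag of $e$, and the subtraction sequence property transports across the reindexing. Boundary subcases in which an index hits $i+1$ force $h^B_i(e)$ to degenerate to $B_{k,i}$ or $\emptyset$; these are checked by hand since $\emptyset \hookrightarrow X \xleftarrow{\circ} X$ is always a subtraction sequence.

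The essential work lies in the mixed region $k \leq i < l < s$. Here $h^C_i(e)_{k,l}$ and $h^C_i(e)_{k,s}$ are pushouts of the form $C_{k,t-1}\amalg_{A_{k,t-1}}(S_0-A_{0,k})$ for $t\in\{l,s\}$, while $h^C_i(e)_{l,s}$ equals the reindexed entry $h^B_i(e)_{l,s}$ of the $B$-flag. The strategy is to realize the claimed triple as the pushout-along-rows of a diagram of three subtraction sequences: the top row the $A$-subtraction sequence $A_{k,l-1} \hookrightarrow A_{k,s-1} \xleftarrow{\circ} A_{l-1,s-1}$ from $e$, the bottom row the constant sequence on $S_0 - A_{0,k}$, and the middle row the corresponding $C$-subtraction sequence from $e$; the $B$-side reads off as a subtraction sequence from $e$ as well. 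The columns are subtraction sequences by the levelwise $A \hookrightarrow C \xleftarrow{\circ} B$ structure of $e$, and the top two squares are cartesian because the $A$- and $C$-rows of $e$ satisfy the cartesian condition built into the $\widetilde{S}_\bullet$-construction (fourth bullet of Definition \ref{s_dot}). Axiom 3 of Definition \ref{subtractive_category}, equivalently Proposition \ref{var_sub_pushout}, then supplies the desired subtraction sequence on row-pushouts. The remaining mixed subcase $k \leq i, l \leq i < s$ is handled by the same template, now with only the rightmost entry of the triple being a nontrivial pushout.

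The hard part is a bookkeeping check: the $B$-side of the row-pushout produces, a priori, something of the form $B_{l-1,s-1}$, while the definition of $h^B_i(e)_{l,s}$ in the mixed region returns $B_{l,s-1}$ (or its analogue). I expect this identification to reduce to direct inspection of the ``otherwise'' branch in the definition of $h^B_i(e)$, using the cartesian-square hypotheses extracted from the original flag $e$ together with the functoriality of subtraction in cartesian squares (the third sub-axiom of Definition \ref{cat_w_cofibs}) to canonically identify the relevant subtractions. Once this identification is in place, reading the subtraction sequence off from the bottom row of the cube produced by the pushout-and-subtraction axiom completes the verification in the remaining cases.
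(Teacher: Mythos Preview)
Your case analysis and the treatment of the extreme cases are correct, and for the mixed case $k\le l\le i<s$ your use of Axiom~3 of Definition~\ref{subtractive_category} matches the paper's argument exactly. The gap is in the other mixed case $k\le i<l<s$.

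In that regime you propose to feed Axiom~3 the columns $C_{k,l-1}\hookrightarrow C_{k,s-1}\xleftarrow{\circ} C_{l-1,s-1}$ and $A_{k,l-1}\hookrightarrow A_{k,s-1}\xleftarrow{\circ} A_{l-1,s-1}$, together with a ``constant'' column on $S_0-A_{0,k}$. But for that last column to be a subtraction sequence its bottom entry must be $\emptyset$, and then the span in the bottom row would require a cofibration $A_{l-1,s-1}\hookrightarrow\emptyset$. Since $l-1,s-1\le m$ and $l-1<s-1$, the object $A_{l-1,s-1}$ is a genuine entry of the $A$-flag of $e$ and is not $\emptyset$ in general, so no such map exists. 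Consequently the diagram cannot be assembled and Axiom~3 does not apply directly; this is not merely a bookkeeping mismatch on the $B$-side but a failure of the setup itself.

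The paper handles this case by a different route. It first observes (via an auxiliary application of Axiom~3 with the degenerate span $X\hookleftarrow X\hookrightarrow Y$) that for each $t$ the sequence
\[
S_0-A_{0,k}\;\hookrightarrow\; C_{k,t-1}\amalg_{A_{k,t-1}}(S_0-A_{0,k})\;\xleftarrow{\circ}\;B_{k,t-1}
\]
is a subtraction sequence. These supply the left and middle \emph{columns} of a $3\times 3$ grid whose top row is $S_0-A_{0,k}\xrightarrow{=}S_0-A_{0,k}\xleftarrow{\circ}\emptyset$ and whose bottom row is the $B$-subtraction sequence $B_{k,l-1}\hookrightarrow B_{k,s-1}\xleftarrow{\circ}B_{l-1,s-1}$ from $e$. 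The middle row is then forced to be a subtraction sequence by the functoriality and uniqueness axioms for subtraction (Definition~\ref{cat_w_cofibs}, axiom~4(b)--(c)), not by Axiom~3. To repair your argument in this case you should replace the direct pushout-of-spans picture with this two-step argument.
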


\begin{proof}
  We show this in the case $k = 0$. The other cases are dealt with similarly. We proceed by dividing this into the sub-cases $l, s \leq i$, $l \leq i, s > i$ and $l, s > i$. 
  
  For $k < l < s \leq i$, the statement follows since $C_{0,l} \to C_{0,s} \leftarrow C_{l,s}$ is a subtraction sequence. 

  For $l \leq  i$, $i<s$ this is the statement that
  \[
  C_{0,l} \to C_{0,s-1} \amalg_{A_{0,s-1}} (S_0 - A_{0,0}) \leftarrow C_{0,s-1} \amalg_{A_{l,s-1}} (S_0 - A_{0,l})
  \]
  is a subtraction sequence. To see this, consider the diagram
  \[
  \xymatrix{
    C_{k,l}\ar@{^{(}->}[d] & A_{k,l} \ar@{^{(}->}[d]\ar@{^{(}->}[r]\ar@{_{(}->}[l] & A_{k,l}\ar@{^{(}->}[d]\\
    C_{k,s-1} & A_{k,s-1}\ar@{^{(}->}[r]\ar@{_{(}->}[l] & S_0 - A_{0,k}\\
    C_{l,s-1} \ar[u]^\circ & A_{l,s-1} \ar@{^{(}->}[r]\ar@{_{(}->}[l]\ar[u]^\circ & S_0 - A_{0,l} \ar[u]^\circ
  }
  \]
  The top squares are cartesian (by definition of the $A$ and $C$ flags). Thus, this satisfies Axiom 3 of Defn. \ref{subtractive_category}, and the statement follows.

  For $i < l, s$ the statement is that
  \[
  C_{0,l-1} \amalg_{A_{0,l-1}} S_0 \hookrightarrow C_{0,s-1} \amalg_{A_{0,s-1}} S_0 \leftarrow B_{l-1,s-1}
  \]
  is a subtraction sequence. To see this, we consider the diagram induced by functoriality
  \[
  \xymatrix{
    S_0 \ar@{=}[r] \ar@{^{(}->}[d] & S_0 \ar@{^{(}->}[d] & \emptyset \ar[l]\ar@{^{(}->}[d]\\
    C_{0,l-1} \amalg_{A_{0,l-1}} S_0 \ar@{^{(}->}[r] & C_{0,s-1} \amalg_{A_{0,s-1}} S_0 & B_{l-1,s-1} \ar[l]^{\circ}\\
    B_{0,l-1} \ar[u]^\circ \ar@{^{(}->}[r] & B_{0,s-1} \ar[u]^\circ & B_{l-1,s-1} \ar@{=}[u]\ar[l]_{\circ}
  }
  \]
The first and second columns are easily seen to be subtraction sequences and the top and bottom rows as well. This forces the middle row to be a subtraction sequence.

\end{proof}

We now verify that $h_i$ is a simplicial homotopy. Recall that this means that the following identities hold:

\[
\begin{cases}
  d_0 h_0 = \Gamma \\
  d_{n+1} h_n = \id 
\end{cases}
\ \
\begin{cases}
  d_i h_j = h_{j-1} d_i & i < j\\
  d_{j+1} h_{j+1} = d_{j+1} h_j &  \\
  d_i h_j = h_j d_{i-1} & i > j+1
\end{cases}
\ \
\begin{cases}
  s_i h_j = h_{j+1} s_i & i \leq j \\
  s_i h_j = h_j s_{i-1} & i > j 
\end{cases}
\]

First, it is clear that $d_{q+1} h_q = \id$. It is also clear that $d_0 h_0 = \Gamma$. 

The identities involving degeneracy hold trivially.

The middle group of identities is not hard: 

$\boxed{d_i h_j = h_{j-1} d_i}$ when $i < j$. This part only involves the $C_{k,l}$ and thus holds by the simplicial identities in the $C_{k,l}$ part of $e$. 

$\boxed{d_{j+1} h_{j+1} = d_{j+1} h_j}$. This identity is clear from the definitions.

$\boxed{d_i h_j = h_j d_{i-1}}$ when $i > j$. Again, this is not difficult. The identity comes from the simplicial identities on the $B_{k,l}$ part of $e$ and the fact that pushouts are chosen functorially and based on maps in $e$. (See the appendix for a picture).

With these verifications we know that $h_i$ is a simplicial homotopy and this ends the proof of the additivity theorem.

\subsection{Delooping}

Of course, additivity is a stepping stone to delooping for us. From the $\widetilde{S}_\bullet$ construction, we can produce a map $K(\mc{C})(k) \to \Omega K(\mc{C})(k+1)$ (the construction is reviewed below). A consequence of additivity will allow us to show that this map is a weak equivalence, which exhibits $K(\mc{C})(1)$ as an infinite loop space, and $K(\mc{C})$ as a quasi-fibrant symmetric spectrum. 

We will approach delooping as Waldhausen does. However, we need a definition first.

\begin{defn}\cite[Defn. 1.5.4]{waldhausen}
  Let $P X_\bullet$ denote the simplicial path space of the simplicial set $X_\bullet$. Then for SW-categories $\mc{A}$ and $\mc{B}$ with an exact functor $f: \mc{A} \to \mc{B}$ we define $\widetilde{S}_n (f: \mc{A} \to \mc{B})$ via pullback: 
  \[
  \xymatrix{
    \widetilde{S}_n (f: \mc{A} \to \mc{B}) \ar[r]\ar[d] & (P\widetilde{S}_\bullet \mc{B})_{n+1} \ar[d]\\
    \widetilde{S}_n \mc{A} \ar[r] & \widetilde{S}_n \mc{B}
  }
  \]
  
\end{defn}

For a simplicial path space $PX_\bullet$ there is a sequence of maps $X_1 \to PX_\bullet \to X_\bullet$ and $PX_\bullet$ is contractible, so on realization, gives a map $|X_1| \to \Omega |X|$. For $\mc{C}$ a subtractive category, we may consider
\[
w \mc{C} = (\widetilde{S}_\bullet \mc{C})_1 \to P(\widetilde{S}_\bullet \mc{C}) \to \widetilde{S}_\bullet \mc{C}
\]
and obtain a map $|w \mc{C}| \to \Omega|\widetilde{S}_\bullet \mc{C}|$. This map will in general not be an equivalence, but upon applying $\widetilde{S}_\bullet$ on more time, it will be. To make this precise, we need the following proposition.

\begin{prop}\cite[Prop. 1.5.5]{waldhausen}
Let $\mc{A}, \mc{B}$ be SW-categories. Suppose $f: \mc{A} \to \mc{B}$ is exact. Then 
\[
w \widetilde{S}_\bullet \mc{B} \to w \widetilde{S}_\bullet \widetilde{S}_\bullet (f: \mc{A} \to \mc{B}) \to w \widetilde{S}_\bullet \widetilde{S}_\bullet \mc{A}
\]
is a fibration up to homotopy. 
\end{prop}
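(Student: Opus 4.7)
The plan is to follow Waldhausen's proof of his Prop.~1.5.5, adapted to the SW-setting using the additivity theorem (Thm.~\ref{additivity}) just proved. The strategy is to show that, after applying $\widetilde{S}_\bullet$ one more time, the middle term splits up to homotopy as a product of the two outer terms, so that the sequence becomes equivalent to a canonical product fibration.

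First I would unpack the pullback defining $\widetilde{S}_n(f:\mc{A}\to\mc{B})$. An $n$-simplex consists of a pair $(X,Y)$ with $X\in\widetilde{S}_n\mc{A}$ and $Y\in\widetilde{S}_{n+1}\mc{B}$ satisfying $d_0 Y = f(X)$ after reindexing. Unpacking the subtraction-sequence axioms on $Y$, the data beyond $f(X)$ amounts to an object $B:=Y_{0,0}\in\mc{B}$ together with a compatible system of subtraction sequences
\[
B\hookrightarrow Y_{0,j+1}\xleftarrow{\circ} f(X_{0,j}) \qquad (0\le j\le n),
\]
coherent with the flag maps of $X$. This identifies $\widetilde{S}_n(f)$ with an SW-category of triples $(B,X,Y_{0,\bullet})$, which inherits an SW-structure levelwise from $\mc{A}$, $\mc{B}$, and the already-established SW-structure on $\widetilde{S}_n\mc{B}$.

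Next I would apply additivity. There are exact functors $s:\widetilde{S}_n(f)\to\mc{B}$, $(B,X,Y)\mapsto B$, and $q:\widetilde{S}_n(f)\to\widetilde{S}_n\mc{A}$, $(B,X,Y)\mapsto X$, which together with the middle functor $t:(B,X,Y)\mapsto Y$ exhibit $\widetilde{S}_n(f)$ as a two-sided variant of $F_1^+(\widetilde{S}_n\mc{B})$: the category of subtraction sequences in $\widetilde{S}_n\mc{B}$ whose left leg is a constant flag at an object of $\mc{B}$ and whose right leg lies in $f(\widetilde{S}_n\mc{A})$. The proof of Thm.~\ref{additivity} — in particular the contracting homotopy $h_i$ of display~(\ref{main_homotopy_diagram}) — goes through verbatim for this restricted setting, since all of the pushouts constructed along the homotopy stay within the two distinguished subcategories. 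This yields, for each $n$, an equivalence
\[
\widetilde{S}_\bullet\widetilde{S}_n(f)\xrightarrow{(s,q)}\widetilde{S}_\bullet\mc{B}\times\widetilde{S}_\bullet\widetilde{S}_n\mc{A},
\]
and these equivalences assemble, on applying $w$ and letting $n$ vary, into an equivalence of bisimplicial sets
\[
w\widetilde{S}_\bullet\widetilde{S}_\bullet(f)\xrightarrow{\sim}w\widetilde{S}_\bullet\mc{B}\times w\widetilde{S}_\bullet\widetilde{S}_\bullet\mc{A}.
\]

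Under this equivalence, the map to $w\widetilde{S}_\bullet\widetilde{S}_\bullet\mc{A}$ becomes projection onto the second factor, and the map from $w\widetilde{S}_\bullet\mc{B}$ becomes inclusion as the first factor (via $B\mapsto(B,\emptyset,\text{const})$). The sequence in the proposition is therefore identified, up to homotopy, with the canonical product fibration
\[
w\widetilde{S}_\bullet\mc{B}\hookrightarrow w\widetilde{S}_\bullet\mc{B}\times w\widetilde{S}_\bullet\widetilde{S}_\bullet\mc{A}\twoheadrightarrow w\widetilde{S}_\bullet\widetilde{S}_\bullet\mc{A},
\]
which is a fibration after realization. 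The main obstacle I anticipate is verifying the restricted additivity statement in the middle step: one must check that every object built along the explicit homotopy of Section~4 (notably the pushouts $C_{k,l}\amalg_{A_{k,l}}(S_0-A_{0,k})$) still has its left leg in $\mc{B}$ and right leg in $f(\widetilde{S}_n\mc{A})$. This is exactly where the pushout--subtraction interaction axiom (Defn.~\ref{subtractive_category}, axiom~3) is indispensable.
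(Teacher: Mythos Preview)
Your approach is essentially correct and is exactly what the paper does: its entire proof is the single sentence ``This is exactly as in Waldhausen [Prop.~1.5.5],'' and your outline is a faithful adaptation of Waldhausen's argument to the SW-setting (split the middle term via additivity, identify the sequence with a product fibration). Two small points are worth flagging. First, there is an indexing slip: $Y_{0,0}=\emptyset$ by the axioms of $\widetilde{S}_\bullet$, so your distinguished object should be $B:=Y_{0,1}$, not $Y_{0,0}$; the subtraction sequences you write down are then correct once reindexed. Second, the additivity theorem as stated and proved in the paper (Thm.~\ref{additivity}) is only the unparametrized version for $F_1^+(\mc{C})$, whereas you need the ``two-sided'' variant with the left leg constrained to come from $\mc{B}$ and the right leg from $f(\widetilde{S}_n\mc{A})$. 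You correctly identify this as the main obstacle and correctly observe that McCarthy's explicit homotopy $h_i$ respects these constraints, so the same proof goes through---but this is genuinely a (mild) extension of what the paper actually proves, and in a fully written-out version you would want to state it as a separate lemma rather than invoke Thm.~\ref{additivity} directly.
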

\begin{proof}
  This is exactly as in Waldhausen \cite[Prop. 1.5.5]{waldhausen}. 
\end{proof}

When $\mc{A} = \mc{B} = \mc{C}$ where $\mc{C}$ is a subtractive category, $w \widetilde{S}_\bullet \widetilde{S}_\bullet (f: \mc{C} \to \mc{C}) = P(w \widetilde{S}_\bullet \widetilde{S}_\bullet \mc{C})$, and we immediately obtain the following corollary. 

\begin{cor}
The sequence
\[
i \widetilde{S}_\bullet \mc{C} \to P (i \widetilde{S}_\bullet \widetilde{S}_\bullet \mc{C}) \to i \widetilde{S}_\bullet \widetilde{S}_\bullet \mc{C}
\]
is a fibration sequence up to homotopy, i.e. 
\[
|i S_\bullet \mc{C}| \simeq \Omega |i S_\bullet S_\bullet \mc{C}|
\]
\end{cor}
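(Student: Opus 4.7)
The plan is to specialize the preceding proposition to the case $\mc{A} = \mc{B} = \mc{C}$ and $f = \id_{\mc{C}}$, and then reduce the statement to the standard fact that the realization of a simplicial path space is contractible. First I would unpack the defining pullback for $\widetilde{S}_\bullet(\id_{\mc{C}}: \mc{C} \to \mc{C})$: since the bottom map $\widetilde{S}_n \mc{C} \to \widetilde{S}_n \mc{C}$ is now the identity, the pullback collapses and the upper-left vertex is literally the upper-right vertex $(P\widetilde{S}_\bullet \mc{C})_{n+1}$. Unwinding simplicial structure maps (face and degeneracy) one checks that this identification is natural in $n$, giving a canonical isomorphism of simplicial SW-categories $\widetilde{S}_\bullet(\id_{\mc{C}}: \mc{C} \to \mc{C}) \cong P\widetilde{S}_\bullet \mc{C}$.

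Next I would argue that the outer $\widetilde{S}_\bullet$ commutes with the path space construction. The $\widetilde{S}_\bullet$-construction introduces a new simplicial direction built out of flags in $\widetilde{\operatorname{Ar}}[n]$, while $P$ only shifts the pre-existing simplicial direction by one and reindexes face maps; since the two simplicial directions are independent we obtain a canonical isomorphism $\widetilde{S}_\bullet \widetilde{S}_\bullet(\id: \mc{C}\to\mc{C}) \cong P(\widetilde{S}_\bullet \widetilde{S}_\bullet \mc{C})$ of bisimplicial objects. The nerve functor $i_\bullet$ is levelwise and therefore commutes with both the $\widetilde{S}_\bullet$-construction and the path functor, so the identification descends to $i\widetilde{S}_\bullet \widetilde{S}_\bullet(\id: \mc{C}\to\mc{C}) \cong P(i\widetilde{S}_\bullet \widetilde{S}_\bullet \mc{C})$, which is precisely the observation made just before the corollary.

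With this identification in hand, the sequence produced by the proposition becomes
\[
i\widetilde{S}_\bullet \mc{C} \to P(i\widetilde{S}_\bullet \widetilde{S}_\bullet \mc{C}) \to i\widetilde{S}_\bullet \widetilde{S}_\bullet \mc{C}
\]
and is a homotopy fibration. On geometric realization $|P(i\widetilde{S}_\bullet \widetilde{S}_\bullet \mc{C})|$ is contractible by the standard simplicial contraction of a path space (the extra degeneracy provided by $d_0$ gives an explicit null-homotopy), so the long exact sequence of a homotopy fibration identifies the fiber with the loop space of the base, giving $|i\widetilde{S}_\bullet \mc{C}| \simeq \Omega|i\widetilde{S}_\bullet \widetilde{S}_\bullet \mc{C}|$.

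The main obstacle is really just clerical: one must carefully verify that the two identifications (the collapse of the pullback and the commutation of $\widetilde{S}_\bullet$ with $P$) respect all of the simplicial structure maps, including the face maps that encode the shift in the path space. Once this bookkeeping is done, the corollary follows immediately from the proposition and the contractibility of path spaces, and no new input from the SW-category axioms is needed beyond what already went into the proposition.
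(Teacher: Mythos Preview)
Your proposal is correct and follows exactly the paper's approach: specialize the preceding proposition to $\mc{A}=\mc{B}=\mc{C}$ with $f=\id$, identify $\widetilde{S}_\bullet\widetilde{S}_\bullet(\id:\mc{C}\to\mc{C})$ with $P(\widetilde{S}_\bullet\widetilde{S}_\bullet\mc{C})$, and use contractibility of the path space. You have simply written out in detail the bookkeeping that the paper leaves implicit in its one-line remark preceding the corollary.
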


Finally, we have

\begin{thm}
Let $\mc{C}$ be a subtractive Waldhausen category. Then $\underline{K}(\mc{C})$ an infinite loop space. More precisely, $K(\mc{C})$ (see Defn. \ref{K_symmetric_spectrum}) is a quasi-fibrant symmetric spectrum. 
\end{thm}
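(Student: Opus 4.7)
The plan is to iterate the path-space fibration established in the preceding corollary. First, I would verify that the $\widetilde{S}_\bullet$-construction returns a simplicial SW-category, so its output can be fed back into the construction. The paper's lemma already shows each $\widetilde{S}_n \mc{C}$ is a subtractive category with levelwise cofibrations, fibrations, and subtraction sequences on $\widetilde{\operatorname{Ar}}[n]$; one equips it with the levelwise weak equivalences induced from $w\mc{C}$ to obtain an SW-category, and one checks that the simplicial face and degeneracy maps (deletion of rows/columns and insertion of identities) are exact functors. By induction, each $\widetilde{S}^{(k)}_{\bullet,\ldots,\bullet}\mc{C}$ is itself (in any one distinguished simplicial direction) a simplicial SW-category built from an SW-category, so the corollary applies to it.

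Second, I would construct the structure maps $\sigma_k \colon K(\mc{C})(k) \to \Omega K(\mc{C})(k+1)$ as follows. Applying the path-space construction along the new simplicial direction produced by an additional $\widetilde{S}_\bullet$ yields, by the previous corollary, a homotopy fibration sequence
\[
w\widetilde{S}^{(k)}_{\bullet,\ldots,\bullet}\mc{C} \longrightarrow P\bigl(w\widetilde{S}^{(k+1)}_{\bullet,\ldots,\bullet}\mc{C}\bigr) \longrightarrow w\widetilde{S}^{(k+1)}_{\bullet,\ldots,\bullet}\mc{C}.
\]
Since the path space is contractible, realization produces a map $|w\widetilde{S}^{(k)}_{\bullet,\ldots,\bullet}\mc{C}| \to \Omega|w\widetilde{S}^{(k+1)}_{\bullet,\ldots,\bullet}\mc{C}|$, which is the structure map $\sigma_k$. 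The corollary, whose proof chain runs through additivity, then says $\sigma_k$ is a weak equivalence for $k \geq 1$. That is precisely the quasi-fibrancy condition defining a quasi-fibrant symmetric spectrum.

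Third, I would record that the $\Sigma_k$-action on $K(\mc{C})(k)$ built into Definition \ref{K_symmetric_spectrum} by permutation of the $k$ simplicial directions is compatible with the structure maps: the map $\sigma_k$ is induced along a single distinguished direction, and the wreath-product compatibility needed for a symmetric spectrum follows formally from the fact that the iterated $\widetilde{S}_\bullet$-construction is symmetric in its simplicial indices. Together with the equivalence $\underline{K}(\mc{C}) = \Omega|w_\bullet \widetilde{S}_\bullet \mc{C}| = K(\mc{C})(1)$, the iterated equivalences exhibit $\underline{K}(\mc{C})$ as an infinite loop space.

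The main obstacle is really the bookkeeping of propagating the full SW-structure through the $\widetilde{S}_\bullet$-construction: one must check that pushouts along cofibrations, the pushout-product axiom, and the interaction of subtraction with pushouts (Definition \ref{subtractive_category}(3)) all hold levelwise in $\widetilde{S}_n \mc{C}$. This parallels Waldhausen's verification that $S_n \mc{C}$ is again Waldhausen, but is more intricate because subtraction sequences are governed by cartesian (not cocartesian) compatibility, so one must check compatibility with the cartesian-square condition built into the morphisms of $\widetilde{S}_n \mc{C}$. Once this and the appendix-style diagram chasing for exactness of face/degeneracy maps are established, the rest of the argument is the formal loop-space machinery.
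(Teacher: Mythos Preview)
Your proposal is correct and follows the same route as the paper: the theorem is stated as an immediate consequence of the preceding corollary, applied iteratively once one knows that $\widetilde{S}_\bullet$ outputs an SW-category (the lemma and the remark ``This will allow us to iterate the $S_\bullet$-construction''). You have simply spelled out the bookkeeping that the paper leaves implicit, including the symmetric-spectrum compatibility, and correctly identified the only nontrivial ingredient as the propagation of the SW-structure through $\widetilde{S}_n$.
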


\begin{rmk}
We will denote the associated delooped spectrum by $K(\mc{C})$. 
\end{rmk}

The main object of study is then obtained as a corollary:

\begin{cor}
Let $X$ be a scheme. There is a spectrum $K(\mbf{Var}_{/X})$ such that $\pi_0 K(\mbf{Var}_{/X})$ is the Grothendieck group of varieties over $X$.
\end{cor}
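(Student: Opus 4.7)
The proof is essentially a direct assembly of results already established in the paper, so the plan is mostly bookkeeping. First I would invoke Proposition \ref{var_SW_cat} to observe that $\mbf{Var}_{/X}$ is an SW-category, with closed immersions as cofibrations, the subtraction sequences of Definition \ref{subtraction_sequence_varieties}, and isomorphisms of $X$-schemes as weak equivalences. The subtractive-category axioms have already been verified in the corollary following Definition \ref{subtractive_category}, using the scheme-theoretic pushout results of Section 2 (Theorem \ref{pushouts_exist}, Proposition \ref{pushout_product}, and Proposition \ref{var_sub_pushout}), and the isomorphisms plainly satisfy the gluing and subtraction-compatibility axioms of the SW-structure.

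Next, I would apply the preceding theorem (that $\underline{K}(\mc{C})$ is an infinite loop space, or more precisely that the $K(\mc{C})$ of Definition \ref{K_symmetric_spectrum} is a quasi-fibrant symmetric spectrum) to $\mc{C} = \mbf{Var}_{/X}$. This immediately produces the desired spectrum $K(\mbf{Var}_{/X})$ whose zeroth space is weakly equivalent to $\underline{K}(\mbf{Var}_{/X}) = \Omega \lvert w_\bullet \widetilde{S}_\bullet \mbf{Var}_{/X} \rvert$.

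For the statement on $\pi_0$, I would use the proposition computing $\pi_0 \underline{K}(\mc{C}) = K_0(\mc{C})$, specialized to $\mc{C} = \mbf{Var}_{/X}$. By the generators-and-relations argument already given there, $\pi_0 \underline{K}(\mbf{Var}_{/X})$ is generated by isomorphism classes $[V]$ of varieties over $X$, subject to the relations $[Y] = [Z] + [Y - Z]$ coming from subtraction sequences $Z \hookrightarrow Y \xleftarrow{\circ} Y - Z$. These are exactly the defining relations of the Grothendieck group $K_0(\mbf{Var}_{/X})$. Since $K(\mbf{Var}_{/X})$ is a delooping of $\underline{K}(\mbf{Var}_{/X})$, we get $\pi_0 K(\mbf{Var}_{/X}) \cong \pi_0 \underline{K}(\mbf{Var}_{/X}) \cong K_0(\mbf{Var}_{/X})$, completing the proof.

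There is really no obstacle at this stage: every input has been verified earlier in the paper, so the corollary is a pure application of Proposition \ref{var_SW_cat}, the delooping theorem, and the $\pi_0$-computation. The only point requiring a sentence of comment is that the Grothendieck group defined at the start of the paper (generated by isomorphism classes modulo $[X] = [Y] + [X - Y]$ for closed inclusions $Y \hookrightarrow X$) matches the one coming from subtraction sequences, which is immediate because every closed inclusion $Y \hookrightarrow X$ fits into a canonical subtraction sequence $Y \hookrightarrow X \xleftarrow{\circ} X - Y$ and conversely.
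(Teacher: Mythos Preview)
Your proposal is correct and matches the paper's approach exactly: the paper states this corollary without proof, treating it as an immediate consequence of Proposition~\ref{var_SW_cat}, the delooping theorem just proved, and the earlier $\pi_0$ computation. Your write-up simply makes explicit the bookkeeping the paper leaves implicit.
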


The next section shows we can do even better.

\subsection{Multiplicative Structure}

There is more structure to the category $\mbf{Var}_{/k}$ than we have used thus far, in particular, there is a cartesian product: given $k$-varieties $X, Y$ we can consider $X \times_k Y$. This much is used to produce the ring structure on $K_0 (\mbf{Var}_{/k})$. It will also produce a homotopy-coherent product structure on $K(\mbf{Var}_{/k})$, an $E_\infty$-ring structure.

Before going on, we introduce a useful construction in order to define products. We will follow Geisser-Hesselholt \cite{geisser_hesselholt} in defining products, and so follow them in defining an $S_\bullet$-construction appropriate to the task. The only modification is to consider $S^Q \mc{C}$ where $Q$ is a finite set. That is, instead of indexing on numbers, we index on finite sets. This serves to make the action by the symmetric group more transparent.

\begin{defn}
  Let $Q$ be a finite set. Consider positive integers $n_i$ indexed on $Q$, i.e. where $i \in Q$. Then $\widetilde{S}^Q_{n_1, \dots, n_{|Q|}} \mc{C}$ is a functor from the arrow category
  \[
  F: \widetilde{\text{Ar}}[n_1]\times \cdots  \times \widetilde{\text{Ar}}[n_{|Q|}] \to \mc{C}
  \]
  satisfying the same requirements as Defn.\ref{iterated_S_dot_construction}.  
\end{defn}

We now define

\begin{defn}
  Let $\mc{C}$ be an SW-category. The $K$-theory spectrum is given by
  \[
  K(\mc{C})(k) = |w_\bullet S^Q_\bullet \mc{C}|
  \]
  with $Q = \{1, \dots, k\}$. 
\end{defn}

We want to introduce a product structure on $K(\mc{C})$ from the product structure on $\mc{C}$. This can be done by exact analogy to the case of Waldhausen categories explained carefully in \cite{blumberg_mandell_koszul,geisser_hesselholt}. The structure necessary on $\mc{C}$ is that it be a permutative category (see, e.g. \cite{elmendorf_mandell} for an introduction to permutative categories) and that the product behave well with respect to subtractive structure (see Defn. \ref{bi_exact} below). Typically, however, we are given a symmetric monoidal structure, not a permutative structure on $\mc{C}$. Luckily, this presents no difficulty as symmetric monoidal categories can always be rigidified to \textit{equivalent} permutative categories \cite{isbell}. Since this procedure produces an equivalence of categories, the SW-structure may be carried along the equivalence.

The requirement that the product structure interact nicely with the subtractive structure amounts of the following requirement.

\begin{defn}\label{bi_exact}
  Let $\mc{C}$ be a permutative SW-category. Then a symmetric monoidal structure $\otimes: \mc{C} \times \mc{C} \to \mc{C}$ is \textbf{biexact} if
  \begin{enumerate}
  \item $X \times \emptyset$ and $\emptyset \times X$ are both $\emptyset$
  \item $X \times (-)$ and $(-) \times X$ are exact functors
  \item For $X \to Y$ and $X' \to Y'$ cofibrations the pushout-product
    \[
    X' \times Y \amalg_{X \times Y} X \times Y' \to X' \times Y'
    \]
    is a cofibration. 
  \end{enumerate}
\end{defn}

We then have (see \cite[p.40]{geisser_hesselholt})

\begin{defn}
  Let $\mc{C}$ be a permutative SW-category with bi-exact product. There is an induced product
  \[
  \widetilde{S}^Q_\bullet \mc{C} \times \widetilde{S}^{Q'}_\bullet \mc{C} \to \widetilde{S}^{Q \amalg Q'}_{\bullet} \mc{C}
  \]
  given by amalgamating the morphisms in the arrow categories. This gives a $\Sigma_m \times \Sigma_n$-equivariant map
  \[
  K(\mc{C})_m \times K(\mc{C})_n \to K(\mc{C})_{m+n} 
  \]
  which descends to
  \[
  K(\mc{C})_m \sma K(\mc{C})_n \to K(\mc{C})_{m+n}. 
  \]
\end{defn}

\begin{thm}\cite[Th 2.8]{blumberg_mandell_koszul}\cite[Prop. 6.1.1]{geisser_hesselholt}
Let $\mc{C}$ be a symmetric monoidal SW-category with $\otimes : \mc{C} \times \mc{C} \to \mc{C}$ biexact. Let $\overline{\mc{C}}$ denote the rigidification of $\mc{C}$. Then $K(\overline{\mc{C}}) \simeq K(\mc{C})$ is an $E_\infty$-ring spectrum.  
\end{thm}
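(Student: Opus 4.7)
The plan is to carry over the Blumberg--Mandell / Geisser--Hesselholt strategy to the SW-setting, using the fact that all the diagrammatic manipulations in those references only rely on (i) the existence of pushouts along cofibrations, (ii) a pushout--product axiom, and (iii) the formal properties of an iterated $S_\bullet$-construction — all of which are available in Definition \ref{iterated_S_dot_construction} and Proposition \ref{var_SW_cat}.

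First, I would handle the passage from symmetric monoidal to permutative. Isbell's theorem gives an equivalence of categories $\overline{\mc{C}} \simeq \mc{C}$; since being an SW-category and being biexact are both preserved under equivalence (weak equivalences are then the images of isomorphisms, and cofibrations are the images of cofibrations), $\overline{\mc{C}}$ inherits the structure of a permutative SW-category with biexact product, and the induced exact functor $\overline{\mc{C}} \to \mc{C}$ is an equivalence. By the equivalence invariance of the $\widetilde{S}_\bullet$-construction on exact functors (which is formal from Definition \ref{s_dot}), this gives $K(\overline{\mc{C}}) \simeq K(\mc{C})$. So it suffices to build the $E_\infty$-structure on $K(\overline{\mc{C}})$ assuming $\overline{\mc{C}}$ is permutative.

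Second, I would produce the multiplication on the symmetric spectrum level. The biexact product $\otimes: \overline{\mc{C}} \times \overline{\mc{C}} \to \overline{\mc{C}}$ extends levelwise to a functor
\[
\widetilde{S}^Q_{\underline{n}} \overline{\mc{C}} \times \widetilde{S}^{Q'}_{\underline{n'}} \overline{\mc{C}} \longrightarrow \widetilde{S}^{Q \amalg Q'}_{\underline{n},\underline{n'}} \overline{\mc{C}},
\]
where one applies $\otimes$ entrywise on the indexing $\widetilde{\operatorname{Ar}}[n_1] \times \cdots \times \widetilde{\operatorname{Ar}}[n_{|Q|}] \times \widetilde{\operatorname{Ar}}[n'_1] \times \cdots$. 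The key check is that the entrywise $\otimes$ sends an object of the left-hand side to an object of the right-hand side: the condition $F(\ldots) = \emptyset$ when some $i_\ell = j_\ell$ follows from $X \otimes \emptyset = \emptyset$, the cartesian square condition and the subtraction-sequence condition follow from exactness of $X \otimes (-)$ and $(-) \otimes X$, and the pushout-product axiom in Definition \ref{bi_exact} is exactly what is needed to verify that each output cube has cartesian faces. Taking nerves of weak equivalences, this yields $\Sigma_m \times \Sigma_n$-equivariant maps $K(\overline{\mc{C}})_m \wedge K(\overline{\mc{C}})_n \to K(\overline{\mc{C}})_{m+n}$, and the unit is the $0$-simplex $\mbf{1} \in w_\bullet \widetilde{S}^{\emptyset}_\bullet \overline{\mc{C}}$.

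Third, I would verify the coherence: associativity is automatic from the associativity of amalgamating arrow categories together with the associator of the permutative structure; unitality follows from $X \otimes \mbf{1} = X = \mbf{1} \otimes X$ in a permutative category; and commutativity, which is the subtle point, follows from the symmetry isomorphism of the permutative structure combined with the block-permutation action of $\Sigma_{m+n}$ on the indexing sets $Q \amalg Q'$. This is exactly the argument of Elmendorf--Mandell for permutative categories, transcribed into the $\widetilde{S}_\bullet$ framework as in \cite[\S 6.1]{geisser_hesselholt} and \cite[Thm.~2.8]{blumberg_mandell_koszul}; none of those arguments use quotients, only pushouts along cofibrations and the pushout-product, so they go through verbatim. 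The result is a commutative symmetric ring spectrum structure on $K(\overline{\mc{C}})$, which by the standard comparison theorem is equivalent to an $E_\infty$-ring spectrum structure.

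The main obstacle is the second step: one must verify carefully that the entrywise tensor really does land in the iterated $\widetilde{S}_\bullet$-construction — specifically, that subtraction sequences are preserved by $X \otimes (-)$. Exactness of $X \otimes (-)$ as a functor of SW-categories is built into Definition \ref{bi_exact}(2), so this amounts to checking that the definition of exact functor of SW-categories indeed includes preservation of subtraction sequences (which it does, by our definition), and that this suffices to make all cartesian-square and subtraction-sequence axioms of Definition \ref{iterated_S_dot_construction} go through. Once these bookkeeping checks are in place, the rest is formal symmetric-monoidal coherence.
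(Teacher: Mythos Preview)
Your proposal is correct and matches the paper's approach exactly; indeed, the paper does not supply a proof at all, but simply cites \cite{blumberg_mandell_koszul,geisser_hesselholt} and asserts that the argument carries over ``by exact analogy,'' having already recorded the rigidification step, the biexactness definition, and the product map $\widetilde{S}^Q_\bullet \mc{C} \times \widetilde{S}^{Q'}_\bullet \mc{C} \to \widetilde{S}^{Q\amalg Q'}_\bullet \mc{C}$ in the preceding paragraphs. You have essentially written out the details that the paper leaves to the reader and to the references.
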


Of course, we would like this result for $\mc{C} = \mbf{Var}_{/k}$. That means that we have to show that the cartesian product is biexact. It is clear that properties 1 and 2 of biexactness hold. Property 3 is the content of the proposition below. 

\begin{prop}
Let $X\hookrightarrow X'$ and $Y \hookrightarrow Y'$ be cofibrations of varieties. Then the pushout-product
\[
X \times Y' \amalg_{X \times Y} X' \times Y  \to X' \times Y'
\]
is a cofibration. 
\end{prop}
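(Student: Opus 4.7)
The plan is to deduce this from the pushout-product axiom for varieties (Prop. \ref{pushout_product}) applied to the commuting square
\[
\xymatrix{
X \times Y \ar@{^{(}->}[r] \ar@{^{(}->}[d] & X \times Y' \ar@{^{(}->}[d] \\
X' \times Y \ar@{^{(}->}[r] & X' \times Y'
}
\]
The two hypotheses to check are: (i) all four arrows in the square are closed immersions, and (ii) the square is cartesian. Granted these, the pushout-product axiom delivers the desired closed immersion $X \times Y' \amalg_{X \times Y} X' \times Y \hookrightarrow X' \times Y'$.

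For (i), each of the four arrows is obtained by base change from either $X \hookrightarrow X'$ or $Y \hookrightarrow Y'$ along a projection (for instance, $X \times Y' \to X' \times Y'$ is the pullback of $X \hookrightarrow X'$ along $X' \times Y' \to X'$). Since closed immersions are stable under arbitrary base change, all four arrows are closed immersions.

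For (ii), I would verify the universal property directly. A $T$-point of $X \times Y'$ is a pair $(T \to X,\; T \to Y')$ and a $T$-point of $X' \times Y$ is a pair $(T \to X',\; T \to Y)$; these agree in $X' \times Y'$ iff the $X$-component equals the image of the $X'$-component (forcing the latter to factor through $X$) and the $Y'$-component equals the image of the $Y$-component (forcing the former to factor through $Y$). This is exactly the data of a $T$-point of $X \times Y$. Equivalently, affine-locally with $X' = \Spec B'$, $X = \Spec B'/I$, $Y' = \Spec C'$, $Y = \Spec C'/J$, one has the canonical isomorphism
\[
(B'/I \otimes_k C') \otimes_{B' \otimes_k C'} (B' \otimes_k C'/J) \;\cong\; B'/I \otimes_k C'/J,
\]
identifying the scheme-theoretic intersection of $X \times Y'$ and $X' \times Y$ inside $X' \times Y'$ with $X \times Y$.

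There is no real obstacle here: once one recognizes the pushout-product map of closed immersions of varieties as an instance of the pushout-product axiom already verified in Prop. \ref{pushout_product}, it remains only to confirm the cartesian condition, which is the standard fact that $A \times B$ is the fiber product of $A \times D$ and $C \times B$ over $C \times D$ for any maps $A \to C$ and $B \to D$. The conclusion then follows immediately.
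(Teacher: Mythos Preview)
Your proof is correct and follows exactly the same approach as the paper: form the square, observe it is cartesian with all arrows cofibrations, and invoke Prop.~\ref{pushout_product}. The paper's proof is terser (it simply asserts the square is cartesian and appeals to the already-verified axioms), while you supply the details on why the arrows are closed immersions and why the square is cartesian, but the argument is the same.
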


\begin{proof}
  The diagram
  \[
  \xymatrix{
    X\times Y \ar[d]\ar[r] & X' \times Y \ar[d]\\
    X \times Y' \ar[r] & X' \times Y'
  }
  \]
  is cartesian. Since we have verified the axioms for $\mbf{Var}_{/k}$, this means the pushout-product of this diagram is a cofibration. 
\end{proof}

\begin{cor}
The usual product induces a paing $\mbf{Var}_{/k} \times \mbf{Var}_{/k} \to \mbf{Var}_{/k}$ which descends to a product on $K(\mbf{Var}_{/k})$. Thus, $K(\mbf{Var}_{/k})$ is an $E_\infty$-ring spectrum. 
\end{cor}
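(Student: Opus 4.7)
The plan is to apply the preceding theorem of Blumberg--Mandell--Koszul and Geisser--Hesselholt to $\mc{C} = \mbf{Var}_{/k}$ equipped with the fiber product $\times_k$ as its symmetric monoidal structure. To do so, I need to verify that $(\mbf{Var}_{/k}, \times_k)$ is a symmetric monoidal SW-category and that the product is biexact in the sense of Defn.~\ref{bi_exact}. The symmetric monoidal structure is classical (associativity, unit $\Spec k$, and symmetry isomorphisms all exist canonically for fiber products over $k$), so all that truly requires work is biexactness.

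For biexactness, condition (1) is immediate: the empty variety $\emptyset$ has no $k$-points, so any fiber product $X \times_k \emptyset$ or $\emptyset \times_k X$ is again $\emptyset$. Condition (2), that $X \times_k (-)$ and $(-) \times_k X$ are exact functors of SW-categories, is a routine check: these functors preserve $\emptyset$ by condition (1); they preserve closed immersions (base change of a closed immersion is a closed immersion); they preserve subtraction sequences because open and closed immersions and their set-theoretic disjointness are preserved under base change (as already observed in Prop.~\ref{subtractions_pullback}); and they preserve the relevant cocartesian diagrams along cofibrations, since pushouts of closed immersions in $\mbf{Var}_{/k}$ commute with flat base change (in particular, with the flat base change $X \times_k -$). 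Condition (3), the pushout-product axiom, is exactly the content of the immediately preceding proposition, whose proof reduces to the pushout-product axiom in the SW-category structure on $\mbf{Var}_{/k}$ that was established earlier.

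With biexactness in hand, the induced maps $\widetilde{S}^Q_\bullet \mbf{Var}_{/k} \times \widetilde{S}^{Q'}_\bullet \mbf{Var}_{/k} \to \widetilde{S}^{Q \amalg Q'}_\bullet \mbf{Var}_{/k}$ from the preceding definition descend to a $\Sigma_m \times \Sigma_n$-equivariant smash product pairing on the symmetric spectrum $K(\mbf{Var}_{/k})$. Rigidifying the symmetric monoidal structure on $\mbf{Var}_{/k}$ to an equivalent permutative structure $\overline{\mbf{Var}}_{/k}$ (the equivalence carries the SW-structure across, and the resulting product is still biexact since biexactness is a property preserved by the equivalence) and applying the cited theorem produces the $E_\infty$-ring structure on $K(\overline{\mbf{Var}}_{/k}) \simeq K(\mbf{Var}_{/k})$. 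The only mildly subtle point is condition (2) of biexactness, where one must confirm that the product commutes with the pushouts used to define subtraction and that it respects the functoriality-of-subtraction axiom; both follow from flatness of base change along $X \times_k -$ combined with the scheme-theoretic facts collected in Section 2, so no genuinely new geometric input is needed.
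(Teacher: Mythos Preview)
Your proposal is correct and follows essentially the same approach as the paper: the paper asserts that conditions (1) and (2) of biexactness are clear, proves condition (3) as the immediately preceding proposition, and then the corollary follows directly from the cited Blumberg--Mandell / Geisser--Hesselholt theorem. Your write-up is somewhat more explicit about condition (2) than the paper (which simply declares it ``clear''), but the overall structure is the same.
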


\section{Maps out of $K(\mbf{Var}_{/k})$}

We come to the main point of the paper, which is to produce derived motivic measures, i.e. maps out of $K(\mbf{Var}_{/k})$. Even the structure of $K_0 (\mbf{Var}_{/k})$ is difficult to get one's hands on, and the progress made thus far has been through uses of motivic measures (see, e.g. \cite{larsen_lunts} for a beautiful example). In order to figure out the structure of the higher homotopy groups of $K(\mbf{Var}_{/k})$, it thus seems necessary to produce higher motivic measures. These maps take the form of spectrum maps $K(\mbf{Var}_{/k}) \to R$ where $R$ is any spectrum. Given a map of this form, we could take components to obtain $K_0 (\mbf{Var}_{/k}) \to \pi_0 R$ which is a classical motivic measure. As a first attempt at producing derived motivic meaures, we could thus ask for ones that lift known classical motivic measures. In this section, we will lift a number of classical motivic measures to such spectrum maps. This shows that in many known cases, classical motivic measures are the shadow of a much richer homotopical picture. 

Before we begin, an example illustrates the issue we will contend with:

\begin{example}
  Consider the category of complex varieties $\mbf{Var}_{/\mbf{C}}$. There is a motivic measure $K_0 (\mbf{Var}_{/\C})$ to $\mbf{Q}$-vector spaces obtained by taking compactly supported cohomology with $\mbf{Q}$-coefficients. For subtraction sequences $Z \hookrightarrow X \leftarrow X- Z$ this procedure is covariant with respect to closed inclusions and contravariant with respect to open inclusions and yields long exact sequences
  \[
\cdots \to  H^i_c (Z) \to H^i_c (X) \to H^i_c (X-Z) \to H^{i+1}_c (X) \to \cdots
\]
and so if we assign
\[
X \mapsto \chi(X) : = \sum [H^i_c (X;\mbf{Q})] \in K_0 (\mbf{Vect}_{\Q})
\]
we get a well-defined motivic measure.
\end{example}

To obtain a map $K(\mbf{Var}_{/k}) \to K(\mc{C})$ where $\mc{C}$ is a Waldhausen category, we need a map from the simplicial set $i \widetilde{S}_\bullet \mbf{Var}_{/k}$ into the simplicial sets $w S_\bullet \mc{C}$. In order to have such maps,  we will have to use functors that behave differently with respect to open and closed inclusions, because of the differences in vertical arrows in the respective $S_\bullet$-constructions. In fact, we'll have to deal with functors that are only \textit{really} functors on the subcategory of closed inclusions and subcategory of open inclusions, respectively.

The definition below is inspired by proper base change theorems in algebraic geometry. It was suggested to the author by Jesse Wolfson. He also pointed out that it is quite close to \cite[Defn. 3.3]{getzler}.

\begin{defn}\label{w_exact}
  Let $\mc{C}$ be an SW-ccategory and let $\mc{W}$ be a Waldhausen category. We define a \textbf{W-exact functor} from $\mc{C}$ to $\mc{W}$ to be a pair of functors $(F_!, F^!)$ such that
  \begin{enumerate}
  \item $F_!$ is a functor $F_!: \mbf{co}(\mc{C}) \to \mc{W}$. For $i$ a map we often denote $F_! (i)$ by $i_!$. 
  \item $F^!$ is a functor $F^!: \mbf{fib}(\mc{C})^{\text{op}} \to \mc{W}$. For $j$ a map we often denote $F^!(j)$ by $j^!$. 
  \item $F_! (X) = F^! (X)$ for $X \in \mc{C}$. We denote the common value by $F(X)$. 
  \item (\textbf{base change}) The cartesian diagram in $\mc{C}$
    \[
    \xymatrix{
      X \ar@{^{(}->}[d]_i \ar[r]^j_{\circ} & Z \ar@{^{(}->}[d]^{i'}\\
      Y \ar[r]^{\circ}_{j'} & W
      }
    \]
    produces a diagram
    \[
    \xymatrix{
      F(X)   \ar[d]_{i_!} & F(Z) \ar[d]^{(i')_!} \ar[l]_{j^!}\\
      F(Y)  & F(W) \ar[l]^{(j')^!}
    }
    \]
    and we require that the diagram commute, i.e. 
    \[
    i_! \circ j^! = (j')^! \circ (i')_! 
    \]
  \item (\textbf{excision}) For a subtraction sequence
    \[
    \xymatrix{
      X \ar@{^{(}->}[r]^i  & Y & Y - X \ar[l]^{\circ}_j
    }
    \]
    the induced sequence
    \[
    F (X) \xrightarrow{i_!} F (Y) \xrightarrow{j^!} F(Y - X)
    \]
    is a cofiber sequence in $\mc{W}$. 
  \end{enumerate}
For ease, we will write a W-exact functor as $(F_!, F^!): \mc{C} \to \mc{W}$ with the understanding that there is no underlying functor on the category $\mc{C}$. 
\end{defn}

We record the following consequence of the definition

\begin{prop}\label{pseudoexact}
  Given a W-exact functor, there is a map of simplicial sets $i \widetilde{S}_\bullet \mc{C} \to  w S_\bullet \mc{W}$ which induces a map of spectra \[K(\mc{C}) \to K(\mc{W}).\]
\end{prop}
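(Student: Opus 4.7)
The plan is to define $\Phi \colon i_\bullet \widetilde{S}_\bullet \mc{C} \to w_\bullet S_\bullet \mc{W}$ directly on (multi-)simplices, using $F_!$ for the cofibration direction and $F^!$ for the fibration direction, and then to extend the construction across the multi-simplicial layers of Definition~\ref{iterated_S_dot_construction}.

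First I would describe $\Phi$ on an $n$-simplex. Given $X \colon \widetilde{\operatorname{Ar}}[n] \to \mc{C}$, define $\Phi(X) \colon \operatorname{Ar}[n] \to \mc{W}$ on objects by $\Phi(X)(i,j) = F(X_{i,j})$, which is well-defined since $F_!$ and $F^!$ agree on objects. On a horizontal generator $(i,j) \to (i,k)$, which comes from a closed immersion $X_{i,j} \hookrightarrow X_{i,k}$ in $\mc{C}$, assign $F_!$ of it. On a vertical generator $(i,k) \to (j,k)$ (with $i \leq j$), which corresponds in $\widetilde{\operatorname{Ar}}[n]$ to the reversed open immersion $X_{j,k} \xrightarrow{\circ} X_{i,k}$, assign $F^!$ of that map.

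Next I would verify the three conditions for $\Phi(X) \in S_n \mc{W}$. The condition $\Phi(X)(i,i) = \ast$ reduces to $F(\emptyset) = \ast$, which one deduces from excision applied to the trivial subtraction sequence $\emptyset \hookrightarrow \emptyset \xleftarrow{\circ} \emptyset$. The horizontal maps being cofibrations and the squares
\[
\xymatrix{
F(X_{i,j}) \ar[r]\ar[d] & F(X_{i,k}) \ar[d]\\
\ast \ar[r] & F(X_{j,k})
}
\]
being cocartesian are both immediate consequences of the excision axiom applied to $X_{i,j} \hookrightarrow X_{i,k} \xleftarrow{\circ} X_{j,k}$. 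The remaining point is functoriality of $\Phi(X)$: any arrow $(i,j) \to (i',j')$ in $\operatorname{Ar}[n]$ factors as horizontal-then-vertical, so well-definedness reduces to the commutativity in $\mc{W}$ of
\[
\xymatrix{
F(X_{i,j'}) \ar[r]^{F_!} \ar[d]_{F^!} & F(X_{i,j''}) \ar[d]^{F^!}\\
F(X_{i',j'}) \ar[r]_{F_!} & F(X_{i',j''})
}
\]
which is precisely the base change axiom of Definition~\ref{w_exact} applied to the cartesian squares in $\mc{C}$ guaranteed by the last clause of Definition~\ref{s_dot}. This base change/functoriality step is the one substantive piece of the argument; everything else is diagram bookkeeping.

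Finally, the simplicial face and degeneracy maps on both sides are row/column deletion and identity insertion, which $\Phi$ manifestly respects. An isomorphism in $\widetilde{S}_n \mc{C}$ is componentwise an isomorphism (which lies in $\mbf{co}(\mc{C})$), and $F_!$ turns it into a componentwise isomorphism in $\mc{W}$, hence a weak equivalence in $w S_n \mc{W}$. This produces the map of bisimplicial sets, and consequently a map $K(\mc{C}) \to K(\mc{W})$ on $K$-theory spaces. For the spectrum statement, the same vertex-wise assignment extends to the multi-simplicial setup of Definition~\ref{iterated_S_dot_construction}: along each of the $k$ coordinates, excision supplies the cofiber/cocartesian conditions and base change supplies the cartesianness conditions, while the $\Sigma_k$-action permuting coordinates is clearly preserved. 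The resulting map of symmetric spectra is the desired lift.
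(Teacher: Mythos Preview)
Your proposal is correct and follows essentially the same approach as the paper: apply $F_!$ to the horizontal cofibrations and $F^!$ to the vertical fibrations in an $n$-simplex of $\widetilde{S}_\bullet \mc{C}$, then invoke excision to obtain the required cofiber sequences and base change to ensure the resulting diagram is a well-defined functor on $\operatorname{Ar}[n]$. If anything, your write-up is more careful than the paper's terse version---you explicitly verify functoriality via base change, address $F(\emptyset)\simeq\ast$, handle the weak-equivalence direction, and sketch the multi-simplicial extension to the spectrum level, all of which the paper leaves implicit.
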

\begin{proof}
  Consider an $n$-simplex $X \in \widetilde{S}_n \mc{C}$. Recall (Defn. \ref{s_dot}) that this means that $X$ is a functor $X: \widetilde{\operatorname{Ar}}[n] \to \mc{C}$ such that $X_{j, j} = \emptyset$ and every sub-diagram $X_{i,j} \to X_{i, k} \leftarrow X_{j, k}$ is a subtraction sequence.

  Apply $F_!$ to every cofibration and $F^!$ to every fibration in the diagram $X$. We note that by definition of $W$-exact functor, $F(X_{i,j}) \to F(X_{i,k})$ will be a cofibration in $\mc{W}$ and 
  \[
  F(X_{i,j}) \to F(X_{i,k}) \to F(X_{j,k})
  \]
  will be a cofiber sequence in $\mc{W}$. Thus, the image of $F$ lies in $S_\bullet \mc{W}$. 
\end{proof}

We also need a dual definition to prove maps \textit{from} a Waldhausen category to an SW-category. This situation seems to arise less commonly in practice, but will be useful below (Thm.\ref{splitting}). 

\begin{defn}
  Let $\mc{W}$ be a Waldhausen category ad $\mc{C}$ an SW-category. An \textbf{op-W-exact functor} is a pair of functors $(G_\ast, G^\ast)$ such that
  \begin{enumerate}
  \item $G_\ast$ is a functor $G_\ast : \text{co}(\mc{W}) \to \mc{C}$
  \item $G^\ast$ is a functor $G^\ast: \text{fib}(\mc{W})^{\text{op}} \to \mc{C}$
  \item For $X \in \mc{W}$, $G^\ast (X) = G_\ast (X)$. We refer to the common value as $G(X)$. 
  \item Given a diagram in $\mc{W}$
    \[
    \xymatrix{
      X \ar@{->>}[d]_j\ar[r]^i & Z \ar@{->>}[d]_{j'}\\
      Y \ar[r]_{i'} & W
    }
    \]
    where the horizontal maps are cofibrations and vertical maps are fibrations, we get the corresponding diagram in $\mc{C}$
    \[
    \xymatrix{
      G(X) \ar[r]^{i_\ast} & G(Z) \\
      G(Y) \ar[u]_{j^\ast} \ar[r]_{(i')_\ast} & G(W)\ar[u]_{(j')^\ast}
    }
    \]
    We require that the diagram commute, i.e. 
    \[
    i_\ast \circ j^\ast = (j')^\ast \circ (i')_\ast 
    \]
   \item Given a cofiber sequence in $\mc{W}$
     \[
     \xymatrix{
       X \ar[r]^i & Y \ar@{->>}[r]^j & Z 
     }
     \]
     we get a subtraction sequence in $\mc{C}$
     \[
       G(X) \xrightarrow{i_\ast}  G(Y) \xleftarrow{j^\ast}  G (Y - X)
     \]
  \end{enumerate}
\end{defn}

\begin{rmk}
Because of the rigidity of the category of varieties, these will be harder to produce in practice, in fact, the only example we know is the one below. 
\end{rmk}

By a proof entirely dual to Thm. \ref{pseudoexact}, we obtain

\begin{thm}\label{op_W_exact}
  Given a Waldhausen category $\mc{C}$, an SW-category $\mc{W}$ and an op-$W$-exact functor $(G_\ast, G^\ast)$ we get a map on $K$-theory spectra
  \[
  K(\mc{W}) \to K(\mc{C}). 
  \]
\end{thm}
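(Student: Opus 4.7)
The plan is to mirror the construction in Proposition \ref{pseudoexact}, with the roles of cofibrations and fibrations reversed. Start with an $n$-simplex $Y$ in the Waldhausen simplicial set $w S_\bullet \mc{W}$, namely a functor $Y: \operatorname{Ar}[n] \to \mc{W}$ with $Y_{i,i}=\ast$, cofibrations along rows, and cofiber sequences $Y_{i,j} \hookrightarrow Y_{i,k} \twoheadrightarrow Y_{j,k}$ along columns. I would define $\Phi(Y): \widetilde{\operatorname{Ar}}[n] \to \mc{C}$ by setting $\Phi(Y)_{i,j} = G(Y_{i,j})$, applying $G_\ast$ to each horizontal cofibration of $Y$ and applying $G^\ast$ to each vertical quotient fibration, so that the image diagram has the correct subtraction orientation.

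The next step is to verify that $\Phi(Y)$ lies in $\widetilde{S}_n \mc{C}$ in the sense of Definition \ref{s_dot}. The initial-object condition $\Phi(Y)_{i,i}=\emptyset$ follows because op-W-exactness applied to the degenerate cofiber sequence $\ast \hookrightarrow \ast \twoheadrightarrow \ast$ forces $G(\ast)=\emptyset$. The subtraction-sequence condition $\Phi(Y)_{i,j} \hookrightarrow \Phi(Y)_{i,k} \leftarrow \Phi(Y)_{j,k}$ is the direct content of the excision axiom. The cartesian condition for $i<j<k<l$ requires more care: starting from the subtraction sequence $\Phi(Y)_{i,j} \hookrightarrow \Phi(Y)_{i,l} \leftarrow \Phi(Y)_{j,l}$ and pulling back along the cofibration $\Phi(Y)_{i,k} \hookrightarrow \Phi(Y)_{i,l}$ produces a subtraction sequence $\Phi(Y)_{i,j} \hookrightarrow \Phi(Y)_{i,k} \leftarrow \Phi(Y)_{i,k} \times_{\Phi(Y)_{i,l}} \Phi(Y)_{j,l}$ by the base-change axiom for SW-categories; by uniqueness of subtraction in $\mc{C}$, this pullback must equal $\Phi(Y)_{j,k}$, which is exactly the cartesianness required.

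Once $\Phi$ is defined on $n$-simplices, the simplicial identities follow from separate functoriality of $G_\ast$ on cofibrations and $G^\ast$ on fibrations, together with the commutativity of all base-change squares guaranteed by op-W-exactness. Naturality in weak equivalences yields a map of bisimplicial sets $w S_\bullet \mc{W} \to w \widetilde{S}_\bullet \mc{C}$. Iterating this construction in each of $k$ simplicial directions independently, as in Definition \ref{iterated_S_dot_construction}, gives $\Sigma_k$-equivariant maps compatible with the quasi-fibrant symmetric spectrum structure, assembling into the desired map $K(\mc{W}) \to K(\mc{C})$.

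The main obstacle is the cartesian-square condition, since op-W-exactness asserts only commutativity of base-change squares, not cartesianness. The argument circumvents this by using the fact that subtraction in an SW-category is unique up to unique isomorphism and is preserved by pullback, so cartesianness is forced by the SW-structure of the target rather than extracted directly from the functor's axioms. A secondary technicality is ensuring the pullback used actually exists in the SW-target; this is guaranteed by the pullback axiom of a category with subtraction (Definition \ref{cat_w_cofibs}(3)).
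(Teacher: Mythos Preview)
Your proposal is correct and follows the same approach as the paper, which simply states ``by a proof entirely dual to Thm.~\ref{pseudoexact}'' and gives no further details. Your write-up is in fact more careful than the paper's one-line justification: the target $\widetilde{S}_\bullet \mc{C}$ carries the extra cartesian-square condition of Defn.~\ref{s_dot}(4), which has no analogue in the Waldhausen $S_\bullet$-construction, and you correctly identify and dispatch this asymmetry using uniqueness of subtraction together with the base-change axiom for subtraction sequences in $\mc{C}$. One small point: your deduction that $G(\ast)=\emptyset$ from the degenerate cofiber sequence is fine in spirit, but relies on knowing that $X\hookrightarrow X \xleftarrow{\circ} \emptyset$ is always a subtraction sequence in an SW-category; this is not literally among the axioms of Defn.~\ref{cat_w_cofibs}, though it follows easily (and holds in all examples) once one unwinds axiom~4(b) for the identity cofibration.
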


In the subsections below, we will have occasion to use the category of pointed finite sets a number of times, so it worth defining before we get to work. 

\begin{defn}
Let $\mbf{FinSet}_+$ be the category of pointed finite sets. We choose a skeleton of it so that the objects are the pointed sets with $n$-elements $[\mbf{n}]_+$. Morphisms are maps preserving the basepoint, which we denote $\ast$. 
\end{defn}

The salient property of this category for us is the following celebrated theorem.

\begin{thm}[Barratt-Priddy-Quillen]
  Consider $\mbf{FinSet}_+$ as a Waldhausen category by defining cofibrations to be injective maps. Then
  \[
  K(\mbf{FinSet}_+) \simeq S
  \]
  where $S$ is the sphere spectrum. 
\end{thm}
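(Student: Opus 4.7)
The plan is to reduce $K(\mathbf{FinSet}_+)$ to a group completion and then invoke the classical Barratt--Priddy--Quillen computation of the group completion of the symmetric groupoid of finite sets.

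First I would observe that in $\mathbf{FinSet}_+$ every cofibration splits. Indeed, for an injection $i: A_+ \hookrightarrow B_+$ of pointed finite sets, the quotient $B_+/A_+$ is simply the complement with a disjoint basepoint, and there is a canonical isomorphism $B_+ \cong A_+ \vee (B_+/A_+)$ (choose the inclusion of $B_+/A_+$ into $B_+$ sending the basepoint to the basepoint and every other element to itself in $B \setminus A$). Consequently every cofiber sequence in $\mathbf{FinSet}_+$ is split, so in the language of Waldhausen \cite[\S 1.8]{waldhausen}, $\mathbf{FinSet}_+$ is a \emph{split exact} Waldhausen category with the sum $\vee$ acting as a biexact symmetric monoidal structure.

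Next I would apply Waldhausen's ``group completion'' identification \cite[Prop.\ 1.8]{waldhausen}: when every cofiber sequence splits, there is a natural equivalence
\[
K(\mathbf{FinSet}_+) \;\simeq\; \Omega B\bigl(N_\bullet(i\mathbf{FinSet}_+)\bigr),
\]
where the right-hand side is the group completion (bar construction) of the nerve of the underlying symmetric monoidal groupoid $(i\mathbf{FinSet}_+, \vee, \ast)$. The classifying space of this permutative groupoid is the familiar
\[
|N_\bullet(i\mathbf{FinSet}_+)| \;\simeq\; \coprod_{n \geq 0} B\Sigma_n,
\]
since the objects up to isomorphism are indexed by $n \in \mathbf{Z}_{\geq 0}$, and the automorphism group of $[\mathbf{n}]_+$ is the symmetric group $\Sigma_n$.

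Finally I would invoke the classical Barratt--Priddy--Quillen theorem, which identifies the group completion of $\coprod_n B\Sigma_n$ under $\vee$ with the infinite loop space of the sphere spectrum:
\[
\Omega B\Bigl(\coprod_{n \geq 0} B\Sigma_n\Bigr) \;\simeq\; QS^0 \;=\; \Omega^\infty S.
\]
Combining these equivalences, and noting that both sides are the underlying infinite loop spaces of $E_\infty$-ring spectra with compatible multiplicative structure, gives the spectrum-level equivalence $K(\mathbf{FinSet}_+) \simeq S$. The main obstacle is essentially bookkeeping: ensuring that Waldhausen's split-exact comparison (which is stated for ordinary Waldhausen categories with a zero object) applies cleanly here, but since $\mathbf{FinSet}_+$ is a genuine Waldhausen category (the basepoint being the zero object), this is a direct citation rather than new work.
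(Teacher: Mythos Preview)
The paper does not give a proof of this statement at all: it is quoted as the ``celebrated'' Barratt--Priddy--Quillen theorem and used as a black box. So there is no proof in the paper to compare your proposal against.

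That said, your sketch is a standard and correct route to the result. The reduction via Waldhausen's split-exact comparison (every cofiber sequence in $\mathbf{FinSet}_+$ splits, so $K$-theory agrees with the group completion of the symmetric monoidal groupoid $(i\mathbf{FinSet}_+,\vee)$) followed by the classical identification $\Omega B(\coprod_n B\Sigma_n)\simeq QS^0$ is exactly how one typically deduces the Waldhausen-category formulation from the original Barratt--Priddy--Quillen theorem. One small correction: the precise reference in Waldhausen is \cite[Prop.~1.8.7 and the discussion in \S1.8]{waldhausen}, not ``Prop.~1.8''; and the spectrum-level statement (rather than just the infinite-loop-space statement) follows because the equivalence of $\Gamma$-spaces/permutative structures is compatible with the deloopings on both sides, which is implicit in Waldhausen's argument.
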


Thus, $\mbf{FinSet}_+$ will be our category-level model of the sphere spectrum.

Below it will be necessary to view $\mbf{FinSet}_+$ as a Waldhausen category and also to understand some of its combinatorics.

First, we note that $\mbf{FinSet}_+$ can be made into a Waldhausen category by declaring that cofibrations are monomorphisms and weak equivalences are isomorphisms. We record the following definition for future use.

\begin{defn}
  A map $p: [\mbf{n}_1]_+ \to [\mbf{n}_2]_+$ will be said to be a \textbf{fibration} if it arises as a pushout
  \[
  \xymatrix{
    [\mbf{n}_0]_+ \ar[r]^i\ar[d] & [\mbf{n}_1]_+\ar[d]_p\\
    \ast \ar[r] & [\mbf{n}_2]_+
  }
  \]
  where $i$ is a cofibration. More concretely, $p$ is a fibration if it is surjective and for $i \in [\mbf{n}_2]_+$, $p^{-1}(i)$ has one element. 
\end{defn}

We define two flavors of wrong way maps in $\mbf{FinSet}_+$. 

\begin{defn}\label{cofibration_backward_finset}
Let $f: [\mbf{n}_1]_+ \to [\mbf{n}_2]_+$ be a monomorphism in $\mbf{FinSet}_+$. We define $f^\ast: [\mbf{n}_2]_+ \to [\mbf{n}_1]_+$ by mapping the corange to the basepoint and each $i \in \im(f)$ to $f^{-1}(i)$. 
\end{defn}

\begin{defn}\label{fibration_backward_finset}
Let $p: [\mbf{n}_1]_+ \to [\mbf{n}_2]_+$ be a fibration. We define $p^\ast$ as follows. For $i \in \im(p)$, define $p^\ast (i) = p^{-1}(i)$ and then map the basepoint to the basepoint. 
\end{defn}

We now consider commutative diagrams
\[
\xymatrix{
  [\mbf{n}_1]_+ \ar@{->>}[d]_{p_1} \ar@{^{(}->}[r]^{i_1}  & [\mbf{n}_2]_+\ar@{->>}[d]^{p_2}\\
  [\mbf{n}_3]_+ \ar@{^{(}->}[r]_{i_2} & [\mbf{n}_4]_+
}
\]
Commutativity in this case means that $p^{-1}_1 (\ast) = i^{-1}_1 (p^{-1}_2 (\ast))$ and that for $i \in [\mbf{n}_4]_+$, $(i_2 \circ p_1)^{-1}(i) = (p_2 \circ i_1)^{-1} (i)$.

This observation has the following simple, but useful, consequence.

\begin{lem}\label{backwards_commute_finset}
  Given a commutative diagram as above, the following also commutes
  \[
  \xymatrix{
    [\mbf{n}_1]_+ \ar[r]^{i_1} & [\mbf{n}_2]\\
    [\mbf{n}_3]_+ \ar[u]^{p^\ast_1}\ar[r]_{i_2} & [\mbf{n}_4]_+ \ar[u]_{p^\ast_2}
  }
  \]
\end{lem}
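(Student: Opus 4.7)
The plan is to unwind the definition of $p^{\ast}$ for fibrations and verify the identity $i_1 \circ p_1^{\ast} = p_2^{\ast} \circ i_2$ pointwise on $[\mbf{n}_3]_+$, splitting into the basepoint case and the non-basepoint case. The input is the pointwise commutativity $i_2 \circ p_1 = p_2 \circ i_1$ together with the defining feature of a fibration: fibers over non-basepoint elements are singletons.

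First I would handle the basepoint. By Definition \ref{fibration_backward_finset}, $p_1^{\ast}(\ast) = \ast$, and since $i_1, i_2$ preserve basepoints, both composites send $\ast$ to $\ast$.

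For a non-basepoint $j \in [\mbf{n}_3]_+$, surjectivity of the fibration $p_1$ and the fiber condition give a unique $x \in [\mbf{n}_1]_+$ with $p_1(x) = j$, so $p_1^{\ast}(j) = x$ and hence $i_1(p_1^{\ast}(j)) = i_1(x)$. On the other side, $i_2(j)$ is non-basepoint because $i_2$ is a monomorphism preserving basepoints, so $p_2^{-1}(i_2(j))$ is a singleton by the fibration hypothesis on $p_2$. By the commutativity of the original square, $p_2(i_1(x)) = i_2(p_1(x)) = i_2(j)$, so $i_1(x)$ lies in this singleton fiber, forcing $p_2^{\ast}(i_2(j)) = i_1(x)$. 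This matches $i_1(p_1^{\ast}(j))$ and completes the verification.

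The argument is essentially a bookkeeping exercise, so there is no real obstacle; the only point requiring care is confirming that $i_2(j)$ is non-basepoint whenever $j$ is, so that the singleton-fiber property of $p_2$ is applicable. This follows from the combination of basepoint preservation and injectivity of $i_2$.
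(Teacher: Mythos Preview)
Your proof is correct and follows the same elementary pointwise verification the paper intends; the paper itself leaves the lemma without a written proof, stating only that it is a consequence of the preceding observation about preimages, so your argument is simply a more explicit version of what the paper leaves implicit.
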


\subsection{The Unit Map}

Since it is a spectrum, $K(\mbf{Var}_{/k})$ naturally has a unit map from the sphere spectrum $S \to K(\mbf{Var}_{/k})$. It will be useful for us to have a model for this map. When working with $K$-theoretic functors, finite pointed sets are always a proxy for the sphere spectrum, by Barrat-Priddy-Quillen. We construct functors out of this category to model maps out of the sphere spectrum. 

\begin{defn}
  We define an op-W-exact functor $(G_*,G^*): \mbf{FinSet}_+ \to \mbf{Var}_{/k}$ as follows.

  \begin{enumerate}
  \item $G_\ast: \mbf{FinSet}_+ \to \mbf{Var}_{/k}$ is defined on objects by
    \[
    G_\ast ([\mbf{n}_1]) = \coprod^{n_1}_{i=0} \Spec (k). 
    \]
    One cofibrations, i.e. inclusions it is defined by the corresponding inclusions of of $\Spec (k)$s. On fibrations, it is defined by the corresponding fold maps.

  \item $G^\ast: \mbf{FinSet}_+ \to \mbf{Var}_{/k}$ is defined by objects as above. Given a cofibration $i: [\mbf{n}_1]_+ \to [\mbf{n}_2]_+$, we define $G^\ast$ to be $G_\ast (i^\ast)$ with $i^\ast$ defined as in \ref{cofibration_backward_finset}. Given a fibration $p: [\mbf{n}_1]_+ \to [\mbf{n}_2]_+$ we define $G^\ast (p)$ to be $G_\ast (p^\ast)$ with $p^\ast$ defined as in \ref{fibration_backward_finset}.
  \end{enumerate}
\end{defn}

\begin{prop}
The map above is in fact op-W-exact. 
\end{prop}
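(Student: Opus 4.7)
The plan is to verify the five clauses of Definition \ref{w_exact} in turn, with almost all of the work reduced to elementary combinatorial book-keeping in $\mbf{FinSet}_+$ together with the fact that disjoint unions of copies of $\Spec(k)$ behave cleanly under both closed and open immersions. Agreement on objects ($G_!(X)=G^!(X)$) is immediate from the construction, so only functoriality, base change and excision require argument.

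First I would check that $G_\ast$ is a functor on $\mbf{co}(\mbf{FinSet}_+)$: composition of inclusions of pointed finite sets is an inclusion, and passing to the corresponding inclusion of copies of $\Spec(k)$ is plainly functorial. For $G^\ast$ to be a contravariant functor on $\mbf{fib}(\mbf{FinSet}_+)$, the essential step is to check that for composable fibrations $p_1$ and $p_2$ one has $(p_2\circ p_1)^\ast = p_1^\ast\circ p_2^\ast$ as maps in $\mbf{FinSet}_+$. This follows directly from Definition \ref{fibration_backward_finset}: a non-basepoint $i$ in the target has a unique preimage under $p_2$ (necessarily non-basepoint), which in turn has a unique preimage under $p_1$. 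Each $p^\ast$ is an injection, hence a cofibration, so $G^\ast(p):=G_\ast(p^\ast)$ yields a contravariant functor by functoriality of $G_\ast$ established above.

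For the base-change axiom, given any commuting square in $\mbf{FinSet}_+$ with horizontal cofibrations $i_1,i_2$ and vertical fibrations $p_1,p_2$, Lemma \ref{backwards_commute_finset} provides an upside-down commuting square with the same horizontal cofibrations and vertical maps $p_1^\ast,p_2^\ast$. All four arrows are cofibrations, so applying the honest functor $G_\ast$ yields a commuting square in $\mbf{Var}_{/k}$, which is precisely the base-change identity $i_!\circ j^! = (j')^!\circ(i')_!$ required by Definition \ref{w_exact}(4).

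Finally, for excision, a cofiber sequence in $\mbf{FinSet}_+$ has the form $[\mbf{m}]_+ \hookrightarrow [\mbf{n}]_+ \twoheadrightarrow [\mbf{n}-\mbf{m}]_+$, where $p$ collapses the image of $i$ to the basepoint. Applying $G$ produces a disjoint union $G([\mbf{n}]_+)=G([\mbf{m}]_+)\amalg G([\mbf{n}-\mbf{m}]_+)$ of copies of $\Spec(k)$, in which $i_!$ picks out one family of components and $p^!$ (via Definition \ref{fibration_backward_finset}) picks out the complementary family. Each component $\Spec(k)$ of a finite disjoint union of copies of $\Spec(k)$ is simultaneously open and closed, so $i_!$ is a closed immersion, $p^!$ an open immersion, and the underlying topological space of $G([\mbf{n}]_+)$ decomposes as the disjoint union of the underlying spaces of $G([\mbf{m}]_+)$ and $G([\mbf{n}-\mbf{m}]_+)$; by Definition \ref{subtraction_sequence_varieties} this is a subtraction sequence. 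The only obstacle worth flagging is bookkeeping of the basepoint component: one must adopt the convention that the basepoint contributes $\emptyset$ (equivalently, indexing the coproduct over non-basepoint elements) so that $G$ takes the zero object $\ast\in\mbf{FinSet}_+$ to the initial object $\emptyset$; once this convention is fixed, the rest of the verification goes through uniformly.
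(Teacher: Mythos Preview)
Your argument is correct and follows the same route as the paper: the key step is the base-change axiom, and both you and the paper reduce it to Lemma~\ref{backwards_commute_finset} applied to the square in $\mbf{FinSet}_+$, then push forward by the honest functor $G_\ast$. You are in fact more thorough than the paper's proof, which declares conditions (1)--(3) trivial, checks (4) via the lemma, and leaves (5) implicit; your explicit verification of functoriality of $(-)^\ast$ on fibrations and of the excision clause is a welcome addition. One cosmetic point: you cite Definition~\ref{w_exact} and use the $(-)_!,(-)^!$ notation in a couple of places, but the relevant definition here is the op-$W$-exact one with $(-)_\ast,(-)^\ast$; the substance of what you check is correct, only the labels need adjusting.
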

\begin{proof}
  The first three conditions are trivial. To check the 4th, we consider a diagram in $\mbf{FinSet}_+$
  \[
  \xymatrix{
    [\mbf{n}_1]_+ \ar@{->>}[d]_j \ar[r]^i & [\mbf{n}_2]_+ \ar@{->>}[d]^{j'}\\
    [\mbf{n}_3]_+ \ar[r]_{i'} & [\mbf{n}_4]_+
    }
  \]
  This induces a diagram of varieties
  \[
  \xymatrix{
  G([\mbf{n}_1]_+) \ar[r]^{G_\ast (i)}  & G([\mbf{n}_2]_+)\\
  G([\mbf{n}_3]_+) \ar[r]_{G_\ast (i')}\ar[u]^{G^\ast (j)}   & G([\mbf{n}_4]_+) \ar[u]_{G^\ast (j')}
  }
  \]
  We now check that the two maps we need to agree in fact agree. That is, we need
  \[
G_\ast (i) \circ G^\ast (j) = G^\ast (j') \circ G_\ast (i')
  \]

  However, this is the content of Lem \ref{backwards_commute_finset}. 
 
\end{proof}

\begin{cor}
The op-W-exact functors descend to a map of spectra $S \to K(\mbf{Var}_{/k})$. 
\end{cor}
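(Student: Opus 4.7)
The proof is essentially a direct application of two results already in hand. First, I would invoke Theorem \ref{op_W_exact}: since we have just verified that the pair $(G_\ast, G^\ast)$ is op-W-exact, that theorem immediately produces a map of $K$-theory spectra
\[
K(\mbf{FinSet}_+) \to K(\mbf{Var}_{/k}).
\]
Concretely, this map is built by sending a flag in $w_\bullet S_\bullet \mbf{FinSet}_+$ to its image under $G_\ast$ on cofibrations and $G^\ast$ on fibrations; the commutativity and cofiber/subtraction sequence axioms of op-W-exactness are exactly what is needed to ensure this assignment lands in $i \widetilde{S}_\bullet \mbf{Var}_{/k}$.

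Next, I would identify the source by the Barratt--Priddy--Quillen theorem stated above, which gives a weak equivalence $K(\mbf{FinSet}_+) \simeq S$. Composing the inverse of this equivalence with the map produced in the previous step yields the desired map
\[
S \simeq K(\mbf{FinSet}_+) \to K(\mbf{Var}_{/k}).
\]

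Finally, I would verify that this is the unit map on the level of $\pi_0$, so that the resulting map is genuinely the one advertised in the main theorem of the introduction. On components, $\pi_0 S = \Z$ and $\pi_0 K(\mbf{Var}_{/k}) = K_0(\mbf{Var}_{/k})$ by the earlier proposition. The generator $1 \in \Z$ corresponds under Barratt--Priddy--Quillen to the class of the one-point set $[\mbf{1}]_+$, which $G_\ast$ sends to $\Spec(k)$; by additivity on disjoint unions, the class $[n] \in \Z$ maps to $[\coprod^n \Spec(k)] \in K_0(\mbf{Var}_{/k})$, as required.

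There is no real obstacle here beyond this $\pi_0$ check: the substantive work has already been done in proving op-W-exactness of $(G_\ast, G^\ast)$ and in Theorem \ref{op_W_exact}. The only place one must be careful is in keeping the variances straight when applying Barratt--Priddy--Quillen, since $\mbf{FinSet}_+$ carries both the ``forward'' Waldhausen structure (used to compute $K(\mbf{FinSet}_+)$) and the implicit subtractive structure via Definitions~\ref{cofibration_backward_finset} and~\ref{fibration_backward_finset} that makes $(G_\ast, G^\ast)$ compatible with the $\widetilde{S}_\bullet$-construction on the target.
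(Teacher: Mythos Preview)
Your proof is correct and follows essentially the same approach as the paper. The paper does not give an explicit proof of this corollary, treating it as an immediate consequence of the preceding proposition (that $(G_\ast, G^\ast)$ is op-W-exact), Theorem~\ref{op_W_exact}, and Barratt--Priddy--Quillen; your write-up simply spells out these steps and adds the $\pi_0$ verification, which is appropriate given the section's goal of modeling the unit map.
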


\begin{rmk}
  It is not hard to see that we get an $E_\infty$-map $S \to \mbf{Var}_{/k}$, but this will not be needed. 
\end{rmk}

\subsection{Point Counting}

One of the fundamental goals of algebraic geometry is to systematically count points on algebraic varieties over finite fields. This procedure would take an algebraic variety over a finite field $k$ and return the number of $k$-points $|X(k)|$. Such a procedure behaves well with respect to subtracting varieties, and so it descends to a motivic measure $K_0 (\mbf{Var}_{/k}) \to \Z$. This is the first motivic measure that we will lift. 

\begin{defn}
  Define a W-exact functor $(-(k)_!, -(k)^!): \mbf{Var}_{/k} \to \mbf{FinSet}_+$ as follows.  On objects, we define the functor to be $X(k)_+$, the set of $k$-points of $X$ with a  disjoint basepoint added. We assign a linear order to the points, once and for all. We assign closed inclusions $Z \hookrightarrow X$ to be the obvious inclusion $Z(k)_+ \to X(k)_+$. For open inclusions, $X \xleftarrow{\circ} Y$, define $X(k)_+ \to Y(k)_+$ by restriction coupled with the requirement that if $p \in X(k)$, but $p \notin Y(k)$ then $p$ maps to the basepoint. 
\end{defn}

\begin{prop}
The functors $-(k)^!$ and $-(k)_!$ assemble into a W-exact functor $\mbf{Var}_{/k} \to \mbf{FinSet}_+$. 
\end{prop}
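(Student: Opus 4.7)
The plan is to verify the five axioms of Definition \ref{w_exact} in order, treating the first three as essentially bookkeeping and then handling base change and excision by explicit set-level arguments.

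First I would check that $-(k)_!$ and $-(k)^!$ are genuine functors on their respective subcategories and that they agree on objects. For $-(k)_!$, a composition of closed immersions $Z \hookrightarrow Y \hookrightarrow X$ induces the evident inclusion $Z(k)_+ \hookrightarrow X(k)_+$, which is clearly the composite. For $-(k)^!$, the map assigned to an open immersion $Y \xrightarrow{\circ} X$ is $\rho_Y \colon X(k)_+ \to Y(k)_+$ sending $p \in Y(k) \subset X(k)$ to itself and all other points (including the disjoint basepoint of $X(k)_+$) to the basepoint. A composition of open immersions $Z \xrightarrow{\circ} Y \xrightarrow{\circ} X$ gives $\rho_Y \circ \rho_X = \rho_Z$, since a point of $X(k)$ lands in $Z(k)$ if and only if it lies in $Z$ as a subscheme of $X$. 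Agreement on objects is immediate.

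Next I would verify base change. Given a cartesian square
\[
\xymatrix{X \ar@{^{(}->}[d]_i \ar[r]^j_{\circ} & Z \ar@{^{(}->}[d]^{i'}\\ Y \ar[r]^{\circ}_{j'} & W,}
\]
the underlying set $X(k)$ coincides with the intersection $Y(k) \cap Z(k)$ inside $W(k)$, using that pullback of varieties computes pullback of $k$-points. Tracing a $k$-point $p \in Z(k)_+$: along $i_! \circ j^!$ one first sends $p$ to itself if $p \in X(k)$ (else to the basepoint) and then includes into $Y(k)_+$; along $(j')^! \circ (i')_!$ one includes $p$ into $W(k)_+$ and then restricts to $Y(k)_+$, landing on $p$ exactly when $p \in Y(k)$ inside $W(k)$, i.e.\ exactly when $p \in X(k)$. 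The two composites therefore agree on every $k$-point, and both send the basepoint to the basepoint.

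Finally I would check excision. For a subtraction sequence $X \hookrightarrow Y \xleftarrow{\circ} Y - X$, the underlying topological space of $Y$ is the disjoint union of those of $X$ and $Y - X$ by Definition \ref{subtraction_sequence_varieties}, so $Y(k) = X(k) \sqcup (Y-X)(k)$ as sets. Consequently the induced sequence
\[
X(k)_+ \xrightarrow{i_!} Y(k)_+ \xrightarrow{j^!} (Y-X)(k)_+
\]
is the inclusion of a pointed subset followed by the projection to its complement together with the basepoint, which is precisely a cofiber sequence in $\mathbf{FinSet}_+$.

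There is no serious obstacle here: the content is essentially that $X \mapsto X(k)$ converts the scheme-theoretic decomposition along a closed/open pair into a set-theoretic disjoint-union decomposition, and that pullbacks of varieties compute intersections of $k$-points. The only place that requires a moment's care is the bookkeeping of the added basepoint under base change, which is handled by the case analysis above.
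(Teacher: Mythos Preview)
Your proof is correct and follows essentially the same approach as the paper: direct verification of the axioms of Definition~\ref{w_exact} via set-level analysis of $k$-points. The only difference is that the paper handles the base change square by appealing to Lemma~\ref{backwards_commute_finset}, whereas you carry out the element chase explicitly (and you also spell out the functoriality and excision checks that the paper leaves implicit); the underlying argument is the same.
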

\begin{proof}
  We need to verify the conditions of Def. \ref{pseudoexact}. Suppose we have a commutative square
  \[
  \xymatrix{
    X \ar@{^{(}->}[r]^j\ar[d]^{\circ}_i & Z\ar[d]^{i'}_{\circ} \\
    Y \ar@{^{(}->}[r]_{j'} & W 
  }
  \]
  where $j, j'$ are closed and $i$,$i'$ are open. Then we get an induced square in $\mbf{FinSet}_+$
  \[
  \xymatrix{
  X(k)_+ \ar[r]^{j_!}  & Z(k)_+ \\
  Y(k)_+ \ar[r]_{(j')_!} \ar[u]^{i^!} & W(k)_+\ar[u]_{(i')_!}
  }
  \]
  which we would like to be commutative. However, this is a consequence of Lem. \ref{backwards_commute_finset}. 
\end{proof}

Note that if we have two $k$-varieties $X, Y$ then the number of $k$-points in $X \times_k Y$ is the product of the number of $k$-points in $X$ and $Y$. This product can be made functorial. 

\begin{thm}
  There is a map of spectra $K(\mbf{Var}_{/k}) \to S$ 
\end{thm}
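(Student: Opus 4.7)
The plan is to invoke the machinery already built in the paper and combine two ingredients: the W-exact functor just constructed, and the Barratt--Priddy--Quillen identification of $K(\mbf{FinSet}_+)$ with the sphere spectrum. Concretely, the preceding proposition verified that $(-(k)_!, -(k)^!)$ is a W-exact functor $\mbf{Var}_{/k} \to \mbf{FinSet}_+$, so Proposition \ref{pseudoexact} immediately produces a map of spectra
\[
K(\mbf{Var}_{/k}) \to K(\mbf{FinSet}_+).
\]
Composing with the Barratt--Priddy--Quillen equivalence $K(\mbf{FinSet}_+) \simeq S$ yields the desired map $K(\mbf{Var}_{/k}) \to S$.

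To justify that this really is the point-counting map on $\pi_0$, I would trace an element through the construction. An isomorphism class $[X] \in \pi_0 K(\mbf{Var}_{/k}) = K_0(\mbf{Var}_{/k})$ is represented by the $1$-simplex $\emptyset \hookrightarrow X$ in $\widetilde{S}_1 \mbf{Var}_{/k}$; under the W-exact functor this is sent to the $1$-simplex $\ast \hookrightarrow X(k)_+$ in $S_1 \mbf{FinSet}_+$, which represents the class $\#X(k) \in K_0(\mbf{FinSet}_+) = \Z = \pi_0 S$. Since subtraction sequences $Z \hookrightarrow X \xleftarrow{\circ} X-Z$ map to cofiber sequences $Z(k)_+ \hookrightarrow X(k)_+ \to (X-Z)(k)_+$ (which is precisely the excision condition checked in the preceding proposition, using the disjointness of the underlying point sets), this is compatible with the defining relations of $K_0(\mbf{Var}_{/k})$.

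The only real substance lies in the already-established Proposition \ref{pseudoexact}, which turns a W-exact functor into a map of spectra (not merely of $K$-theory spaces); one should check that the construction there respects the iterated $\widetilde{S}^Q_\bullet$ construction from Definition \ref{K_symmetric_spectrum}, so that the resulting map is actually a map of symmetric spectra and not just of underlying spaces. This is formal once one observes that the W-exactness conditions apply levelwise in every simplicial direction: the base change and excision axioms ensure that an object of $\widetilde{S}^Q_{n_1,\dots,n_k} \mbf{Var}_{/k}$ maps to a functor landing in $S^Q_{n_1,\dots,n_k} \mbf{FinSet}_+$, and the construction is manifestly $\Sigma_k$-equivariant in the permutation of the simplicial coordinates. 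The main (very mild) obstacle is therefore merely bookkeeping — checking that Proposition \ref{pseudoexact} actually upgrades from spaces to spectra — while everything of genuine geometric content has already been absorbed into the W-exactness of point counting.
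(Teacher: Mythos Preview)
Your proof is correct and follows exactly the paper's approach: apply Proposition~\ref{pseudoexact} to the W-exact point-counting functor to obtain $K(\mbf{Var}_{/k}) \to K(\mbf{FinSet}_+)$, then invoke Barratt--Priddy--Quillen. Your additional remarks on the $\pi_0$ computation and the spectrum-level bookkeeping go slightly beyond what the paper spells out, but they are consistent with it.
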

\begin{proof}
By the previous proposition, we have a $W$-exact functor $\mbf{Var}_{/k} \to \mbf{FinSet}_+$. By Thm. \ref{pseudoexact} this induces a map of spectra $K(\mbf{Var}_{/k}) \to K(\mbf{FinSet}_+)$.  Barrat-Priddy-Quillen finishes the proof. 
\end{proof}

\begin{rmk}
This too is a map of $E_\infty$-ring spectra. 
\end{rmk}

\begin{prop}\label{splitting}
The composition of the point-cointing map with the unit map is the identity, thus the sphere spectrum splits off of $K(\mbf{Var}_{/k})$ and we may write $K(\mbf{Var}_{/k}) \simeq S \vee \widetilde{K}(\mbf{Var}_{/k})$. 
\end{prop}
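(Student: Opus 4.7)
The plan is to verify that the composite $c \circ u \colon S \to K(\mbf{Var}_{/k}) \to S$ of the point-counting map and the unit map is the identity by checking this at the categorical level that produces the two maps, and then invoke the standard retract argument in the stable homotopy category. Concretely, the unit map $u$ is induced by the op-W-exact pair $(G_\ast, G^\ast)\colon \mbf{FinSet}_+ \to \mbf{Var}_{/k}$ and $c$ is induced by the W-exact pair $(-(k)_!, -(k)^!)\colon \mbf{Var}_{/k} \to \mbf{FinSet}_+$. Both are built from the simplicial-set-level maps constructed in Proposition \ref{pseudoexact} and Theorem \ref{op_W_exact}, so it suffices to produce a natural isomorphism from the composite $(F \circ G)$ of the underlying pairs to the identity endofunctor of $\mbf{FinSet}_+$, compatibly with cofibrations and fibrations.

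The natural isomorphism is the obvious one: for $[\mbf{n}]_+$, the $k$-points of $G([\mbf{n}]_+) = \coprod_{i=1}^{n} \Spec(k)$ are the $n$ canonical sections, so adding a disjoint basepoint yields $G([\mbf{n}]_+)(k)_+ \cong [\mbf{n}]_+$. Call this identification $\eta_{[\mbf{n}]_+}$. Then I would verify naturality in two pieces:
\begin{enumerate}
\item For a cofibration $i\colon [\mbf{n}_1]_+ \hookrightarrow [\mbf{n}_2]_+$, the map $G_\ast(i)$ is the obvious closed inclusion of summands, and $(G_\ast(i))(k)_!$ is the induced injection on $k$-points, which under $\eta$ is exactly $i$.
\item For a fibration $p\colon [\mbf{n}_1]_+ \twoheadrightarrow [\mbf{n}_2]_+$, the map $G^\ast(p) = G_\ast(p^\ast)$ is the summand inclusion along $p^\ast$, viewed as an open immersion in the subtraction sequence (valid because inclusions of connected components are simultaneously open and closed). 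Applying $-(k)^!$ to this open immersion gives the restriction map that sends a point lying in the subvariety to itself and a point outside it to the basepoint. Under $\eta$ this is precisely $p$.
\end{enumerate}
These two checks together say that the composite functors $F_! \circ G_\ast$ and $F^! \circ G^\ast$ are both naturally the identity, and the base-change/excision compatibilities needed for the composite to be either W-exact or to induce a simplicial-set map are formal consequences of those for $F$ and $G$ separately.

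A natural isomorphism of functors that preserves the combinatorial data used to build the $S_\bullet$-construction (objects, cofibrations, fibrations, and cofiber sequences) induces a homotopy between the associated maps of bisimplicial sets $w_\bullet S_\bullet (\mbf{FinSet}_+)$. Passing to $K$-theory and then to spectra, we conclude $c \circ u \simeq \id_S$. The main subtlety is the bookkeeping in step (2): one must check that the chosen open-immersion structure on the inclusion $G_\ast(p^\ast)$ that $G^\ast$ yields really does match what $-(k)^!$ expects, which is where the fact that finite disjoint unions of copies of $\Spec(k)$ have trivially decomposable topology makes everything work; if one tried to run the same argument for a category where open and closed immersions genuinely differ, the composition would not simplify as cleanly.

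Finally, since $c \circ u \simeq \id_S$ in the stable homotopy category, $u$ is a split monomorphism with retraction $c$. Standard triangulated-category arguments then produce an equivalence $K(\mbf{Var}_{/k}) \simeq S \vee \widetilde{K}(\mbf{Var}_{/k})$ where $\widetilde{K}(\mbf{Var}_{/k})$ can be taken to be the homotopy fiber of $c$ (or equivalently the cofiber of $u$). This is the desired splitting.
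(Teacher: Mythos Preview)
Your proposal is correct and follows essentially the same approach as the paper: both argue that the composite of the op-$W$-exact functor $(G_\ast,G^\ast)$ with the $W$-exact functor $(-(k)_!,-(k)^!)$ is the identity on $\mbf{FinSet}_+$, and then invoke Proposition~\ref{pseudoexact} and Theorem~\ref{op_W_exact} to pass to spectra. The paper's proof is in fact terser than yours---it simply asserts ``it is easy to see that this is the identity''---whereas you carefully spell out the identification on objects, cofibrations, and fibrations, and make explicit the standard retract argument in the stable homotopy category; your added detail (especially the check in step~(2) that $-(k)^!$ applied to the open immersion $G_\ast(p^\ast)$ recovers $p$) is a welcome expansion of what the paper leaves implicit.
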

\begin{proof}
  We consider the compostion of W-exact and op-W-exact functors
  \[
  \mbf{FinSet}_+ \xrightarrow{(G_\ast, G^\ast)} \mbf{Var}_{/k} \xrightarrow{(-(k)_!, -(k)^!)} \mbf{FinSet}_+.
  \]
  It is easy to see that this is the identity. The first map is op-W-exact and the second is $W$-exact. Thus, by Thm. \ref{pseudoexact} and Thm. \ref{op_W_exact} we obtain
  \[
  S \to K(\mbf{Var}_{/k}) \to S. 
  \]
\end{proof}

\subsection{Map to Waldhausen A-Theory}

Throughout this subsection we work over the base field $\C$. In this case varieties may be considered as topological spaces. However, there is already a $K$-theory of topological spaces, namely, Waldhausen's $A$-theory \cite[p.383]{waldhausen}. We produce a map $K(\mbf{Var}_{/\C}) \to A(\ast)$ relating these two $K$-theories. 

First, we recall the definition of Waldhausen's $A(\ast)$.

\begin{defn}\cite[p.379]{waldhausen}
Let $\mc{R}^{hf}_{/\ast}$ be the Waldhausen category of homotopy finite retractive spaces. These are spaces homotopy equivalent to a finite complex, equipped with cofibrations given by the homotopy extension property and weak equivalences the usual weak equivalences. 
\end{defn}

\begin{defn}
  The \textbf{Waldhausen $A$-theory} of a point is
  \[
  A(\ast) = \Omega |w S_\bullet R^{hf}_{/\ast}|
  \]
\end{defn}

In order to produce a map from $K(\mbf{Var}_{/\C})$ to $A(\ast)$, We need to produce a W-exact map $\mbf{Var}_{/\C} \to \mc{R}^{hf}_{/\ast}$. First, there is a forgetful functor $\mbf{Var}_{/\C} \to \mbf{Top}$ given by considering the smooth variety as a topological space.

The following result is folklore \cite{cisinski}

\begin{prop}\label{sep_fin_type}
Consider $X$ a separated, finite-type, complex scheme. If we consider it as a topological space and consider the one point compactification $X^+$, then $X^+$ is homotopy equivalent to a finite CW-complex. 
\end{prop}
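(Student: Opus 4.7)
The plan is to reduce the proposition to a statement about proper varieties, where triangulation techniques apply, by means of a compactification.

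First, I would invoke Nagata's compactification theorem to produce a proper $\C$-variety $\bar{X}$ together with an open immersion $X \hookrightarrow \bar{X}$ whose complement $Z = \bar{X} - X$ is a closed subvariety; this is available since $X$ is separated and of finite type over $\Spec \C$. Considering everything as a complex-analytic space, $\bar{X}$ is compact (because properness over $\Spec \C$ becomes compactness in the analytic topology), and $Z \subset \bar{X}$ is a closed subspace. Since $X = \bar{X} - Z$ sits as an open subspace of the compact Hausdorff space $\bar{X}$ with closed complement $Z$, there is a canonical homeomorphism $X^+ \cong \bar{X}/Z$.

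It therefore suffices to show that the pair $(\bar{X}, Z)$ has the homotopy type of a finite CW pair, since the quotient of such a pair is a finite CW complex. For this I would appeal to the triangulation results of Lojasiewicz and Hironaka: any compact real- or complex-analytic space (more generally, any compact subanalytic set) admits a finite triangulation as a simplicial complex, and given a finite family of closed subanalytic subspaces one can arrange the triangulation so that each becomes a subcomplex. Applying this to the pair $(\bar{X}, Z)$ in the analytic category yields a finite simplicial structure on $\bar{X}$ in which $Z$ is a subcomplex, so $\bar{X}/Z$ is manifestly a finite CW complex, as required.

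The main obstacle is really the verification of the triangulation statement rather than the reduction: $\bar{X}$ is typically singular (indeed $\bar{X}$ need not be smooth even when $X$ is), so one cannot reduce to the manifold-with-boundary case, and one must cite the subanalytic triangulation theorem in its general form. Once this is invoked, both the identification $X^+ \cong \bar{X}/Z$ and the conclusion are essentially formal. An alternative, more hands-on, route would be to induct on $\dim X$: choose $\bar{X}$ and $Z$ as above, resolve singularities to reduce to the case where $(\bar{X}, Z)$ is smooth with normal crossings (possible for smooth $X$), and then assemble $X^+$ out of the strata by repeated cofiber constructions — but this route requires additional smoothness hypotheses and does not obviously cover the singular case without reverting to subanalytic triangulation anyway.
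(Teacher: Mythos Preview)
Your argument is correct and is, in fact, the standard way one would justify this statement. The paper itself does not give a proof: it simply declares the result ``folklore'' and attributes it to a personal communication with Cisinski, so there is no proof in the paper to compare against. Your route via Nagata compactification, the identification $X^+ \cong \bar{X}/Z$, and the Lojasiewicz--Hironaka triangulation of the compact analytic pair $(\bar{X},Z)$ is exactly the sort of argument one gives when asked to unpack this folklore; the only subtlety worth flagging is that $\bar{X}$ need not embed globally in some $\mathbf{P}^n$, so one should cite the triangulation theorem in the form that applies to compact complex-analytic (or subanalytic) spaces rather than to projective varieties specifically.
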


\begin{prop}
The one-point compactification functor $((-)^+_!, (-)^{+,!}):  \mbf{Var}_{/\C} \to \mc{R}^{hf}_{/\ast}$ is W-exact. 
\end{prop}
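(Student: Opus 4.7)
The plan is to unpack each clause of Definition \ref{w_exact} in turn, using standard facts about the topology of complex algebraic varieties together with Proposition \ref{sep_fin_type}. Define $(-)^+_!$ on a closed immersion $i: Z \hookrightarrow X$ to be the pointed map $i^+: Z^+ \to X^+$ which sends $z \in Z$ to $i(z)$ and the added point at infinity to the added point at infinity. Define $(-)^{+,!}$ on an open immersion $j: U \hookrightarrow X$ to be the collapse map $j^+: X^+ \to U^+$ which sends $u \in U$ to itself and collapses $X - U$ together with the added point at infinity to the basepoint. Both assignments agree on objects, giving $X^+$, and by Proposition \ref{sep_fin_type} this object lies in $\mc{R}^{hf}_{/\ast}$.

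The first step would be functoriality. For closed immersions this follows from the fact that one-point compactification is functorial on proper maps (and closed immersions between finite-type separated schemes induce proper maps of underlying spaces). For open immersions, given $V \hookrightarrow U \hookrightarrow X$, the composite collapse $X^+ \to V^+$ agrees on the nose with the collapse followed by the collapse. We also need $i^+$ to be a cofibration in $\mc{R}^{hf}_{/\ast}$: this uses the well-known fact that the underlying space of a complex variety admits a triangulation for which any closed subvariety is a subcomplex (see, e.g., Hironaka or Lojasiewicz), so the inclusion $Z^+ \hookrightarrow X^+$ is a CW pair and in particular satisfies HEP.

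For base change, consider the cartesian square in $\mbf{Var}_{/\C}$ displayed in Definition \ref{w_exact}. One traces a point $z \in Z$ through the two composites $Z^+ \to Y^+$: under $i_! \circ j^!$ the point $z$ survives to $Y^+$ iff $z$ lies in $W \subset Z$, in which case it is sent to $i(z) \in Y$; under $(j')^! \circ (i')_!$ the point survives iff $i'(z) \in Y \subset X$, in which case it maps to $(j')^{-1}(i'(z))$. The cartesian condition $W = Y \times_X Z$ forces these two conditions and two target points to coincide, and points not satisfying either condition are sent to the basepoint by both composites. Excision is the classical identification $X^+ / Z^+ \cong (X - Z)^+$ for a closed immersion $Z \hookrightarrow X$: since $Z^+ \hookrightarrow X^+$ is a cofibration by the triangulability argument above, and the collapse $X^+ \to (X - Z)^+$ realizes the set-theoretic quotient, the sequence $Z^+ \to X^+ \to (X - Z)^+$ is a cofiber sequence in $\mc{R}^{hf}_{/\ast}$.

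The main obstacle, and the only point that is not essentially formal, is verifying that $i^+: Z^+ \hookrightarrow X^+$ is a genuine cofibration in the Waldhausen category $\mc{R}^{hf}_{/\ast}$; everything else reduces to elementary pointed set-theoretic bookkeeping once this is in hand. This obstacle is overcome by invoking the triangulability of real (and hence complex) algebraic varieties in a way compatible with closed subvarieties, which is a standard input and already implicit in Proposition \ref{sep_fin_type}.
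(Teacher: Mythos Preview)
Your proposal is correct and follows the same approach as the paper's proof, which is extremely terse (essentially just the observation that one-point compactification is covariant for proper maps and contravariant for open inclusions, with the base-change square commuting). You supply considerably more detail than the paper does---in particular, the verification via triangulability that $i^+: Z^+ \hookrightarrow X^+$ is an honest Waldhausen cofibration, and the explicit point-tracing for base change and excision---all of which the paper leaves implicit; one small slip is your phrase ``$z$ lies in $W \subset Z$'' in the base-change check, where you presumably mean ``$z$ lies in $X \subset Z$'' (since $Z \hookrightarrow W$, not the reverse), but the underlying argument is sound.
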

\begin{proof}
One point compactification is covariant with respect to proper maps between topological spaces and contravariant with respect to open inclusions. The necessary diagrams obviously commute. 
\end{proof}

We thus obtain 

\begin{thm}
  There is a map of spectra
  \[
  K(\mbf{Var}_{/\C}) \to A(\ast)
  \]
\end{thm}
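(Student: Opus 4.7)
The plan is to combine the previous proposition with the general machinery of W-exact functors developed earlier. The preceding proposition establishes that the one-point compactification $((-)^+_!, (-)^{+,!})$ defines a W-exact functor from $\mbf{Var}_{/\C}$ to $\mc{R}^{hf}_{/\ast}$; the only content left to check is that the output of the $K$-theory machine on a Waldhausen category agrees with the target $A(\ast)$, and this is essentially by definition.

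First, I would invoke Proposition \ref{pseudoexact}, which promotes any W-exact functor from an SW-category $\mc{C}$ to a Waldhausen category $\mc{W}$ into a simplicial map $i \widetilde{S}_\bullet \mc{C} \to w S_\bullet \mc{W}$, and hence to a map of $K$-theory spectra $K(\mc{C}) \to K(\mc{W})$. Applied to the present W-exact functor, this yields a map of spectra
\[
K(\mbf{Var}_{/\C}) \longrightarrow K(\mc{R}^{hf}_{/\ast}).
\]
Next, I would observe that by the definition of Waldhausen $A$-theory of a point, $K(\mc{R}^{hf}_{/\ast}) = A(\ast)$ (this is exactly the definition recalled just above). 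Composing these gives the desired map $K(\mbf{Var}_{/\C}) \to A(\ast)$.

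There is essentially no obstacle here, since the real work has already been carried out upstream: Proposition \ref{sep_fin_type} guarantees that $X^+$ lives in $\mc{R}^{hf}_{/\ast}$ for separated finite-type complex schemes, the preceding proposition verifies the base change and excision axioms from Definition \ref{w_exact} for the one-point compactification pair, and Proposition \ref{pseudoexact} packages the passage from W-exact functors to spectrum maps. The only minor point worth remarking on is that the use of the iterated construction from Definition \ref{iterated_S_dot_construction} is automatic: a W-exact functor applied componentwise to a multi-dimensional flag still sends subtraction sequences in each simplicial direction to cofiber sequences and cartesian squares to cocartesian ones, so the induced maps assemble into a map of symmetric spectra as in Definition \ref{K_symmetric_spectrum}, completing the construction.
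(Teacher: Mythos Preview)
Your proposal is correct and matches the paper's approach exactly: the paper states this theorem without an explicit proof, treating it as an immediate consequence of the preceding proposition (W-exactness of one-point compactification) together with Proposition~\ref{pseudoexact} and the definition of $A(\ast)$. You have simply spelled out the details the paper leaves implicit.
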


\begin{rmk}
The homotopy groups and homotopy type of $A(\ast)$ have recently been computed \cite{blumberg_mandell_A_point,blumberg_mandell_A_point_II}. It would be very interesting to know what parts of this are picked up by $K(\mbf{Var}_{/\C})$. 
\end{rmk}

\begin{rmk}
  Using trace methods there is a map $A(\ast) \to S$, and thus a composition
  \[
  K(\mbf{Var}_{/\C}) \to A(\ast) \to S. 
  \]
  This is likely the analgoue of point-counting or the Euler characteristic.
\end{rmk}

\begin{rmk}
The reliance on Prop. \ref{sep_fin_type} is somewhat unsatisfactory. However, there are much cleaner, more ``motivic'',  ways of producing this map, as suggested to the author by Denis-Charles Cisinski \cite{cisinski}. We will pursue these in future work. 
\end{rmk}

Granted the above map, we can also obtain a map to any $K(R)$ for $R$ a ring or ring spectrum. The $A$-theory of a point is equivalent to the spectrum $K(S)$. There is a functor $\mbf{Var}_{/\C}$ to spectra (i.e. $S$-modules) specified by $X \mapsto \Sigma^\infty X(\C)_+$. By smashing with any ring spectrum $R$ we obtain a functor $\mbf{Var}_{/\C} \to \operatorname{Mod}_{R}$. In the case when $R$ is an Eilenberg-MacLane spectrum $HA$, this is equivalent to considering the compactly-supported cohomology of $X$ with coefficients in $R$.

\section{Conjectures and Future Work}

This paper has set up a model for investigating $K(\mbf{Var}_{/k})$. There are of course further points to investigate. Not only are there many more derived motivic measures, but one may wonder about the relationship with other aspects of $K_0 (\mbf{Var}_{/k})$, for example, whether motivic integration could be lifted.

Let us briefly discuss a conjectural motivic measure. When looking for a motivic measure, we of course have to produce W-exact functors, and thus need functors with certain specific variance properties. We consider one such functor presently. 

Let $X$ be a Noetherian scheme. Quillen defines $K'(X)$ to be $K(\mbf{Coh}(X))$, that is he defines it to be the $K$-theory of the abelian category of coherent sheaves on $X$ \cite{quillen}. He also proves the following proposition

\begin{prop}\cite[3.1]{quillen}
  Let $X \hookrightarrow Y$ be a closed immersion. Then there is a cofibration sequence of spectra
  \[
  K'(X) \to K'(Y) \to K'(Y-X)
  \]
\end{prop}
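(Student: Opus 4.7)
The plan is to deduce this from Quillen's localization theorem for abelian categories, which asserts that a Serre subcategory $\mathcal{A} \subset \mathcal{B}$ gives rise to a fibration sequence $K(\mathcal{A}) \to K(\mathcal{B}) \to K(\mathcal{B}/\mathcal{A})$ on $K$-theory spectra. We apply this with $\mathcal{B} = \mathbf{Coh}(Y)$ and $\mathcal{A} = \mathbf{Coh}_X(Y)$, the full subcategory of coherent sheaves on $Y$ whose set-theoretic support is contained in $X$. The steps below are then: verify that $\mathbf{Coh}_X(Y)$ is Serre, identify its $K$-theory with $K'(X)$, identify the Serre quotient with $\mathbf{Coh}(Y-X)$, and invoke localization.

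First, I would check that $\mathbf{Coh}_X(Y)$ is a Serre subcategory, i.e.\ closed under subobjects, quotients, and extensions. This is immediate from the fact that the support of a coherent sheaf $\mathcal{F}$ is the vanishing locus of the annihilator ideal, so support is preserved by subobjects and quotients, and the support of an extension is the union of the supports of its ends.

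Next, I would show $K(\mathbf{Coh}_X(Y)) \simeq K'(X)$. The natural candidate functor is pushforward $i_\ast\colon \mathbf{Coh}(X) \to \mathbf{Coh}_X(Y)$ along the closed immersion $i\colon X \hookrightarrow Y$; this is exact and fully faithful, but its image consists only of those sheaves killed by the ideal sheaf $\mathcal{I}_X$, not all sheaves with support in $X$. The way around this is Quillen's devissage theorem: any $\mathcal{F} \in \mathbf{Coh}_X(Y)$ admits a finite filtration $0 = \mathcal{F}_0 \subset \mathcal{F}_1 \subset \cdots \subset \mathcal{F}_n = \mathcal{F}$ with successive quotients annihilated by $\mathcal{I}_X$ (since $\mathcal{I}_X$ acts nilpotently on any coherent sheaf with support in the closed subscheme $X$, by Noetherianness). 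Devissage then yields $K(\mathbf{Coh}_X(Y)) \simeq K(\mathbf{Coh}(X)) = K'(X)$.

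Third, I would identify the Serre quotient $\mathbf{Coh}(Y)/\mathbf{Coh}_X(Y)$ with $\mathbf{Coh}(Y-X)$. The restriction functor $j^\ast\colon \mathbf{Coh}(Y) \to \mathbf{Coh}(Y-X)$ along the open immersion $j\colon Y - X \to Y$ is exact and its kernel is precisely $\mathbf{Coh}_X(Y)$, so it factors through the Serre quotient. To show this induced functor is an equivalence, one uses that any coherent sheaf on the open $Y-X$ extends to a coherent sheaf on $Y$ (a standard consequence of Noetherianness of $Y$), and that morphisms in the quotient can be identified with morphisms of restrictions on $Y - X$. Combining these three steps with Quillen's localization theorem produces the required cofiber sequence $K'(X) \to K'(Y) \to K'(Y-X)$.

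The main obstacle is really the devissage step: one must be careful that ``support in $X$'' is interpreted set-theoretically and that the Noetherian hypothesis is genuinely used to produce the nilpotent filtration. The other two identifications are essentially formal once this is in hand, and the rest is a direct invocation of Quillen's theorem.
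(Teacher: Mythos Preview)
Your argument is correct and is precisely Quillen's original proof via d\'evissage and the localization theorem for abelian categories. The paper does not supply its own proof of this proposition; it simply cites Quillen, so your sketch in fact recovers the referenced argument.
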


This means that $K'(-)$ is exactly the sort of functor that we need. It is covariant with respect to closed inclusions, and contravariant with respect to open inclusions. It thus gives us a W-exact functor $K': \mbf{Var}_{/k} \to \mbf{Sp}$ where the latter is the category of spectra considered as a Waldhausen category via its model structure.  Furthermore, every $K$-theory spectrum $K'(X)$ is a $K(S)$-module. Thus the $K'$ functor is actually an exact functor
\[
K': \mbf{Var}_{/k} \to \mbf{Mod}_{K(S)}
\]
where the latter denotes the modules over the $E_\infty$-ring $K(S)$. We would like this to produce a map on $K$-theory. However, by the Eilenberg swindle, the $K$-theory of $\mbf{Mod}_{K(S)}$ vanishes. In order to get a map $K(\mbf{Var}_{/k}) \to K(K(S))$ we require that $K'$ land in \textit{compact} (or perhaps dualizable) $K(S)$-modules. To put this more succinctly, we have two conjectures, the former implied by the latter.

\begin{conj}
  There is a map of ring spectra
  \[
  K(\mbf{Var}_{/k}) \to K(K(S))
  \]
\end{conj}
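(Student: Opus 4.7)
The plan is to construct the map by promoting Quillen's $K'$-functor $K'(-) = K(\mbf{Coh}(-))$ to a W-exact functor (in the sense of Defn. \ref{w_exact}) from $\mbf{Var}_{/k}$ to an appropriate Waldhausen model for perfect $K(S)$-modules. The pair $((K')_!, (K')^!)$ would have $(K')_!$ given by pushforward along closed immersions (exact on coherent sheaves, hence inducing a map on $K$-theory) and $(K')^!$ given by pullback along open immersions (exact because open immersions are flat). The base-change axiom of a W-exact functor then reduces to flat base change for coherent sheaves applied to a Cartesian square consisting of a closed immersion and the complementary open immersion. The excision/cofiber axiom is exactly Quillen's localization sequence cited in the excerpt.

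The target category requires more care: landing $K'$ in $\mbf{Mod}_{K(S)}$ gives a formally W-exact functor, but $K$-theory of $\mbf{Mod}_{K(S)}$ vanishes by the Eilenberg swindle, so I must factor through the subcategory $\mbf{Perf}_{K(S)}$ of compact modules. This is the main obstacle: one must prove that for every separated finite-type $k$-scheme $X$, the spectrum $K'(X)$ lies in the thick subcategory of $\mbf{Mod}_{K(S)}$ generated by $K(S)$. For smooth $X$, Quillen's resolution theorem identifies $K'(X)$ with $K(X)$, and compactness follows from finite global dimension together with finite resolutions by vector bundles. For general $X$, I would attempt d\'{e}vissage along a stratification by smooth locally closed subvarieties: by Noetherian induction on closed subschemes (using generic smoothness, or resolution of singularities in characteristic zero to reduce to the smooth case), the excision cofiber sequence preserves the thick subcategory and allows assembly of $K'(X)$ from perfect pieces.

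Once the W-exact functor $K': \mbf{Var}_{/k} \to \mbf{Perf}_{K(S)}$ is in hand, Proposition \ref{pseudoexact} immediately produces the spectrum map
\[
K(\mbf{Var}_{/k}) \to K(\mbf{Perf}_{K(S)}) = K(K(S)).
\]
To upgrade this to a map of $E_\infty$-ring spectra, I would use that the Cartesian product of varieties should map to the relative tensor product over $K(S)$. Concretely, the external product on coherent sheaves $\boxtimes \colon \mbf{Coh}(X) \times \mbf{Coh}(Y) \to \mbf{Coh}(X \times_k Y)$ is exact in each variable, and induces after $K$-theory a $K(S)$-bilinear pairing $K'(X) \wedge K'(Y) \to K'(X \times_k Y)$. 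A Künneth-type argument is needed to show this descends to a $K(S)$-linear equivalence $K'(X) \otimes_{K(S)} K'(Y) \xrightarrow{\sim} K'(X \times_k Y)$, at least up to a biexact structure on the underlying permutative SW-categories (Defn. \ref{bi_exact}). Granted this, the multiplicative structure on $K(K(S))$ coming from tensor product on $\mbf{Perf}_{K(S)}$ matches the one induced by the product on $\mbf{Var}_{/k}$, yielding the desired ring map.

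The two main obstacles are thus: (i) proving compactness of $K'(X)$ over $K(S)$ for all finite-type separated $X$, which is a nontrivial finiteness result potentially requiring resolution of singularities or alterations, and (ii) establishing enough Künneth-type behavior of $K'$ under Cartesian product of varieties to witness the pairing at the level of permutative categories. The first obstacle is by far the more serious one, as even formulating the correct compactness statement intrinsically (rather than via smooth models) appears to require substantial new input from derived algebraic geometry.
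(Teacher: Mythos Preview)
The statement you are addressing is a \emph{conjecture} in the paper, not a theorem; there is no proof in the paper to compare against. Your outline is precisely the heuristic the paper itself offers as motivation: use Quillen's $K'$ together with the localization sequence to get a W-exact functor into $K(S)$-modules, then confront the Eilenberg swindle by asking that $K'(X)$ be compact (or dualizable) over $K(S)$. You have correctly isolated the main obstruction --- compactness of $K'(X)$ as a $K(S)$-module --- which the paper records separately as its own conjecture immediately following this one.

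Where you go beyond the paper is in sketching a d\'{e}vissage/stratification argument for compactness and in outlining the K\"unneth-type input needed for the ring structure. Be aware that neither of these is straightforward: compactness of $K'(X)$ over $K(S)$ is not known even for $X = \Spec k$ (this would say $K(k)$ is a perfect $K(S)$-module, which is open and likely false in general), so the induction you propose has no base case. Your proposal is a faithful elaboration of the paper's strategy, but it remains a strategy, not a proof.
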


\begin{conj}
Let $X$ be a smooth scheme. Then $K(X)$ is compact or dualizable as a $K(S)$-module.   
\end{conj}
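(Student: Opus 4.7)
The plan is to reduce to the smooth projective case via dg-categorical dualizability, then handle general smooth $X$ through localization sequences. First, for $X$ smooth projective over $k$, the stable $\infty$-category $\operatorname{Perf}(X)$ is both smooth (it admits a compact strong generator by Bondal--Van den Bergh, whose endomorphism algebra inherits smoothness from $X$) and proper ($\operatorname{Ext}$-groups between perfect complexes on a proper scheme are finite-dimensional over $k$). Hence $\operatorname{Perf}(X)$ is dualizable in $\mc{C}\mathrm{at}^{Ex}_\infty$ under the Lurie tensor product. The nonconnective $K$-theory functor into $K(S)$-modules is symmetric monoidal and preserves duals by \cite{blumberg_gepner_tabuada}, so $K(X)$ is dualizable, hence compact, as a $K(S)$-module.

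For a general smooth $X$, I would work in characteristic zero and invoke Hironaka's resolution to choose an open immersion $X \hookrightarrow \bar{X}$ with $\bar{X}$ smooth projective and $D = \bar{X} \setminus X$ a simple normal crossings divisor. Thomason--Trobaugh localization yields a cofiber sequence of $K(S)$-modules
\[
K'(D) \longrightarrow K(\bar{X}) \longrightarrow K(X).
\]
Stratifying $D$ by its smooth locally closed strata and iterating the localization sequence realizes $K'(D)$ as a finite iterated extension of $K$-theories of smooth projective varieties, each compact by the first step. Since compact $K(S)$-modules form a thick subcategory closed under cofibers, this gives compactness of $K(X)$.

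The main obstacle is twofold. First, the preservation of duals by nonconnective $K$-theory, while expected and essentially contained in \cite{blumberg_gepner_tabuada}, requires care to set up with the right variants (nonconnective versus connective) and the correct symmetric monoidal structure on $\mc{C}\mathrm{at}^{Ex}_\infty$. Second, and more seriously, the localization-based induction gives only compactness, not full dualizability, for non-proper smooth $X$: cofibers of dualizable $K(S)$-modules are generally only compact. This matches the ``or'' in the conjecture's statement and suggests that full dualizability may genuinely fail outside the proper case. Finally, extending beyond characteristic zero would require de Jong alterations in place of Hironaka, which only yields the conclusion after inverting the residue characteristic, so the conjecture likely needs qualification in that setting.
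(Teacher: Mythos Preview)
The paper does not prove this statement: it is explicitly labeled a \emph{Conjecture} and appears in the section ``Conjectures and Future Work'' with no proof or proof sketch. So there is no paper proof to compare against; you are attempting to settle an open question the author poses.

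As for the substance of your sketch, the most serious gap is the assertion that nonconnective $K$-theory, viewed as a functor from $\mc{C}\mathrm{at}^{Ex}_\infty$ to $K(S)$-modules, is (strong) symmetric monoidal and therefore preserves dualizable objects. What \cite{blumberg_gepner_tabuada} actually gives is that $K$-theory is \emph{lax} symmetric monoidal; lax symmetric monoidal functors do not in general send dualizable objects to dualizable objects. The dualizability of $\operatorname{Perf}(X)$ for $X$ smooth and proper lives in noncommutative motives, and transferring that to a dualizability statement in $\operatorname{Mod}_{K(S)}$ requires an argument you have not supplied. This is likely exactly why the author records the statement as a conjecture rather than a theorem.

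Your localization-and-stratification reduction to the smooth projective case is reasonable once the base case is known, and you are right that it yields compactness rather than dualizability in general. But without the base case the induction does not get started. The characteristic-$p$ caveat via alterations is also well taken, but again presupposes a result that is not yet in hand.
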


\begin{rmk}
  When $X$ is a $k$-variety, $K'(X)$ is also a $K(k)$-module. It is also possible that $K'(X)$ could be compact as a $K(k)$-module, in which case we would have a map
  \[
  K(\mbf{Var}_{/k}) \to K(K(k))
  \]
\end{rmk}

\section{Appendix: Simplicial Homotopy}

In this appendix we present a few diagrams to aid in intuition with the simplicial homotopy produced in the additivity theorem. The simplex $h_3(e)$ where $e$ is a 5-simplex looks like

\[
\xymatrix@C=.5cm@R=.5cm{
  A_0 \ar@{^{(}->}[r]& A_1 \ar@{^{(}->}[r] & A_2 \ar@{^{(}->}[r] & A_3 \ar@{^{(}->}[r] & S_0 \ar@{=}[r]& S_0 \ar@{=}[r] & S_0\\
  & A_{1, 1} \ar@{^{(}->}[r]\ar[u]^{\circ} & A_{1,2} \ar[u]^{\circ} \ar@{^{(}->}[r] & A_{1,3} \ar@{^{(}->}[r]  \ar[u]^{\circ} & S_0 - A_1 \ar@{=}[r] \ar[u]^{\circ} & S_0 - A_1 \ar@{=}[r] \ar[u]^{\circ} & S_0 - A_1  \ar[u]^{\circ} \\
  & & A_{2,2}  \ar[u]^{\circ} \ar@{^{(}->}[r]& A_{2,3}   \ar[u]^{\circ}\ar@{^{(}->}[r]& S_0 - A_2  \ar[u]^{\circ}\ar@{=}[r] & S_0 - A_2  \ar[u]^{\circ}\ar@{=}[r] & S_0 - A_2 \ar[u]^{\circ} \\
  & & & A_{3,3} \ar[u]^{\circ} \ar@{^{(}->}[r] & S_0 - A_3  \ar[u]^{\circ}\ar@{=}[r] & S_0 - A_3  \ar[u]^{\circ} \ar@{=}[r]& S_0 - A_3 \ar[u]^{\circ} & \\
  & & & &\emptyset & \emptyset & \emptyset \\
  & & & & & \emptyset& \emptyset \\
  & & & & &          & \emptyset
}
\]

The more important part of the simplicial homotopy is the $h^C_i(e)$ simplex. The ppicture below is of $h_3 (e)$ when $e$ is a 5-simplex. For compactness we write $S_{i,0} := S_0 - A_i$. 

\[
\xymatrix@C=.2cm @R=.5cm{
  C_0 \ar@{^{(}->}[r]& C_1 \ar@{^{(}->}[r] & C_2 \ar@{^{(}->}[r] & C_3 \ar@{^{(}->}[r] & C_3 \amalg_{A_3} S_0 \ar@{^{(}->}[r] & C_4 \amalg_{A_4} S_0 \ar@{^{(}->}[r] & C_5 \amalg_{A_5} S_0\\
  & C_{1,1} \ar[u]^\circ \ar@{^{(}->}[r] & C_{1,2}  \ar[u]^\circ \ar@{^{(}->}[r] & C_{1,3} \ar[u]^\circ \ar@{^{(}->}[r] & C_{1,3} \amalg_{A_{1,3}} S_{1,0}  \ar[u]^\circ \ar@{^{(}->}[r] & C_{1,4} \amalg_{A_{1,4}} S_{1,0}  \ar[u]^\circ \ar@{^{(}->}[r] & C_{1,5} \amalg_{A_{1,5}} S_{1,0} \ar[u]^\circ\\
  & & C_{2,2} \ar[u]^\circ \ar@{^{(}->}[r] & C_{2,3}  \ar[u]^\circ \ar@{^{(}->}[r] & C_{2,3} \amalg_{A_{2,3}} S_{2,0}  \ar[u]^\circ \ar@{^{(}->}[r] & C_{2,4} \amalg_{A_{2,4}} S_{2,0}  \ar[u]^\circ \ar@{^{(}->}[r] & C_{2,5} \amalg_{A_{2,5}} S_{2,0} \ar[u]^\circ \\
  & & & C_{3,3}  \ar[u]^\circ \ar@{^{(}->}[r] & S_{3,0}  \ar[u]^\circ \ar@{^{(}->}[r] & C_{3,4} \amalg_{A_{3,4}} S_{3,0}  \ar[u]^\circ \ar@{^{(}->}[r] & C_{3,5} \amalg_{A_{3,5}} S_{3,0}  \ar[u]^\circ \\
  & & & &\emptyset & B_{3,4}  \ar[u]^\circ \ar@{^{(}->}[r] & B_{3,5} \ar[u]^\circ \\
    & & & & & B_{4,4} \ar[u]^\circ\ar@{^{(}->}[r] & B_{4,5} \ar[u]^\circ \\
  & & & & & & B_{5,5} \ar[u]^\circ \\
}
\]

For completeness, we include the $h_i(e)^B$ in this case as well.

\[
\xymatrix@C=.5cm@R=.5cm{
  B_0\ar@{^{(}->}[r]& B_1\ar@{^{(}->}[r] & B_2\ar@{^{(}->}[r] & B_3\ar@{=}[r] & B_3 \ar@{^{(}->}[r]& B_4\ar@{^{(}->}[r] & B_5\\
  & B_{1,1}\ar@{^{(}->}[r]\ar[u]^{\circ} & B_{1,2}\ar@{^{(}->}[r]\ar[u]^{\circ} & B_{1,3}\ar@{=}[r]\ar[u]^{\circ} & B_{1,3}\ar@{^{(}->}[r]\ar[u]^{\circ} & B_{1,4}\ar@{^{(}->}[r]\ar[u]^{\circ} & B_{1,5}\ar[u]^{\circ}\\
  & & B_{2,2}\ar@{^{(}->}[r]\ar[u]^{\circ} & B_{2,3}\ar@{=}[r]\ar[u]^{\circ} & B_{2,3}\ar@{^{(}->}[r]\ar[u]^{\circ} & B_{2,4}\ar@{^{(}->}[r]\ar[u]^{\circ} & B_{2,5}\ar[u]^{\circ}\\
  & & & B_{3,3}\ar@{=}[r]\ar[u]^{\circ} & B_{3,3} \ar@{^{(}->}[r]\ar[u]^{\circ}& B_{3,4} \ar@{^{(}->}[r]\ar[u]^{\circ}& B_{3,5}\ar[u]^{\circ}\\
  & & & & B_{3,3}\ar@{^{(}->}[r]\ar[u]^{\circ} & B_{3,4}\ar@{^{(}->}[r]\ar[u]^{\circ} & B_{3,5}\ar[u]^{\circ}\\
  & & & & &B_{4,4}\ar@{^{(}->}[r]\ar[u]^{\circ} & B_{4,5}\ar[u]^{\circ} \\
  & & & & & & B_{5,5}\ar[u]^{\circ}   
}
\]

\bibliographystyle{amsplain}
\bibliography{kvarbib}

\end{document}